\documentclass[11pt]{amsart}
\usepackage{amssymb,amsmath,amsthm}
\usepackage[margin=1.2in]{geometry}
\usepackage[colorlinks=true,linkcolor=blue,citecolor=blue,urlcolor=blue]{hyperref}

\newtheorem{theorem}{Theorem}[section]
\newtheorem{lemma}[theorem]{Lemma}
\newtheorem{proposition}[theorem]{Proposition}
\newtheorem{corollary}[theorem]{Corollary}
\theoremstyle{definition}
\newtheorem{definition}[theorem]{Definition}
\newtheorem{question}[theorem]{Question}
\theoremstyle{remark}
\newtheorem{remark}[theorem]{Remark}

\newcommand{\Cantor}{2^{\omega}}
\newcommand{\strings}{2^{<\omega}}
\newcommand{\ltt}{\le_{\mathrm{tt}}}
\newcommand{\ett}{\equiv_{\mathrm{tt}}}
\newcommand{\Dtt}{\mathcal{D}_{\mathrm{tt}}}
\newcommand{\cat}{{}^{\frown}}
\newcommand{\res}{\upharpoonright}
\newcommand{\dg}{\deg_{\mathrm{tt}}}
\newcommand{\Padd}{\Phi_{+}}
\newcommand{\Psub}{\Phi_{-}}
\newcommand{\Phalf}{\Phi_{1/2}}
\newcommand{\Radd}{\textup{(R$_{+}$)}}
\newcommand{\Rsub}{\textup{(R$_{-}$)}}
\newcommand{\Rhalf}{\textup{(R$_{1/2}$)}}

\title[Baire measurable functions and automorphisms of the truth-table degrees]{Baire measurable functions induce only the trivial automorphism of the truth-table degrees}

\author{Bj{\o}rn Kjos-Hanssen}

\begin{document}

\begin{abstract}
We show that if an automorphism $\pi$ of the truth-table degrees is induced by a
Baire measurable function $F\colon\Cantor\to\Cantor$, or merely induced on some
comeager set by a continuous function, then $\pi$ is the identity.
The same
results hold for the weak truth-table degrees.
The argument is an adaptation to
truth-table reducibility of OpenAI's recent proof that the
Turing degrees are rigid.
\end{abstract}

\maketitle

\section{Introduction}

Let $\Dtt$ denote the partial order of the truth-table degrees. An
\emph{automorphism} of $\Dtt$ is a bijection $\pi\colon\Dtt\to\Dtt$ such that
$\mathbf{x}\le\mathbf{y}\iff\pi(\mathbf{x})\le\pi(\mathbf{y})$ for all
$\mathbf{x},\mathbf{y}\in\Dtt$. Let $D\subseteq\Cantor$ and
$F\colon D\to\Cantor$. We say that $F$ \emph{induces $\pi$ on $D$} if
\[
  \dg(F(X))=\pi(\dg(X))\qquad\text{for all }X\in D ,
\]
and that $F$ \emph{induces} $\pi$ if this holds with $D=\Cantor$.

The truth-table degrees sit between two structures whose automorphism groups
are at opposite extremes. The many-one degrees have $2^{2^{\omega}}$
automorphisms (Shore; see the introduction of \cite{And}). The Turing degrees
have only the trivial one: this is the theorem recently announced in
\cite{OAI}, an exposition of which is given by Marks \cite{Mar}. The proof in
\cite{Mar} combines two ingredients. The first is the theorem of Slaman and
Woodin \cite{SW} that every automorphism of the Turing degrees is induced by an
arithmetic function. The second is a coding lemma \cite[Lemma~0.2]{Mar}, proved
with Cohen forcing and the group structure of the $2$-adic integers, which
shows that an automorphism induced by such a function is the identity.

For the truth-table degrees much less is known. Anderson \cite{And} showed that
every automorphism of $\Dtt$ is the identity on a cone; see also \cite{KH3}.
In this note we show that the second ingredient above adapts to truth-table
reducibility, in a strong form.

\begin{theorem}\label{thm:main}
  Let $\pi$ be an automorphism of $\Dtt$. Suppose that there are a comeager set
  $D\subseteq\Cantor$ and a continuous function $F\colon D\to\Cantor$ which
  induces $\pi$ on $D$. Then $\pi$ is the identity.
\end{theorem}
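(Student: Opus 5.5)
The plan has two stages. First, show that $\pi$ fixes the degree of every sufficiently Cohen generic real. Second, transfer this to an arbitrary degree using the $2$-adic group structure of $\Cantor$. Throughout, I identify $\Cantor$ with the $2$-adic integers. I use total (truth-table) functionals $\Padd$, $\Psub$, $\Phalf$ computing $X+Y$, $X-Y$ and $X/2$ (the last on the appropriate clopen set). Since these are tt-reductions, for all $X,Z$ we get
\[
  \dg(X+Z)\le\dg(X)\oplus\dg(Z)\quad\text{and}\quad \dg(Z)\le\dg(X)\oplus\dg(X+Z).
\]
Any automorphism $\pi$ preserves joins, because $\mathbf{x}\oplus\mathbf{y}$ is the least upper bound in $\Dtt$. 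Two further facts are used repeatedly. A comeager $D$ contains a dense $G_\delta$ on which $F$ is continuous. The translations $X\mapsto X+Z$ are homeomorphisms, so they preserve comeagerness.

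\textbf{Step A: $\pi(\dg X)=\dg X$ for all $X$ in some comeager set.} This is the adaptation of \cite[Lemma~0.2]{Mar}, and I expect it to be the main obstacle. Fix a parameter $c$ coding $F$ on its dense $G_\delta$, and let $X$ be Cohen generic over $c$. Then $F(X)\le_T X\oplus c$, and I will try to upgrade this and compare it with $X$ using the arithmetic relations. The relations $X=(X/2)+(X/2)$ and $X=(X+Y)-Y$, with $Y$ generic relative to $X$, give
\[
  \pi(\dg X)\le \pi(\dg(X/2))\qquad\text{and}\qquad \pi(\dg X)\le \pi(\dg(X+Y))\oplus\pi(\dg Y).
\]
All of the reals appearing here are again generic, so $F$ applies to each of them. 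From these I will argue, by a forcing argument over conditions $\sigma\in\strings$, that $F(X)\ett X$ on a comeager set. Suppose a condition forced $X\not\le_{\mathrm{tt}}F(X)$, or $F(X)\not\le_{\mathrm{tt}}X$. Translating by finite $2$-adic perturbations, and halving to move between cones, would then produce generic pairs whose images under $F$ violate the order relations above, which $\pi$ must preserve. The delicate point is making the reductions truth-table rather than merely Turing. For this I will exploit that the relevant reductions are fixed on a nonmeager set, via Baire category, and have bounded use on the generic.

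\textbf{Step B: $\pi$ is the identity.} Fix an arbitrary $Z$, and let $C$ be the comeager set from Step A. The set $C\cap(C-Z)$ is comeager, and for every $X$ in it both $X$ and $X+Z$ lie in $C$. Applying $\pi$ to $\dg Z\le\dg X\oplus\dg(X+Z)$ gives $\pi(\dg Z)\le \dg X\oplus\dg(X+Z)$. The same inequality holds for $\pi^{-1}$, since $\pi^{-1}$ also fixes the degrees of members of $C$. Now let $W$ be either $\pi(\dg Z)$ or $\pi^{-1}(\dg Z)$. Then $W\ltt X\oplus(X+Z)$ for comeager many $X$. There are only countably many tt-reductions, so by Baire category one fixed total reduction $\Phi$ works on a set comeager in some basic open set $[\sigma]$.

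To compute $W(n)$ from $Z$, search for an extension $\tau\supseteq\sigma$ of length at least the use of $\Phi$ at $n$. Since $\Phi$ is total with computable use, the bits of $X+Z$ below the use depend only on the corresponding initial segments of $X$ and $Z$. Hence $\Phi(\tau\oplus(\tau+Z\res|\tau|))(n)$ is defined, and it must equal $W(n)$ because $[\tau]$ contains $X$'s from the comeager set. This computation queries only a computably bounded initial segment of $Z$, so $W\ltt Z$. We therefore get both $\pi(\dg Z)\le\dg Z$ and $\pi^{-1}(\dg Z)\le\dg Z$. The latter gives $\dg Z\le\pi(\dg Z)$, so $\pi(\dg Z)=\dg Z$ for every $Z$.
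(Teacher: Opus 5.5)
Your Step~B is correct: given a comeager set $C$ on whose members $\pi$ fixes degrees, the argument through $Z\ltt X\oplus(X+Z)$ and a closed set of $X$ with nonempty interior is essentially the paper's Lemma~\ref{lem:remove}. The gap is Step~A. It carries the entire content of the theorem, and you give no mechanism for it.

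The relations you display, $\pi(\dg X)\le\pi(\dg(X/2))$ and $\pi(\dg X)\le\pi(\dg(X+Y))\vee\pi(\dg Y)$, are images under $\pi$ of true tt-relations. So they hold for \emph{every} automorphism, no configuration of generic reals can ``violate'' them, and no contradiction can be extracted from them. What the paper uses instead is uniformity of functional equations involving $F$. Sets of the form $\{x:\varphi(x)=\Phi(\psi(x))\}$ are closed, so Baire category gives single tt-functionals $\Padd,\Psub,\Phalf$ for which \Radd, \Rsub{} and \Rhalf{} hold on a comeager subset of a box $N_\alpha\times N_\beta$. Combine this with a splitting bit $j$ of $F$ at a position $l$ (Lemma~\ref{lem:split}), and set $X_m=0^l\cat S^m(X)$. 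This gives one fixed functional taking $F(G+X_m)$ to $F(G+X_{m+1})$. A bounded test over the finitely many translates $\kappa_c$ reads off $X(m)$. Then a translation by $-2^l$ (if $X(m)=1$), followed by a halving, moves to the next position. Iterating this functional decodes $X$ from $A\oplus F(G)\oplus F(H)$ by a truth-table reduction (Proposition~\ref{prop:coding}), after which $G,H$ are removed. Your closing remark about fixing reductions on a nonmeager set points toward the first ingredient. But two things are absent: the coding of $X$ into $F$-values at $2$-adic translates of a generic, and the bounded decoding that keeps everything truth-table.

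Second, even a complete coding lemma yields only one inequality, $\mathbf{x}\le\pi(\mathbf{x})$ for all $\mathbf{x}$ (so $X\ltt F(X)$). In a general partial order this does not force $\pi$ to be the identity. Your Step~A asserts $F(X)\ett X$. The direction $F(X)\ltt X$ requires running the coding argument for $\pi^{-1}$, hence a continuous function inducing $\pi^{-1}$ on a comeager set. The hypothesis provides no such function (in \cite{Mar} it comes from Slaman--Woodin), and your plan never addresses this.

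The paper obtains one in Proposition~\ref{prop:inverseD}. By $\pi^{-1}(\mathbf z)\le\mathbf z$, every $Z$ lies in one of the countably many sets $B_\Phi=\{Z:\dg(\Phi(Z))=\pi^{-1}(\dg(Z))\}$. To get a single $\Phi$ that works on a comeager set, one needs the $B_\Phi$ to have the Baire property. That is supplied by Lemmas~\ref{lem:meet} and~\ref{lem:graph}: every degree is a meet of two joins of degrees of elements of $D$, which makes the graph of $\pi$ Borel. Without this step, or a substitute for it, the reverse inequality in Step~A has no support.
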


Recall that $F\colon\Cantor\to\Cantor$ is \emph{Baire measurable} if preimages
of open sets have the Baire property, and that such a function is continuous on
a dense $G_\delta$ set \cite[8.38]{Kec}. Every Borel function is Baire
measurable.

\begin{corollary}\label{cor:bm}
  If an automorphism $\pi$ of $\Dtt$ is induced by a Baire measurable function
  $F\colon\Cantor\to\Cantor$, then $\pi$ is the identity.
\end{corollary}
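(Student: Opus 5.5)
Corollary~\ref{cor:bm} reduces directly to Theorem~\ref{thm:main}. Let $F\colon\Cantor\to\Cantor$ be Baire measurable and induce $\pi$. By the result cited from \cite[8.38]{Kec}, there is a dense $G_\delta$ set $D\subseteq\Cantor$ such that the restriction $F\res D$ is continuous with respect to the subspace topology on $D$.

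A dense $G_\delta$ subset of $\Cantor$ is comeager. Indeed, its complement is a countable union of closed sets, and each of these closed sets has dense complement, so each is nowhere dense. Hence $D$ is comeager.

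Since $F$ induces $\pi$ on all of $\Cantor$, in particular $\dg(F(X))=\pi(\dg(X))$ for every $X\in D$. This says exactly that the continuous function $F\res D\colon D\to\Cantor$ induces $\pi$ on $D$. Theorem~\ref{thm:main} then applies and gives that $\pi$ is the identity.

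The only point that needs any care is the meaning of ``continuous'' in Theorem~\ref{thm:main}. Since $F$ there is only defined on $D$, this must mean continuous on $D$ with its subspace topology, which is exactly what the Kechris result provides. So there is no real obstacle: all the substantive work lies in Theorem~\ref{thm:main}.
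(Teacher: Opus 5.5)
Your proposal is correct and is exactly the paper's argument: restrict $F$ to a dense $G_\delta$ set $D$ on which it is continuous \cite[8.38]{Kec}, note that $F\res D$ still induces $\pi$ on the comeager set $D$, and apply Theorem~\ref{thm:main}. Your added check that a dense $G_\delta$ set is comeager, and your reading of ``continuous'' as continuity in the subspace topology on $D$, are both right.
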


For functions which are continuous on all of $\Cantor$ there is a short
self-contained proof, with some additional information.

\begin{theorem}\label{thm:cts}
  Suppose that an automorphism $\pi$ of $\Dtt$ is induced by a continuous
  function $F\colon\Cantor\to\Cantor$. Then $\pi$ is the identity, that is,
  $F(X)\ett X$ for all $X$. Moreover, every nonempty open subset of $\Cantor$
  contains a basic open set $N_\tau$ on which $F$ coincides with a total
  computable functional and has a total computable left inverse: there are
  total computable $\Lambda,\Gamma\colon\Cantor\to\Cantor$ with
  $F(Z)=\Lambda(Z)$ and $\Gamma(F(Z))=Z$ for all $Z\in N_\tau$.
\end{theorem}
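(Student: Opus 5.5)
The plan is a Baire category argument: first show that $F(X)\ett X$ for comeager many $X$, then localize this to a basic open set on which the reductions are uniform, and finally transport the conclusion to every degree by finite modification.

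\emph{Localization.} For indices $e,i$ of total truth-table functionals $\Phi_e,\Phi_i$, set
\[
B_{e,i}=\{Z\in\Cantor : \Phi_e(Z)=F(Z)\ \text{and}\ \Phi_i(F(Z))=Z\}.
\]
Each $B_{e,i}$ is closed, because $F$ is continuous and total truth-table functionals are continuous on all of $\Cantor$. If $\bigcup_{e,i}B_{e,i}$ is comeager, the Baire category theorem applies inside any nonempty open $U$: some $B_{e,i}$ is nonmeager in $U$, hence, being closed, contains a basic open $N_\tau\subseteq U$. Taking $\Lambda=\Phi_e$ and $\Gamma=\Phi_i$ gives the ``moreover'' clause.

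\emph{The identity.} Given any $X$, the string $\tau\cat X$ differs from $X$ by a finite shift and prefix, so $\tau\cat X\ett X$. Since $\tau\cat X\in N_\tau$, we have $F(\tau\cat X)=\Lambda(\tau\cat X)$ and $\Gamma(F(\tau\cat X))=\tau\cat X$, so $F(\tau\cat X)\ett\tau\cat X$. Therefore
\[
\pi(\dg X)=\pi(\dg(\tau\cat X))=\dg F(\tau\cat X)=\dg(\tau\cat X)=\dg X.
\]
Thus $\pi$ is the identity, and then $F(X)\ett X$ for every $X$ because $F$ induces $\pi$.

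\emph{The main obstacle} is the first step: showing that comeager many $X$ satisfy $F(X)\ett X$. Here is what I would try.
\begin{enumerate}
\item Fix an oracle $A$ coding a modulus of uniform continuity of $F$ (which exists since $\Cantor$ is compact). Then $F$ is a total $A$-truth-table functional, so $F(X)\ltt X\oplus A$ for all $X$.
\item Note that $\pi$ preserves least upper bounds, and the join is the least upper bound in $\Dtt$. Hence $F(X\oplus Y)\ett F(X)\oplus F(Y)$ for all $X,Y$.
\item Adapt the coding lemma from \cite{Mar}. Take $X$ sufficiently Cohen generic over $A$, and compare $F$ on $X$ with $F$ on its $2$-adic images $\Padd(X)$, $\Psub(X)$, $\Phalf(X)$. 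Each of these images is truth-table equivalent to $X$, uniformly in $X$, and each map preserves genericity. This should force the reductions witnessing $\dg F(\Padd(X))=\dg F(X)$ to be uniform on an open set. Genericity over $A$ should then let us strip $A$ from $F(X)\ltt X\oplus A$, and symmetrically from the reduction for $\pi^{-1}$, which is induced on $\{F(X)\}$ by the same data.
\end{enumerate}
If stripping the parameter $A$ in step 3 fails, I would fall back on Anderson's theorem \cite{And} that $\pi$ is the identity on a cone. The idea would be to combine it with the join preservation in step 2, so that a statement about $X\oplus C$ descends to $X$ for generic $X$.
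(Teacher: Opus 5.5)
Your localization and transport steps are correct. The sets $B_{e,i}$ are closed, so once $F(Z)\ett Z$ holds on a comeager set, every nonempty open set contains some $N_\tau\subseteq B_{e,i}$. Then $\tau\cat X\ett X$ forces $\pi$ to be the identity; this is even a slightly more direct route to the ``moreover'' clause than the paper's. The gap is the first step, which is essentially the whole theorem, and your sketch for it does not work.
\begin{enumerate}
\item \emph{Removing $A$.} Genericity of $X$ over a fixed oracle $A$ cannot remove $A$ from $F(X)\ltt X\oplus A$. For $F(X)=X\oplus A$ with $A$ noncomputable, that reduction holds for every $X$, yet $F(X)\not\ltt X$ whenever $X$ is $1$-generic relative to $A$. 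The parameter-removal principle that does hold, Lemma~\ref{lem:remove}, needs the parameters themselves to range over a comeager set while $X$ is fixed, and the automorphism hypothesis has to enter essentially.
\item \emph{The ``symmetric'' argument for $\pi^{-1}$.} The range of $F$ may be nowhere dense: $X\mapsto X\oplus 0^{\omega}$ is continuous and induces the identity. So nothing you are given induces $\pi^{-1}$ on a comeager set.
\item \emph{Your step~3.} It misreads the mechanism: comparing $F(X)$ with $F$ at images of $X$ that are tt-equivalent to $X$ only yields degree equalities you already have.
\item \emph{The fallback.} Anderson's theorem gives only $\pi(\mathbf{x})\vee\mathbf{c}=\mathbf{x}\vee\mathbf{c}$, which does not determine $\pi(\mathbf{x})$; meets of degrees in the cone above $\mathbf{c}$ stay in that cone.
\end{enumerate}

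What is missing is the coding argument, and the observation that a single inequality suffices. The paper proves $X\ltt F(X)$ for \emph{all} $X$, with no genericity.
\begin{itemize}
\item By Lemma~\ref{lem:unif} (equality sets of total functionals are closed), there are single tt-functionals $\Padd,\Psub,\Phalf$ such that $F(G+H)=\Padd(F(G)\oplus F(H))$, $F(G)=\Psub(F(G+H)\oplus F(-H))$ and $F(G+H)=\Phalf(F(G)\oplus F(G+2H))$ for all $(G,H)$ in a box $N_\alpha\times N_\beta$.
\item A splitting bit gives $\tau\supseteq\alpha$, $j$ and $b$ with $F(\tau\cat W)(j)=b\oplus W(0)$.
\item $X$ is encoded as the translation $T=0^{l}\cat X$ of the computable parameter $G=\tau\cat 0^{\omega}$. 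Take $H=\beta\cat 0^{\omega}$, so that $F(-H)$, $F(G+2H)$ and $F(H-2^{l})$ are computable by (C). Translation and halving then give a tt-functional $\Theta$ with $\Theta(F(\tau\cat W))=F(\tau\cat S(W))$.
\item Iterating, $\Gamma(A)(m)=b\oplus\Theta^{m}(A)(j)$ recovers $X$ from $F(\tau\cat X)$.
\end{itemize}
The upper bound is never proved directly. From $X\ltt F(X)$ and surjectivity of $\pi$, every $Z$ has some $X=\Phi(Z)$ with $\Lambda(F(\Phi(Z)))=Z$ and $\Lambda'(Z)=F(\Phi(Z))$. These conditions define closed sets, so Baire category yields a tt-functional $\widetilde F(Z)=\Phi(\sigma\cat Z)$ inducing $\pi^{-1}$. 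Applying the lower bound to $\widetilde F$ gives $Z\ltt\widetilde F(Z)\ltt Z$, so $\pi^{-1}$, and hence $\pi$, is the identity.
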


Theorem~\ref{thm:cts} is of course a special case of Theorem~\ref{thm:main} as
far as triviality of $\pi$ is concerned, but its proof (Section~\ref{sec:cts})
uses neither forcing nor any descriptive set theory beyond the Baire category
theorem for open subsets of Cantor space. For comparison, before \cite{OAI}
the corresponding statement for the Turing degrees had been obtained for
special classes of continuous $F$: those induced by a permutation of $\omega$
\cite{KH1}, and homeomorphisms which are bi-uniform $E_0$-isomorphisms
\cite{KH2}. For the truth-table degrees no hypothesis beyond continuity is
needed.

The hypothesis of Theorem~\ref{thm:main} is satisfied by every automorphism if
all sets of reals are sufficiently regular. We work this out for the Axiom of
Determinacy, $\mathrm{AD}$, in Section~\ref{sec:ad}.

\begin{theorem}[$\mathrm{ZF}+\mathrm{AD}$]\label{thm:ad}
  Every automorphism of $\Dtt$ is the identity.
\end{theorem}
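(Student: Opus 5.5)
The plan is to deduce Theorem~\ref{thm:ad} from Theorem~\ref{thm:main}: under $\mathrm{AD}$, any automorphism $\pi$ of $\Dtt$ should be induced, on a comeager set, by a continuous function. We work in $\mathrm{ZF}+\mathrm{AD}$. Since $\mathrm{AD}$ gives countable choice for sets of reals, the Baire category arguments behind Theorem~\ref{thm:main} remain available, including the fact that a countable intersection of comeager sets is comeager.

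\textbf{Step 1: obtain a selector.} Fix $\pi$. We need a function $F\colon\Cantor\to\Cantor$ with $F(X)\in\pi(\dg(X))$ for all $X$. Consider the relation $R(X,Y)\iff Y\in\pi(\dg(X))$. Every section $R_X$ is nonempty, since each tt-degree contains a real. Full uniformization is not available under $\mathrm{AD}$, because $R$ need not be nice. Instead we use the standard consequence of $\mathrm{AD}$ that every relation $R\subseteq\Cantor\times\Cantor$ with all sections nonempty has a uniformizing function on a comeager set, and indeed a continuous one. This follows from the Baire property of all sets via a Banach--Mazur game argument, or from the result that under $\mathrm{AD}$ every total relation on reals admits a choice function that is continuous on a comeager set. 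A direct route is as follows.

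For each $\sigma\in\strings$, play the game $G_\sigma$ in which player I chooses a condition extending $\sigma$, players alternate extending the real $X$ in the Banach--Mazur manner, and player II simultaneously builds $Y$ bit by bit, with player II winning iff $R(X,Y)$. By $\mathrm{AD}$ this game is determined.

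\textbf{Step 2: player I cannot win.} This is the main obstacle. A winning strategy for I would yield a continuous map $s$ sending II's moves to an $X$ that avoids $R$ for every $Y$ that II could play. To get a contradiction, we would like II to play a $Y$ that is truth-table equivalent to some fixed representative of $\pi(\dg(X))$. However, $Y$ must be built before $X$ is known. To get around this, we use the fact that $\pi$ is an automorphism, so $\pi^{-1}$ exists. Player II plays some $Y$ generic enough that $X=s(Y)$ satisfies $X\le_{\mathrm{tt}}$-bounded relations, and then arranges $\pi(\dg(X))=\dg(Y)$ by choosing $Y$ in $\pi(\dg(X'))$ for a suitable $X'$ coded into $Y$.

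Because this step is delicate, our first attempt would be a softer route: apply the Baire property of all sets directly. Under $\mathrm{AD}$ every function $\Cantor\to\Cantor$ is Baire measurable, since preimages of open sets have the Baire property. So it suffices to obtain any choice function $F$ for $R$. For this we would invoke the uniformization-on-a-comeager-set theorem for arbitrary relations under $\mathrm{AD}$, which is Kechris's result that $\mathrm{AD}$ implies comeager uniformization, proved via the game above with player I's win ruled out by a Kuratowski--Ulam argument.

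\textbf{Step 3: conclude.} Given a uniformizing $F$ on a comeager set $C$, the function $F$ is Baire measurable. Hence it is continuous on a dense $G_\delta$ set $D\subseteq C$, which is comeager. By construction $F$ induces $\pi$ on $D$, so Theorem~\ref{thm:main} applies and gives $\pi=\mathrm{id}$.

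We expect Step 2, the comeager uniformization under $\mathrm{AD}$, to be the only real work. Everything else is bookkeeping, together with checking that the proof of Theorem~\ref{thm:main} uses no more choice than countable choice for reals.
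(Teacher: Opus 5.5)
Your architecture matches the paper's. Comeager uniformization under $\mathrm{AD}$ gives a continuous $F$ on a dense $G_\delta$ set inducing $\pi$ there, Theorem~\ref{thm:main} applies, and one checks that its proof needs only $\mathrm{AC}_\omega(\mathbb{R})$ and the Baire property.

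The gap is in the step you yourself single out as the real work: that player I has no winning strategy in the unfolded game (Lemma~\ref{lem:cu} in the paper). Your sketch tries to exploit $\pi^{-1}$ and to make II's real $Y$ satisfy $Y\in\pi(\dg(s(Y)))$. That is a fixed-point condition nothing guarantees. Properties of $\pi$ cannot be what matters, since the lemma holds for every $R$ with nonempty sections. In the end you only cite the result, which is indeed standard, but ``ruled out by a Kuratowski--Ulam argument'' is not the argument.

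The missing idea is to fix $X$ \emph{before} II's bits are chosen. Given a strategy $\varsigma$ for I, take each position $p$ just after a move of $\varsigma$ and each bit $a$. The set
\[
O_{p,a}=(\Cantor\setminus N_{x_p})\cup\bigcup_{t\neq\emptyset}N_{x_p\cat t\cat\varsigma(p\cat(t,a))}
\]
is dense open, because II's move $(t,a)$ lets II extend $X$ by an arbitrary nonempty $t$. There are countably many such sets, so pick $X\supset s_0$ (where $s_0$ is $\varsigma$'s first move) lying in all of them. Only then pick $A$ with $(X,A)\in R$. At each stage $X\in O_{p_n,A(n)}\cap N_{x_{p_n}}$, so II can play $(t_n,A(n))$ with $t_n$ the least nonempty string such that $x_{p_n}\cat t_n\cat\varsigma(p_n\cat(t_n,A(n)))\subset X$. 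The resulting play follows $\varsigma$ and produces exactly $(X,A)$, so II wins. This uses only the Baire category theorem in $X$ and the nonemptiness of a single section.

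Your ``softer route'' is not an alternative. $\mathrm{ZF}+\mathrm{AD}$ provides no total function $X\mapsto F(X)\in\pi(\dg(X))$; this is precisely why the paper asks only for $F$ on a comeager set. The Baire property of all sets does not produce such a function. Step~3 is harmless but redundant, since the winning strategy for II already yields a continuous $F$ on a dense $G_\delta$ set.

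The remaining choice bookkeeping is as you anticipate. The points the paper actually checks are these: every dense $G_\delta$ set comes with a fixed witnessing sequence of dense open sets; $\mathrm{AC}_\omega(\mathbb{R})$ makes countable unions of meager sets meager, which is needed in Proposition~\ref{prop:inverseD}; and the sets $B_\Phi$ have the Baire property simply because all sets do.
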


Everything else in this note is proved in $\mathrm{ZFC}$.

Finally, the proofs use only a few properties of truth-table reductions, which
we isolate in Section~\ref{sec:other}. They are shared by weak truth-table
reducibility.

\begin{theorem}\label{thm:wttintro}
  Theorems~\ref{thm:main}, \ref{thm:cts} and~\ref{thm:ad} and
  Corollary~\ref{cor:bm} hold for the wtt-degrees in place of $\Dtt$.
\end{theorem}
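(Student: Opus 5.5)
The plan is not to redo the arguments but to make them axiomatic. I will go through the proofs of Theorems~\ref{thm:main}, \ref{thm:cts} and~\ref{thm:ad} and Corollary~\ref{cor:bm} and list every property of $\ltt$ that they actually use. I will then check each property for $\le_{\mathrm{wtt}}$. Once all properties are verified, the proofs carry over word for word with $\ett$, $\Dtt$ replaced by $\equiv_{\mathrm{wtt}}$ and the wtt-degrees. Section~\ref{sec:other} will state this list as a short set of axioms on a reducibility $\le_r$ and prove the theorems from those axioms alone.

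I expect the list to have four kinds of property.
\begin{enumerate}
\item \emph{Structural properties.} $\le_r$ is a preorder. It is given by a countable family of partial functionals closed under composition. It contains every total computable functional, and in particular the coding operations realizing the maps behind \Radd, \Rsub, \Rhalf, together with $X\mapsto X\oplus Y$ and the projections.
\item \emph{A continuity or uniformity property.} If $X\le_r Y$, this is witnessed by a reduction $\Phi$ with a computable bound on its use. Such reductions are continuous, and the sets $\{(X,Y):\Phi^Y=X\}$ are closed. This is what lets Baire category and forcing arguments work: a reduction holding on a comeager or generic set is fixed by one index on some basic open set $N_\tau$.
\item \emph{An independence fact about Cohen generics.} For mutually generic $X,Y$ we have $X\not\le_r Y$. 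More generally, the specific non-reductions needed to show that $\pi$ must send the degree of a generic-based coding to the expected degree.
\item \emph{The parts of the proof of Theorem~\ref{thm:cts} that produce total $\Lambda$ and $\Gamma$.} These rely on a reduction between generic reals being total on a neighbourhood.
\end{enumerate}

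For wtt I will check these one at a time. Closure under composition holds because composing two computable use bounds gives a computable use bound. Total computable functionals are tt, hence wtt, reductions. A wtt reduction with computable use bound $g$ is continuous wherever it converges, with each output bit determined by $Y\res g(n)$. The independence facts about generics hold already for Turing reducibility, so they hold a fortiori for wtt.

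The main obstacle is the step where tt-totality is used. This is where the proof extracts total computable $\Lambda,\Gamma$ on some $N_\tau$. A wtt functional need not be total off its domain of correctness, so I would replace totality by the following claim. If a wtt functional $\Phi$ with use bound $g$ satisfies $\Phi^{F(Z)}=Z$ on a comeager subset of $N_\sigma$, then there is $\tau\supseteq\sigma$ on which $\Phi\circ F$ is the identity for \emph{all} $Z$. To prove this, for each output bit $n$ I would use the computable bound $g(n)$ together with the continuity of $F$ on the closed set of agreement. If necessary I would then modify $\Phi$ computably to output a default value whenever it fails to converge within a computable search on a compact clopen set. By compactness of $N_\tau$, this modification yields a total, indeed tt, functional that agrees with $\Phi$ there.

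If this patching works, every axiom holds for wtt, and the four results follow for the wtt-degrees exactly as for $\Dtt$.
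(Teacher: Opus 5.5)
Your overall strategy is the paper's: Section~\ref{sec:other} isolates the properties of tt-functionals that the proofs use (admissible classes, Definition~\ref{def:adm}) and verifies them for wtt (Proposition~\ref{prop:wtt}). But the list you expect is not the one the proofs need, and the four results do not follow from it.

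The essential omission is closure under \emph{iteration}, $A\mapsto(\Psi(\Theta^{m}(A))(j))_{m\in\omega}$, together with case distinction on a fixed bit and pairing. The functional $\Gamma$ in Step~4 of Proposition~\ref{prop:lower} and the final decoding in Proposition~\ref{prop:coding} have exactly this form. There $\Theta$ is built by a case distinction from the functionals $\Padd,\Psub,\Phalf$ supplied by Lemma~\ref{lem:unif}, which in the wtt setting are only partial. Closure under composition gives each $\Theta^{m}$ separately, not a single reduction whose $m$th bit runs $\Theta^{m}$. For wtt you have to check that one machine does this, with use on input $m$ bounded by a computable function of $m$: the $m$-fold iterate of the use bound of $\Theta$, evaluated at the use of $\Psi$ on input $j$. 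Remark~\ref{rem:limits} notes that closure under iteration fails for bounded truth-table and for arithmetic reducibility. So it is a genuine hypothesis, not a consequence of closure under composition.

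Conversely, your item~3 is never used. The facts about generic parameters that \emph{are} used, Lemma~\ref{lem:remove} and Lemma~\ref{lem:meet}\eqref{mt:meet}, are positive conclusions of the form $B\le X$. They do not transfer ``a fortiori'' from Turing reducibility, whose versions only give $B\le_{\mathrm{T}}X$. They must be reproved for wtt functionals, which works because the relevant sets of parameters are closed.

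Your treatment of totality in Theorem~\ref{thm:cts} covers only half of what is needed. Running the machine on a clopen $N_\tau$ contained in the domain and outputting a default value elsewhere is Lemma~\ref{lem:adm}\eqref{adm:total}. It produces the tt-functional $\Lambda$, and also the tt-functionals $\widetilde F$ and $\Psi$ of Propositions~\ref{prop:inverse} and~\ref{prop:inverseD} (in the latter via the fact that a closed domain containing a dense subset of $N_\sigma$ contains $N_\sigma$). It does not produce a total $\Gamma$. A left inverse is applied to the reals $F(Z)$ with $Z\in N_\tau$, and $F[N_\tau]$ is compact but in general not clopen. So compactness of $N_\tau$ gives you no decidable criterion for where the modified machine may stop.

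The paper handles this in three steps:
\begin{enumerate}
\item It first gets $F=\Lambda$ on $N_\tau$ with $\Lambda$ a tt-functional.
\item It observes that $\Lambda$ is injective on $N_\tau$, because $F$ has a left inverse there.
\item It proves Lemma~\ref{lem:inv}: by compactness, for each $n$ there is $k$ such that $\Lambda(Z)\res k=\Lambda(Z')\res k$ implies $Z\res n=Z'\res n$ for all $Z,Z'\in N_\tau$. This is decidable in $(n,k)$, so the least such $k$ is computable and yields a tt left inverse.
\end{enumerate}
For wtt specifically, you could instead totalize the left inverse using its computable use bound $u$ and the decidable set of initial segments of elements of $\Lambda[N_\tau]$. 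Either way, $\Lambda$ has to be obtained first, and the relevant compact set is $\Lambda[N_\tau]$, not $N_\tau$.
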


This is Corollary~\ref{cor:wtt} below.

\subsection*{How the proof differs from the Turing case}
Truth-table reducibility is at first sight less suited to the argument of
\cite{Mar} than Turing reducibility, because the decoding procedure of
\cite[Lemma~0.2]{Mar} is presented as an unbounded search, and a procedure with
an unbounded search is not a truth-table reduction. Two observations remove the
difficulty, and a third removes the need for any hypothesis on the inverse
automorphism.

\begin{enumerate}
\item A truth-table reduction is given by a \emph{total} computable functional
  $\Phi\colon\Cantor\to\Cantor$, which is continuous. Hence a statement of the
  form ``$\varphi(x)=\Phi(\psi(x))$'', for continuous $\varphi$ and $\psi$,
  defines a \emph{closed} set of parameters $x$. So the Baire category theorem
  directly yields an open set of parameters on which a single $\Phi$ works
  (Lemma~\ref{lem:unif}). In the continuous case this replaces forcing
  altogether; in the general case it replaces forcing by a statement about
  comeager sets.
\item The search in \cite{Mar} is in fact bounded. There one has a real $X'$
  extending $0^{l}$ and searches for an $n$ such that $G+nX'$ extends a given
  string $q_1$ of length $l+d$; such an $n$ exists if and only if $X'(l)=1$.
  But if $X'(l)=1$ then $X'/2^{l}$ is a $2$-adic unit, so some $n<2^{d}$
  works. Thus $X'(l)=1$ if and only if the search succeeds for some $n<2^{d}$,
  and the second, parallel search of \cite{Mar} is not needed. In the
  continuous case one can even arrange $d=1$, so that there is no search at
  all. (Below we organize the bounded search slightly differently, using
  translations by finitely many fixed integers instead of multiples of $X'$.)
\item In \cite{Mar} the coding lemma is applied to a function inducing
  $\pi^{-1}$, which is available there because the theorem of Slaman and Woodin
  applies to every automorphism. Here we are given a function for $\pi$ only.
  The coding lemma for $\pi$ gives $\pi^{-1}(\mathbf{z})\le\mathbf{z}$ for all
  $\mathbf{z}$, so that representatives of $\pi^{-1}(\dg(Z))$ are computed from
  $Z$ by tt-functionals. To find a single tt-functional that works on a
  comeager set we need the set of $Z$ on which a given functional works to have
  the Baire property. This holds because the degrees of the reals in any
  comeager set generate $\Dtt$ under joins and meets (compare Jockusch and
  Posner \cite{JP} for the Turing degrees), so that $\pi$ is
  determined by $F\res D$ in a way that makes its graph Borel
  (Lemma~\ref{lem:graph}).
\end{enumerate}

With the first two points, every step of the decoding becomes a total
computable functional, and a uniformly computable sequence of total functionals
can be iterated without leaving the class of truth-table reductions.

\subsection*{What remains open}
Since the identity automorphism is induced by the identity function,
Theorem~\ref{thm:main} and Corollary~\ref{cor:bm} show that the rigidity of
$\Dtt$ is \emph{equivalent} to a positive answer to either part of the
following question.

\begin{question}\label{q:rep}
  \begin{enumerate}
  \item Is every automorphism of $\Dtt$ induced on some comeager set by a
    continuous function?
  \item Is every automorphism of $\Dtt$ induced by an arithmetic function, as
    in the theorem of Slaman and Woodin \cite{SW} for the Turing degrees?
  \end{enumerate}
\end{question}

By Theorem~\ref{thm:ad} the answer is positive under $\mathrm{AD}$. In
$\mathrm{ZFC}$ the question is open as far as we know.

\section{Preliminaries}\label{sec:prelim}

\subsection{Notation}
We write $\Cantor$ for Cantor space and $\strings$ for the set of finite binary
strings. For $\sigma\in\strings$, $|\sigma|$ is its length and
$N_\sigma=\{X\in\Cantor:\sigma\subset X\}$ is the corresponding basic clopen
set. For $\sigma\in\strings$ and $X\in\Cantor$, $\sigma\cat X$ is the
concatenation; $0^{k}$ is the string of $k$ zeros and $0^{\omega}$ the
all-zero sequence. $X\res n$ is the initial segment of $X$ of length $n$. The
shift $S\colon\Cantor\to\Cantor$ is given by $S(X)(n)=X(n+1)$, so that
$S(e\cat X)=X$ for $e\in\{0,1\}$. $X\oplus Y$ is the usual recursive join. For
bits $b,e\in\{0,1\}$, $b\oplus e$ also denotes addition modulo $2$; no
confusion should arise.

Let $U$ be a topological space. We say that a property holds \emph{for comeager
many} $u\in U$ if the set of $u\in U$ with the property is comeager in $U$.

\subsection{Truth-table functionals}
A \emph{tt-functional} is a total computable function
$\Phi\colon\Cantor\to\Cantor$; that is, $\Phi$ is computed by an oracle Turing
machine which halts on every oracle and every input. As is well known (see
e.g.\ \cite{Odi}), for $A,B\in\Cantor$,
\[
  A\ltt B\iff A=\Phi(B)\text{ for some tt-functional }\Phi .
\]
We use the following evident facts without further comment. There are only
countably many tt-functionals. Every tt-functional is continuous. The
composition of two tt-functionals is a tt-functional; if $\Phi$ is a
tt-functional and $P\in\Cantor$ is computable then $A\mapsto\Phi(A\oplus P)$ is
a tt-functional; a function defined by cases, according to the value of a fixed
bit of the argument, from tt-functionals is a tt-functional. Finally, if
$\Theta$ and $\Psi$ are tt-functionals and $j\in\omega$, then $A\mapsto
(\Psi(\Theta^{m}(A))(j))_{m\in\omega}$ is a tt-functional, where $\Theta^{m}$
is the $m$-fold iterate of $\Theta$: to compute the $m$th bit, run the (always
halting) computations of $\Theta$ successively $m$ times and then that of
$\Psi$.

The truth-table degree of $X$ is denoted $\dg(X)$. The degree $\dg(A\oplus B)$
is the least upper bound of $\dg(A)$ and $\dg(B)$ in $\Dtt$, and the degree of
the computable sets is the least element of $\Dtt$.

\subsection{The \texorpdfstring{$2$}{2}-adic group structure}
As in \cite{Mar}, we identify $X\in\Cantor$ with the $2$-adic integer
$\sum_{n}X(n)2^{n}$, and we write $+$ and $-$ for $2$-adic addition and
subtraction (addition of binary expansions with carries propagating to the
right). This makes $\Cantor$ a compact abelian topological group with identity
$0^{\omega}$. A natural number $n$ is identified with the element of $\Cantor$
given by its binary expansion followed by zeros; in particular
$2^{l}=0^{l}\cat 1\cat 0^{\omega}$. We collect the facts we need.

\begin{lemma}\label{lem:2adic}
  \begin{enumerate}
  \item\label{2a:tt} The maps $(X,Y)\mapsto X+Y$ and $X\mapsto -X$ are
    computable and total; in particular $X+Y\ltt X\oplus Y$ and $-X\ltt X$.
    For each $n$, $(X+Y)\res n$ depends only on $X\res n$ and $Y\res n$: it is
    the binary expansion of the sum of the two integers modulo $2^{n}$.
  \item\label{2a:sub} For each $k$, $N_{0^{k}}$ is a subgroup. If $T\in
    N_{0^{k}}$ then $(X+T)\res k=X\res k$ for every $X$. Consequently, for
    every $\alpha\in\strings$ with $|\alpha|\le k$ and every $T\in N_{0^{k}}$,
    the translation $X\mapsto X+T$ is a homeomorphism of $N_\alpha$ onto
    itself.
  \item\label{2a:half} $2X=0\cat X$. For $T\in N_{0^{1}}$ put $T/2=S(T)$; then
    $2(T/2)=T$, the map $T\mapsto T/2$ is computable on $N_{0^1}$, and
    $T\in N_{0^{k+1}}$ implies $T/2\in N_{0^{k}}$.
  \item\label{2a:prefix} Let $\tau\in\strings$ have length $l$ and let
    $W\in\Cantor$. Then $\tau\cat 0^{\omega}+0^{l}\cat W=\tau\cat W$.
  \end{enumerate}
\end{lemma}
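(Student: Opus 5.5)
The plan is to verify the four items directly from the definition of $2$-adic addition as binary addition with carries moving to the right, i.e.\ toward higher indices. The one structural fact I will lean on throughout is that the carry into position $n$ depends only on the bits in positions $<n$.

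For (\ref{2a:tt}): to compute $(X+Y)(n)$, run the ordinary carry recursion on positions $0,\dots,n$. Put $c_0=0$, and at each position output $X(i)\oplus Y(i)\oplus c_i$ with new carry $c_{i+1}=\lfloor (X(i)+Y(i)+c_i)/2\rfloor$. This is a total algorithm that queries only $X\res(n+1)$ and $Y\res(n+1)$, so it is a tt-functional. The same recursion shows that $(X+Y)\res n$ is determined by $X\res n$ and $Y\res n$. It also equals the expansion of $\sum_{i<n}X(i)2^i+\sum_{i<n}Y(i)2^i \bmod 2^n$, because the first $n$ output bits are exactly schoolbook addition truncated at $2^n$. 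For negation, I will use $-X=\overline{X}+1$, where $\overline X$ is the bitwise complement; this holds because $X+\overline X=1^\omega$ and $1^\omega+1=0^\omega$ by carry propagation. Composing tt-functionals then gives $-X\ltt X$, and $X+Y\ltt X\oplus Y$ follows by reading both halves of the join.

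For (\ref{2a:sub}): if $T\in N_{0^k}$, then by (\ref{2a:tt}) we have $(X+T)\res k=X\res k+0 \bmod 2^k=X\res k$. Applying this with $X\in N_{0^k}$ shows that $N_{0^k}$ is closed under addition. By (\ref{2a:tt}) again, $(-T)\res k$ is the expansion of $-0\bmod 2^k$, so $-T\in N_{0^k}$ and $N_{0^k}$ is a subgroup. The translation $X\mapsto X+T$ is continuous with continuous inverse $X\mapsto X-T$ (again by (\ref{2a:tt})). Since $|\alpha|\le k$, both maps preserve the first $|\alpha|$ bits, so each maps $N_\alpha$ into itself and they are mutually inverse there. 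Hence the translation is a homeomorphism of $N_\alpha$ onto itself.

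For (\ref{2a:half}): $X+X$ has no sum bit at position $0$, and at each position $i$ the carry out is $X(i)$. This gives $2X=0\cat X$, equivalently that multiplication by $2$ of $\sum X(n)2^n$ shifts the index. If $T(0)=0$, then $T=0\cat S(T)$, so $2(T/2)=T$. The map $S$ is computable, and $T\in N_{0^{k+1}}$ gives $S(T)\in N_{0^k}$.

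For (\ref{2a:prefix}): add position by position. For $i<l$, the second summand contributes $0$, and the claim is that no carry ever arises. At these positions the sum bit is $\tau(i)$, and the carry stays $0$ because at most one summand bit is $1$. For $i\ge l$, the first summand contributes $0$ and the carry is still $0$, so the sum bit is $W(i-l)$.

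There is no real obstacle here; the only point requiring care is to state clearly that carries move only toward larger indices. Everything else, including the subgroup and homeomorphism claims, follows from the truncation statement in (\ref{2a:tt}).
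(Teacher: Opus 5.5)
Your proposal is correct and follows the paper's route: every item is checked directly from the carry definition, translation by $T\in N_{0^{k}}$ is a homeomorphism of $N_\alpha$ because its inverse is translation by $-T\in N_{0^{k}}$, and item~(4) holds because no carry occurs. You simply spell out details the paper calls immediate, such as the formula $-X=\overline{X}+1$ witnessing $-X\ltt X$.
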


\begin{proof}
  All of this is immediate from the definition of addition with carries.
  For~\eqref{2a:sub}, $N_{0^k}$ is the set of $2$-adic integers divisible by
  $2^{k}$; the translation by $T\in N_{0^{k}}$ maps $N_\alpha$ into itself, and
  so does its inverse, the translation by $-T\in N_{0^{k}}$.
  For~\eqref{2a:prefix}, no carry occurs.
\end{proof}

\subsection{Basic properties of a function inducing an automorphism}

\begin{lemma}\label{lem:basic}
  Suppose that $F\colon\Cantor\to\Cantor$ induces an automorphism $\pi$ of
  $\Dtt$. Then for all $A,B\in\Cantor$:
  \begin{itemize}
  \item[\textup{(M)}] if $A\ltt B$ then $F(A)\ltt F(B)$;
  \item[\textup{(J)}] $F(A\oplus B)\ett F(A)\oplus F(B)$;
  \item[\textup{(C)}] if $A$ is computable then $F(A)$ is computable;
  \item[\textup{(I)}] if $F(A)\ett F(B)$ then $A\ett B$.
  \end{itemize}
  Consequently:
  \begin{itemize}
  \item[\textup{(N)}] $F$ is not constant on any uncountable set; in
    particular $F$ is not constant on any nonempty open set, nor on any set
    which is comeager in a nonempty open set.
  \end{itemize}
\end{lemma}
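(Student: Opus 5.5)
We derive each property directly from the definition $\dg(F(X))=\pi(\dg(X))$ and the fact that $\pi$ is an order automorphism of $\Dtt$.

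For (M), if $A\ltt B$ then $\dg(A)\le\dg(B)$. Since $\pi$ preserves order, $\pi(\dg(A))\le\pi(\dg(B))$, which says $F(A)\ltt F(B)$.

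For (J), an order automorphism preserves least upper bounds. As $\dg(A\oplus B)$ is the join of $\dg(A)$ and $\dg(B)$, we get that $\pi(\dg(A\oplus B))$ is the join of $\pi(\dg(A))$ and $\pi(\dg(B))$. The latter join is $\dg(F(A)\oplus F(B))$, so $F(A\oplus B)\ett F(A)\oplus F(B)$.

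For (C), an automorphism fixes the least element. So if $A$ is computable, $\dg(F(A))=\pi(\mathbf 0)=\mathbf 0$, and $F(A)$ is computable.

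For (I), if $F(A)\ett F(B)$ then $\pi(\dg(A))=\pi(\dg(B))$. Since $\pi$ is injective, $\dg(A)=\dg(B)$, that is, $A\ett B$.

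For (N), suppose $F$ is constant with value $Y$ on a set $E$. By (I), all members of $E$ are pairwise $\ett$-equivalent, so $E$ lies inside a single truth-table degree. Each degree is countable, because there are only countably many tt-functionals and each member of $\dg(A)$ is $\Phi(A)$ for some tt-functional $\Phi$. Hence $E$ is countable. The remaining point is that every nonempty open set, and every set comeager in a nonempty open set, is uncountable. A nonempty open set contains some $N_\sigma$, which is homeomorphic to $\Cantor$. A set comeager in $N_\sigma$ cannot be countable: a countable set is meager in $N_\sigma$, since singletons are nowhere dense there, and $N_\sigma$ itself is not meager by the Baire category theorem.

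There is no real obstacle here. The only step needing any care is the Baire category remark in (N), namely that a comeager subset of a nonempty open set cannot be countable.
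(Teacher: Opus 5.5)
Your proof is correct and follows the paper's argument: (M) and (I) come from $\pi$ being an order isomorphism, (J) and (C) from preservation of least upper bounds and of the least element, and (N) from (I) together with countability of truth-table degrees. Your Baire category justification that a set comeager in a nonempty open set is uncountable fills in a detail the paper states without proof.
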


\begin{proof}
  (M) and (I) hold because $\pi$ is an order isomorphism. (J) and (C) hold
  because an automorphism of a partial order preserves least upper bounds and
  the least element. For (N): a truth-table degree is a countable set of reals,
  so an uncountable set contains $A,B$ with $A\not\ett B$, and then
  $F(A)\neq F(B)$ by (I). A set which is comeager in a nonempty open subset of
  $\Cantor$ is uncountable.
\end{proof}

In Section~\ref{sec:general} the function $F$ is only defined on a comeager
set, and we use the following version of Lemma~\ref{lem:basic}.

\begin{lemma}\label{lem:basicD}
  Let $\pi$ be an automorphism of $\Dtt$, let $D\subseteq\Cantor$, and suppose
  that $F\colon D\to\Cantor$ induces $\pi$ on $D$.
  \begin{enumerate}
  \item\label{bd:mj} Let $B,C_1,\dots,C_n\in D$, where $n\ge 1$. If
    $B\ltt C_1\oplus\dots\oplus C_n$, then
    $F(B)\ltt F(C_1)\oplus\dots\oplus F(C_n)$. More generally, let
    $X,A\in\Cantor$ with $\dg(A)=\pi(\dg(X))$. If
    $B\ltt X\oplus C_1\oplus\dots\oplus C_n$, then
    $F(B)\ltt A\oplus F(C_1)\oplus\dots\oplus F(C_n)$.
  \item\label{bd:inj} If $B,C\in D$ and $F(B)\ett F(C)$, then $B\ett C$.
    Consequently $F$ is not constant on any uncountable subset of $D$.
  \end{enumerate}
\end{lemma}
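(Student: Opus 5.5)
The plan is to translate each reducibility statement into a statement about degrees and then use that $\pi$ is an order isomorphism preserving joins, exactly as in Lemma~\ref{lem:basic}. The one care needed is that $F$ is only defined on $D$, so every application of $F$ is to a member of $D$. The general form of (\ref{bd:mj}) contains the first form as a special case: take $X$ computable and $A=0^{\omega}$. Then $\dg(A)=\pi(\dg(X))$, because $\pi$ fixes the least element. Also $B\ltt C_1\oplus\dots\oplus C_n$ iff $B\ltt X\oplus C_1\oplus\dots\oplus C_n$, and $A\oplus F(C_1)\oplus\dots\oplus F(C_n)\ett F(C_1)\oplus\dots\oplus F(C_n)$. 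So I will prove only the general form.

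For the general form, let $B\ltt X\oplus C_1\oplus\dots\oplus C_n$ with $B,C_i\in D$. In $\Dtt$ this says $\dg(B)\le\dg(X)\vee\dg(C_1)\vee\dots\vee\dg(C_n)$. Apply $\pi$, which preserves order and finite joins, since an automorphism of a partial order preserves least upper bounds. This gives
\[
\pi(\dg(B))\le\pi(\dg(X))\vee\pi(\dg(C_1))\vee\dots\vee\pi(\dg(C_n)).
\]
Because $F$ induces $\pi$ on $D$ and $B,C_i\in D$, we have $\pi(\dg(B))=\dg(F(B))$ and $\pi(\dg(C_i))=\dg(F(C_i))$, and by hypothesis $\pi(\dg(X))=\dg(A)$. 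The join of degrees is the degree of the iterated $\oplus$ (with any bracketing, since $\oplus$ is associative up to $\ett$). Hence $F(B)\ltt A\oplus F(C_1)\oplus\dots\oplus F(C_n)$.

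For (\ref{bd:inj}), suppose $F(B)\ett F(C)$ with $B,C\in D$. Then $\pi(\dg(B))=\dg(F(B))=\dg(F(C))=\pi(\dg(C))$. Since $\pi$ is injective, $\dg(B)=\dg(C)$, that is, $B\ett C$. For the consequence, let $E\subseteq D$ be uncountable. Each tt-degree is countable, because there are only countably many tt-functionals. So $E$ contains $B,C$ with $B\not\ett C$, and then $F(B)\ne F(C)$; thus $F$ is not constant on $E$.

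None of these steps is a real obstacle. The only points needing care are to check that $F$ is applied only to elements of $D$, and that $X$ and $A$ enter only through the degree identity $\dg(A)=\pi(\dg(X))$, with no need for $X\in D$.
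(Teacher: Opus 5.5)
Your proposal is correct and follows essentially the same argument as the paper. For \eqref{bd:mj} you apply $\pi$ to the degree inequality, using that it preserves order and finite joins; for \eqref{bd:inj} you use injectivity of $\pi$ together with countability of truth-table degrees. The only difference is cosmetic: you derive the first form of \eqref{bd:mj} from the general one by taking $X$ computable and $A=0^{\omega}$, which relies on $\pi$ fixing the least degree, whereas the paper simply says it is proved the same way.
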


\begin{proof}
  \eqref{bd:mj} Since $\pi$ preserves order and finite joins,
  $\dg(F(B))=\pi(\dg(B))\le\pi(\dg(X)\vee\dg(C_1)\vee\dots\vee\dg(C_n))
  =\pi(\dg(X))\vee\pi(\dg(C_1))\vee\dots\vee\pi(\dg(C_n))
  =\dg(A\oplus F(C_1)\oplus\dots\oplus F(C_n))$. The first statement is proved
  in the same way. \eqref{bd:inj} holds because $\pi$ is injective and
  truth-table degrees are countable.
\end{proof}

\subsection{Uniformization by Baire category}

The following lemma is the point at which the totality of truth-table
reductions is used.

\begin{lemma}[Uniformization]\label{lem:unif}
  Let $\mathcal{S}$ be a nonempty Baire space and let
  $\varphi,\psi\colon\mathcal{S}\to\Cantor$ be continuous functions such that
  $\varphi(x)\ltt\psi(x)$ for every $x\in\mathcal{S}$. Then there are a
  nonempty open set $V\subseteq\mathcal{S}$ and a tt-functional $\Phi$
  such that
  \[
    \varphi(x)=\Phi(\psi(x))\qquad\text{for all }x\in V .
  \]
\end{lemma}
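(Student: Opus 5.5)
The plan is to combine the countability of tt-functionals with the Baire category theorem, using the fact that each set of parameters on which a fixed functional succeeds is closed. Enumerate the tt-functionals as $\Phi_0,\Phi_1,\dots$. For each $e$ put
\[
  C_e=\{x\in\mathcal{S}:\varphi(x)=\Phi_e(\psi(x))\}.
\]
The hypothesis $\varphi(x)\ltt\psi(x)$ for all $x$, together with the characterization of $\ltt$ via tt-functionals, gives $\mathcal{S}=\bigcup_e C_e$.

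The key step is to show that each $C_e$ is closed in $\mathcal{S}$. Because $\Phi_e$ is total computable it is continuous on $\Cantor$. Hence $x\mapsto\Phi_e(\psi(x))$ is continuous, being a composition of continuous maps. The set $C_e$ is then the preimage of the diagonal $\{(Y,Y):Y\in\Cantor\}$, which is closed in $\Cantor\times\Cantor$ since $\Cantor$ is Hausdorff, under the continuous map $x\mapsto(\varphi(x),\Phi_e(\psi(x)))$. Equivalently,
\[
  C_e=\bigcap_{n}\{x:\varphi(x)(n)=\Phi_e(\psi(x))(n)\},
\]
and each set in this intersection is clopen.

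Since $\mathcal{S}$ is a nonempty Baire space, it is not a countable union of nowhere dense closed sets. So some $C_e$ has nonempty interior. Take $V$ to be that interior and $\Phi=\Phi_e$; then $\varphi(x)=\Phi(\psi(x))$ for all $x\in V$.

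There is no real obstacle here. The one point needing care is that totality of $\Phi_e$ is essential. For a partial Turing functional, the set where it converges and agrees with $\varphi$ would be only $\Pi^0_2$-like rather than closed, and the Baire category argument would then need more than closedness.
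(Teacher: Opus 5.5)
Your proposal is correct and follows essentially the same route as the paper: cover $\mathcal{S}$ by the countably many sets $C_\Phi$, note that each is closed because $\varphi$ and $\Phi\circ\psi$ are continuous into a Hausdorff space, and use the Baire property to find one with nonempty interior. Your extra coordinatewise description of $C_e$ as an intersection of clopen sets is a harmless alternative justification of closedness.
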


\begin{proof}
  For each tt-functional $\Phi$ let
  $C_\Phi=\{x\in\mathcal{S}:\varphi(x)=\Phi(\psi(x))\}$. Since $\varphi$ and
  $\Phi\circ\psi$ are continuous maps into a Hausdorff space, $C_\Phi$ is
  closed in $\mathcal{S}$. By hypothesis $\mathcal{S}=\bigcup_\Phi C_\Phi$, a
  countable union. As $\mathcal{S}$ is a nonempty Baire space, some $C_\Phi$ is
  not nowhere dense, and being closed it has nonempty interior $V$.
\end{proof}

Recall that compact Hausdorff spaces and Polish spaces are Baire spaces, that
a nonempty open subset of a Baire space is a Baire space, and that a dense
$G_\delta$ subset of a Polish space is Polish \cite[3.11, 8.4]{Kec}. We shall
apply Lemma~\ref{lem:unif} only to $\Cantor$, to $\Cantor\times\Cantor$, to
dense $G_\delta$ subsets of these spaces, and to nonempty relatively open
subsets of such sets.

\subsection{The three relations}

Fix a set $D\subseteq\Cantor$, a function $F\colon D\to\Cantor$ and
tt-functionals $\Padd,\Psub,\Phalf$. (In Section~\ref{sec:cts} we will have
$D=\Cantor$.) Let
\[
  D^{*}=\bigl\{(G,H)\in\Cantor\times\Cantor:\ G,\ H,\ G+H,\ -H,\ G+2H\in D
  \bigr\}.
\]
For a pair $(G,H)\in D^{*}$ we consider the three relations
\begin{align*}
  \Radd &\qquad F(G+H)=\Padd\bigl(F(G)\oplus F(H)\bigr),\\
  \Rsub &\qquad F(G)=\Psub\bigl(F(G+H)\oplus F(-H)\bigr),\\
  \Rhalf &\qquad F(G+H)=\Phalf\bigl(F(G)\oplus F(G+2H)\bigr).
\end{align*}
The corresponding reductions between the arguments of $F$ are
\begin{equation}\label{eq:args}
  G+H\ltt G\oplus H,\qquad G\ltt (G+H)\oplus(-H),\qquad
  G+H\ltt G\oplus(G+2H),
\end{equation}
which hold by Lemma~\ref{lem:2adic}: for the last one,
$G+(G+2H)=2(G+H)$, so $G+H=(G+(G+2H))/2$. Hence, if $F$ has the property that
\begin{equation}\label{eq:MJ}
  B\ltt C_1\oplus C_2\ \text{ implies }\ F(B)\ltt F(C_1)\oplus F(C_2)
  \qquad\text{for all }B,C_1,C_2\in D,
\end{equation}
then for every pair $(G,H)\in D^{*}$,
\begin{equation}\label{eq:Fargs}
  \begin{aligned}
    F(G+H)&\ltt F(G)\oplus F(H),\\
    F(G)&\ltt F(G+H)\oplus F(-H),\\
    F(G+H)&\ltt F(G)\oplus F(G+2H).
  \end{aligned}
\end{equation}
Property \eqref{eq:MJ} holds if $D=\Cantor$ and $F$ satisfies (M) of
Lemma~\ref{lem:basic} and $F(A\oplus B)\ltt F(A)\oplus F(B)$ for all $A,B$
(which is half of (J)); and it holds if $F$ induces an automorphism on $D$, by
Lemma~\ref{lem:basicD}\eqref{bd:mj}.

The next lemma records the two consequences of the three relations that drive
the decoding. They correspond to items (3) and (2) of the Claim in
\cite{Mar}.

\begin{lemma}[Translation and halving]\label{lem:trans}
  Let $E\subseteq D^{*}$ be a set of pairs each of which
  satisfies \Radd, \Rsub\ and \Rhalf. Let $G,H,T,\kappa\in\Cantor$.
  \begin{enumerate}
  \item\label{tr:trans} If $(G+T,\,H+\kappa)\in E$ and $(G+T+\kappa,\,H)\in E$,
    then
    \[
      F(G+T+\kappa)=\Psub\Bigl(\Padd\bigl(F(G+T)\oplus F(H+\kappa)\bigr)\oplus
      F(-H)\Bigr).
    \]
  \item\label{tr:half} If $T\in N_{0^{1}}$, $(G+T,\,H-T/2)\in E$ and
    $(G+T/2,\,H)\in E$, then
    \[
      F(G+T/2)=\Psub\Bigl(\Phalf\bigl(F(G+T)\oplus F(G+2H)\bigr)\oplus
      F(-H)\Bigr).
    \]
  \end{enumerate}
\end{lemma}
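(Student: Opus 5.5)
This is a direct substitution: each part uses two of the relations, applied to the two given pairs in $E$. All membership requirements in $D$ are part of the hypothesis that the pairs lie in $E\subseteq D^{*}$.

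For \eqref{tr:trans}, first apply \Radd\ to the pair $(G+T,\,H+\kappa)\in E$. Since $(G+T)+(H+\kappa)=(G+T+\kappa)+H$, this gives
\[
  F(G+T+\kappa+H)=\Padd\bigl(F(G+T)\oplus F(H+\kappa)\bigr).
\]
Next apply \Rsub\ to the pair $(G+T+\kappa,\,H)\in E$:
\[
  F(G+T+\kappa)=\Psub\bigl(F(G+T+\kappa+H)\oplus F(-H)\bigr).
\]
Substituting the first display into the second gives the claimed identity. The only point to check is that the argument $G+T+\kappa+H$ is the same element of $\Cantor$ in both displays. This holds because $2$-adic addition is associative and commutative (Lemma~\ref{lem:2adic}).

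For \eqref{tr:half}, apply \Rhalf\ to the pair $(G+T,\,H-T/2)\in E$. Its left-hand argument is $(G+T)+(H-T/2)=G+T/2+H$. The second argument on the right is $(G+T)+2(H-T/2)=G+T+2H-T$, which equals $G+2H$ because $2(T/2)=T$ by Lemma~\ref{lem:2adic}\eqref{2a:half}. Hence
\[
  F(G+T/2+H)=\Phalf\bigl(F(G+T)\oplus F(G+2H)\bigr).
\]
Now apply \Rsub\ to the pair $(G+T/2,\,H)\in E$:
\[
  F(G+T/2)=\Psub\bigl(F(G+T/2+H)\oplus F(-H)\bigr).
\]
Substituting the previous display gives the stated formula.

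There is no real obstacle here. The step that needs care is the group arithmetic in \eqref{tr:half}: the identity $2(T/2)=T$ requires $T\in N_{0^{1}}$, which is exactly why that hypothesis appears. One must also remember that $2H$ means $H+H$, so that $2(H-T/2)=2H-T$ holds in the abelian group $\Cantor$.
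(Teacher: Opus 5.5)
Your proof is correct and is the same as the paper's. In part (1) you apply \Radd\ to $(G+T,H+\kappa)$ and \Rsub\ to $(G+T+\kappa,H)$. In part (2) you apply \Rhalf\ to $(G+T,H-T/2)$ and \Rsub\ to $(G+T/2,H)$, using $2(T/2)=T$ to identify $(G+T)+2(H-T/2)$ with $G+2H$, exactly as the paper does.
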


\begin{proof}
  Note that since $E\subseteq D^{*}$, all arguments of $F$ occurring below lie
  in $D$.

  \eqref{tr:trans} By \Radd\ for the pair $(G+T,H+\kappa)$,
  $F(G+T+H+\kappa)=\Padd(F(G+T)\oplus F(H+\kappa))$. By \Rsub\ for the pair
  $(G+T+\kappa,H)$, $F(G+T+\kappa)=\Psub(F(G+T+\kappa+H)\oplus F(-H))$.

  \eqref{tr:half} Let $G'=G+T$ and $H'=H-T/2$. Then $G'+H'=G+H+T/2$ and
  $G'+2H'=G+T+2H-2(T/2)=G+2H$. By \Rhalf\ for the pair $(G',H')$,
  $F(G+H+T/2)=\Phalf(F(G+T)\oplus F(G+2H))$. By \Rsub\ for the pair
  $(G+T/2,H)$, $F(G+T/2)=\Psub(F(G+T/2+H)\oplus F(-H))$.
\end{proof}

\section{The continuous case}\label{sec:cts}

Throughout this section $F\colon\Cantor\to\Cantor$ is continuous. The main step
is the following proposition. It uses only properties (M) and (C) of
Lemma~\ref{lem:basic} and weak forms (J$'$), (N$'$) of (J) and (N), and not
the full strength of the assumption that $F$ induces an automorphism.

\begin{proposition}\label{prop:lower}
  Let $F\colon\Cantor\to\Cantor$ be continuous and suppose that
  \begin{itemize}
  \item[\textup{(M)}] $A\ltt B$ implies $F(A)\ltt F(B)$,
  \item[\textup{(J$'$)}] $F(A\oplus B)\ltt F(A)\oplus F(B)$,
  \item[\textup{(C)}] $F(A)$ is computable whenever $A$ is computable, and
  \item[\textup{(N$'$)}] $F$ is not constant on any nonempty open set.
  \end{itemize}
  Then for every $\rho\in\strings$ there are a string $\tau\supseteq\rho$ and a
  tt-functional $\Gamma$ such that
  \[
    \Gamma\bigl(F(\tau\cat X)\bigr)=X\qquad\text{for all }X\in\Cantor.
  \]
  In particular, $X\ltt F(X)$ for all $X\in\Cantor$.
\end{proposition}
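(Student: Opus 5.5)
The plan is to fix tt-functionals $\Padd,\Psub,\Phalf$ once, using Baire category, so that the three relations \Radd, \Rsub, \Rhalf\ hold on an open box. Lemma~\ref{lem:trans} then gives, on a suitable basic open set $N_\tau$, tt-functionals that shift the coded real $X$ one bit at a time. Iterating these shifts, and reading off one bit each time, yields $\Gamma$.

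\emph{Step 1: uniformization.} Take $D=\Cantor$, so $D^{*}=\Cantor\times\Cantor$. By (M) and (J$'$), property~\eqref{eq:MJ} holds, hence \eqref{eq:Fargs} holds for all $(G,H)$. Apply Lemma~\ref{lem:unif} three times on $\mathcal{S}=\Cantor\times\Cantor$, each time shrinking the open set. The first application uses $\varphi(G,H)=F(G+H)$ and $\psi(G,H)=F(G)\oplus F(H)$; the second and third use the other two lines of \eqref{eq:Fargs}. Each time we restrict to a nonempty open subset, which is again a Baire space. This yields a box $E=N_\alpha\times N_\beta$ on which all three relations hold. To arrange $\rho\subseteq\alpha$, first restrict $\mathcal{S}$ to $N_\rho\times\Cantor$.

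\emph{Step 2: the shift functionals.} Let $\tau=\alpha\cat 0^{j}$ for a suitable $j$, put $l=|\tau|$, and let $G=\tau\cat 0^{\omega}$, a computable real. Choose a computable $H\in N_\beta$ such that $-H$ is computable and, after lengthening $\alpha,\beta$ if necessary, $H\pm 2^{l}$, $H-T/2$ and $G+2H$ stay in the region where the relations hold. Lemma~\ref{lem:2adic}\eqref{2a:sub} makes this possible, since translations by elements of $N_{0^{k}}$ preserve the boxes once $k\ge|\alpha|,|\beta|$. By (C), the reals $F(H+\kappa)$, $F(-H)$ and $F(G+2H)$ are then computable.

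Every $Z\in N_\tau$ has the form $G+T$ with $T=0^{l}\cat X$, by Lemma~\ref{lem:2adic}\eqref{2a:prefix}. There are two cases for the first bit of $X$.
\begin{itemize}
\item If $X(0)=0$, then $T\in N_{0^{l+1}}$ and $T/2=0^{l}\cat S(X)$. Lemma~\ref{lem:trans}\eqref{tr:half} gives a tt-functional $\Theta_0$ with $\Theta_0(F(\tau\cat X))=F(\tau\cat S(X))$.
\item If $X(0)=1$, first translate by $\kappa=-2^{l}$ using Lemma~\ref{lem:trans}\eqref{tr:trans}, and then halve. This gives $\Theta_1$ with $\Theta_1(F(\tau\cat X))=F(\tau\cat S(X))$.
\end{itemize}
In both cases the computable inputs are absorbed into the functional.

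\emph{Step 3: reading the bit, and iterating.} What remains is a tt-functional $\beta$ such that $\beta(F(\tau\cat X))=X(0)$. Given $\beta$, set $\Theta(W)=\Theta_{\beta(W)}(W)$, which is a definition by cases on a fixed finite portion of $W$. Then $\Gamma(W)=(\beta(\Theta^{m}(W)))_{m\in\omega}$ is a tt-functional by the iteration fact in the preliminaries, and $\Gamma(F(\tau\cat X))=X$. The final clause follows because $X\ltt\tau\cat X$ and $\tau\cat X\ltt X$; applying (M) to the second gives $F(\tau\cat X)\ltt F(X)$, and hence $X\ltt F(X)$.

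\emph{Main obstacle.} The hard part is producing $\beta$. It suffices to find $\tau$ for which the compact sets $F(N_{\tau\cat 0})$ and $F(N_{\tau\cat 1})$ are disjoint. Compact disjoint sets are separated by a clopen set determined by finitely many strings, and that clopen set gives $\beta$ outright. I would prove disjointness for suitable $\tau$ by contradiction, using (N$'$) and the bounded search of the introduction. Suppose $F(\tau\cat 0\cat Y)=F(\tau\cat 1\cat Y')$ for many pairs. Then applying $\Theta_0$ and $\Theta_1$ gives $F(\tau\cat Y)=F(\tau\cat Y')$. I would use the translation maps, which are uniformly computable from $F$-values, to push such coincidences to a nonempty open set on which $F$ would be constant, contradicting (N$'$). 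If that fails, the fallback is to decide the bit by a bounded search: test finitely many translates by $n<2^{d}$, and choose $\tau$ by a further Baire category argument so that the test is decided by a fixed finite amount of $F(\tau\cat X)$.
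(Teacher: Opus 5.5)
\textbf{There is a genuine gap: you never produce the bit-reading functional.} Your Steps 1 and 2 and the iteration in Step 3 are sound and match the paper's proof. One small point: the shift functionals work for any $\tau\supseteq\alpha$, so you need not insist on $\tau=\alpha\cat 0^{j}$.

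The gap is exactly where you locate it. You need a $\tau\supseteq\alpha$ with $F(N_{\tau\cat 0})\cap F(N_{\tau\cat 1})=\emptyset$, and your argument for its existence fails. From $W=F(\tau\cat 0\cat Y)=F(\tau\cat 1\cat Y')$ you do get $\Theta_0(W)=F(\tau\cat Y)$ and $\Theta_1(W)=F(\tau\cat Y')$. But $\Theta_0$ and $\Theta_1$ are different functionals, since $\Theta_1$ first performs the translation by $-2^{l}$. So nothing forces $F(\tau\cat Y)=F(\tau\cat Y')$, and the contradiction with (N$'$) never materializes. The fallbacks are not specified either. In particular, the bounded search of the paper's general case itself presupposes a bit of $F$ that is constant, with opposite values, on two cylinders. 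That is precisely what you are missing.

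\textbf{The missing idea is elementary; no search or contradiction is needed.} It uses only (N$'$), continuity and compactness.
\begin{enumerate}
\item Since $F$ is not constant on $N_\alpha$, some bit $j$ makes $f(Z)=F(Z)(j)$ nonconstant on $N_\alpha$.
\item Since $f$ is continuous into $\{0,1\}$ and $\Cantor$ is compact, $f(Z)$ depends only on $Z\res n_0$ for some $n_0$. So there is a longest string $\tau\supseteq\alpha$ on which $f$ is not constant, and its length is less than $n_0$.
\item By maximality, $f$ is constant on $N_{\tau\cat 0}$ and on $N_{\tau\cat 1}$, with different values $b$ and $1-b$. That is, $F(\tau\cat W)(j)=b\oplus W(0)$ for all $W$.
\end{enumerate}
So your bit reader can be taken to be $W\mapsto b\oplus W(j)$, which reads a single bit. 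This $\tau$ extends $\alpha$, so $|\tau|\ge k$ and your Step 2 applies verbatim. Setting $\Theta(W)=\Theta_{b\oplus W(j)}(W)$ and $\Gamma(W)(m)=b\oplus\Theta^{m}(W)(j)$ then completes the proof, exactly as in the paper.
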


\begin{proof}
  The last sentence follows from the first: $\tau\cat X\ltt X$, so
  $X\ltt F(\tau\cat X)\ltt F(X)$ by (M). We prove the first assertion. Fix
  $\rho\in\strings$.

  \medskip
  \noindent\emph{Step 1: uniform relations on an open set.}
  Here $D=\Cantor$ and $D^{*}=\Cantor\times\Cantor$. By (M) and (J$'$), $F$
  has property \eqref{eq:MJ}, so the reductions \eqref{eq:Fargs} hold for
  every pair $(G,H)$. The maps $(G,H)\mapsto F(G+H)$, $F(G)\oplus F(H)$,
  $F(G)$, $F(G+H)\oplus F(-H)$ and $F(G)\oplus F(G+2H)$ are continuous on
  $\Cantor\times\Cantor$, since $F$ and the group operations are. We apply
  Lemma~\ref{lem:unif} three times; each time the space is a nonempty open
  subset of the compact space $\Cantor\times\Cantor$, hence a Baire space.
  First, on the space $N_\rho\times N_\rho$, with $\varphi(G,H)=F(G+H)$ and
  $\psi(G,H)=F(G)\oplus F(H)$, we obtain a nonempty open
  $V_1\subseteq N_\rho\times N_\rho$ and a tt-functional $\Padd$ such that
  \Radd\ holds for all pairs in $V_1$. Second, on the space $V_1$, with
  $\varphi(G,H)=F(G)$ and $\psi(G,H)=F(G+H)\oplus F(-H)$, we obtain a nonempty
  open $V_2\subseteq V_1$ and $\Psub$ such that \Rsub\ holds for all pairs in
  $V_2$. Third, on the space $V_2$, with $\varphi(G,H)=F(G+H)$ and
  $\psi(G,H)=F(G)\oplus F(G+2H)$, we obtain a nonempty open
  $V_3\subseteq V_2$ and $\Phalf$ such that \Rhalf\ holds for all pairs in
  $V_3$. Choose $k$ and $\alpha,\beta\in\strings$ with
  $|\alpha|=|\beta|=k$ and $N_\alpha\times N_\beta\subseteq V_3$. Then
  $\alpha\supseteq\rho$, and
  \begin{equation}\label{eq:E}
    \text{every pair in }E:=N_\alpha\times N_\beta
    \text{ satisfies \Radd, \Rsub\ and \Rhalf.}
  \end{equation}
  By Lemma~\ref{lem:2adic}\eqref{2a:sub}, $E$ is invariant under translation by
  elements of $N_{0^{k}}\times N_{0^{k}}$.

  \medskip
  \noindent\emph{Step 2: a splitting bit.}
  By (N$'$), $F$ is not constant on $N_\alpha$, so there is $j\in\omega$ such
  that the function $f\colon\Cantor\to\{0,1\}$, $f(Z)=F(Z)(j)$, is not constant
  on $N_\alpha$. Since $f$ is continuous and $\Cantor$ is compact, there is
  $n_0$ such that $f(Z)$ depends only on $Z\res n_0$. Thus $f$ is constant on
  $N_\nu$ whenever $|\nu|\ge n_0$. Let $\tau\supseteq\alpha$ be a string of maximal
  length such that $f$ is not constant on $N_\tau$; such a string exists
  because $\alpha$ has this property and every string with this property has
  length less than $n_0$. By maximality, $f$ is constant on $N_{\tau\cat 0}$ and
  on $N_{\tau\cat 1}$, and the two constant values are different since $f$ is
  not constant on $N_\tau=N_{\tau\cat 0}\cup N_{\tau\cat 1}$. Let $l=|\tau|$,
  so $l\ge k$, and let $b$ be the value of $f$ on $N_{\tau\cat 0}$. Then
  \begin{equation}\label{eq:read}
    F(\tau\cat W)(j)=b\oplus W(0)\qquad\text{for all }W\in\Cantor .
  \end{equation}

  \medskip
  \noindent\emph{Step 3: the shift functional.}
  Let
  \[
    G=\tau\cat 0^{\omega},\qquad H=\beta\cat 0^{\omega},\qquad Y=2^{l}
    =0^{l}\cat 1\cat 0^{\omega}.
  \]
  These are computable, and so are $H-Y$, $-H$ and $G+2H$. By (C) the reals
  \[
    P_1=F(H-Y),\qquad P_2=F(-H),\qquad P_3=F(G+2H)
  \]
  are computable. By Lemma~\ref{lem:2adic}\eqref{2a:prefix},
  $G+0^{l}\cat W=\tau\cat W$ for every $W$.

  We claim that there is a tt-functional $\Theta$ such that
  \begin{equation}\label{eq:shift}
    \Theta\bigl(F(\tau\cat W)\bigr)=F(\tau\cat S(W))
    \qquad\text{for all }W\in\Cantor .
  \end{equation}
  Define, for $A\in\Cantor$,
  \[
    A'=\begin{cases}
      A & \text{if }A(j)=b,\\
      \Psub\bigl(\Padd(A\oplus P_1)\oplus P_2\bigr) & \text{if }A(j)\ne b,
    \end{cases}
  \]
  and
  \[
    \Theta(A)=\Psub\bigl(\Phalf(A'\oplus P_3)\oplus P_2\bigr).
  \]
  Then $\Theta$ is a tt-functional, since $P_1,P_2,P_3$ are computable. To
  verify \eqref{eq:shift}, fix $W$, let $A=F(\tau\cat W)$ and
  \[
    T=0^{l}\cat W,\qquad T'=0^{l+1}\cat S(W).
  \]
  So $T,T'\in N_{0^{l}}\subseteq N_{0^{k}}$, $A=F(G+T)$, $T'\in N_{0^{l+1}}$
  and $T'/2=0^{l}\cat S(W)\in N_{0^l}$. If $W(0)=0$ then $T'=T$; if $W(0)=1$
  then $T=T'+Y$ (adding $2^{l}$ to $T'$ changes its bit $l$ from $0$ to $1$,
  with no carry), that is, $T'=T-Y$.

  We first show that $A'=F(G+T')$. If $W(0)=0$ then $A(j)=b$ by
  \eqref{eq:read}, so $A'=A=F(G+T)=F(G+T')$. If $W(0)=1$ then $A(j)\neq b$ by
  \eqref{eq:read}, so $A'=\Psub(\Padd(F(G+T)\oplus F(H-Y))\oplus F(-H))$. We
  apply Lemma~\ref{lem:trans}\eqref{tr:trans} with $\kappa=-Y$: the pairs
  $(G+T,H-Y)$ and $(G+T-Y,H)$ belong to $E=N_\alpha\times N_\beta$, because
  $G\in N_\alpha$, $H\in N_\beta$ and $T,\,-Y,\,T-Y\in N_{0^{k}}$. Hence
  $A'=F(G+T-Y)=F(G+T')$.

  Next we apply Lemma~\ref{lem:trans}\eqref{tr:half} to $T'$: the pairs
  $(G+T',H-T'/2)$ and $(G+T'/2,H)$ belong to $E$, because $T'$ and $T'/2$ lie
  in $N_{0^{k}}$. Hence
  \[
    \Theta(A)=\Psub\bigl(\Phalf(F(G+T')\oplus F(G+2H))\oplus F(-H)\bigr)
    =F(G+T'/2)=F(\tau\cat S(W)),
  \]
  which is \eqref{eq:shift}.

  \medskip
  \noindent\emph{Step 4: decoding.}
  Define $\Gamma\colon\Cantor\to\Cantor$ by
  $\Gamma(A)(m)=b\oplus\Theta^{m}(A)(j)$. This is a tt-functional. Let
  $X\in\Cantor$. By \eqref{eq:shift} and induction on $m$,
  $\Theta^{m}(F(\tau\cat X))=F(\tau\cat S^{m}(X))$, so by \eqref{eq:read},
  \[
    \Gamma\bigl(F(\tau\cat X)\bigr)(m)=b\oplus F(\tau\cat S^{m}(X))(j)
    =b\oplus b\oplus S^{m}(X)(0)=X(m).\qedhere
  \]
\end{proof}

\begin{remark}\label{rem:dichotomy}
  Hypothesis (N$'$) can be replaced by a dichotomy. Let $F$ be continuous and
  satisfy (M), (J$'$) and (C). If (N$'$) fails, say $F$ is constant on
  $N_\sigma$ with value $c$, then $c=F(\sigma\cat 0^{\omega})$ is computable by
  (C), and for every $X$ we have $X\ltt\sigma\cat X$, so $F(X)\ltt
  F(\sigma\cat X)=c$ by (M). Thus either $F(X)$ is computable for every $X$,
  or $X\ltt F(X)$ for every $X$.
\end{remark}

\begin{proposition}\label{prop:inverse}
  Suppose that a continuous function $F$ induces an automorphism $\pi$ of
  $\Dtt$. Then $\pi^{-1}$ is induced by a tt-functional.
\end{proposition}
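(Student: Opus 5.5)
The plan is to combine Proposition~\ref{prop:lower} with (M) and the surjectivity of $\pi$. By Lemma~\ref{lem:basic}, $F$ satisfies (M), (J) (hence (J$'$)), (C) and (N) (hence (N$'$)). So Proposition~\ref{prop:lower} applies, with $\rho$ the empty string: we obtain a string $\tau$ and a tt-functional $\Gamma$ with $\Gamma(F(\tau\cat X))=X$ for all $X\in\Cantor$. We will show that $\Psi(Z):=\Gamma(F(\tau\cat Z))$ is not the candidate. Rather, the candidate for inducing $\pi^{-1}$ is $\Lambda(Z)=\Gamma(Z')$, where $Z'$ is obtained from $Z$ uniformly by a tt-functional. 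We need $Z'$ to be of the form $F(\tau\cat X)$ with $\dg(X)=\pi^{-1}(\dg(Z))$.

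First we show that $\pi^{-1}(\mathbf z)\le\mathbf z$ for all $\mathbf z$. Given $Z$, pick $X$ with $\dg(F(X))=\dg(Z)$; this is possible by surjectivity of $\pi$. By the last sentence of Proposition~\ref{prop:lower}, $X\ltt F(X)\ett Z$. So every $Z$ computes (tt) a representative of $\pi^{-1}(\dg(Z))$. The issue is uniformity: a single tt-functional must work for every $Z$.

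To get uniformity, we use that $\tau\cat X\ett X$. Hence $\dg(F(\tau\cat X))=\pi(\dg(X))$, so $F(\tau\cat\cdot)$ also induces $\pi$, and $\Gamma$ is a left inverse of it. For arbitrary $Z$, let $X_Z$ with $\dg(X_Z)=\pi^{-1}(\dg(Z))$. Then $F(\tau\cat X_Z)\ett Z$, but the reduction from $Z$ to $F(\tau\cat X_Z)$ is not given uniformly, so applying $\Gamma$ directly to $Z$ fails. Instead, I would try to show that $\Gamma$ itself induces $\pi^{-1}$, i.e.\ $\Gamma(Z)\ett X_Z$ for all $Z$.

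One direction comes from (M) applied to $\pi^{-1}$ abstractly. Write $Z=\Phi(F(\tau\cat X_Z))$ and $F(\tau\cat X_Z)=\Phi'(Z)$. Then $\Gamma(Z)$ relates to $X_Z=\Gamma(\Phi'(Z))$ only if $\Gamma$ respects the reductions, and that is not automatic. So the main obstacle is exactly this uniformity, and this is where a Baire category argument should enter. Consider the continuous map $Z\mapsto Z$ and the family of sets $C_\Phi=\{Z:\Phi(Z)\text{ represents }\pi^{-1}(\dg(Z))\}$, which cover $\Cantor$. These sets are not obviously closed. However, the set of pairs $\{(Z,X):F(\tau\cat X)=\Phi(Z),\ Z=\Phi''(F(\tau\cat X))\}$ is closed. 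I would therefore apply Lemma~\ref{lem:unif} on the space $\Cantor$ of $X$'s, with $\varphi(X)=X$ and $\psi(X)=F(\tau\cat X)$, to get an open $N_\sigma$ on which matters are uniform. I would then transfer from $N_\sigma$ to all $Z$ via a join trick: for any $Z$, form $F(\tau\cat\sigma'\cat X)$-type codes using (J), so that $\pi^{-1}(\dg(Z))$ is read off by a fixed tt-functional composed with $\Gamma$ and the computable reals given by (C). I expect this transfer step, from an open set of $X$ to all $Z$ in the range, to be the hardest part. It may require using that the range of $F$ on $N_\sigma$ meets every degree, via $X\mapsto\sigma\cat X$.
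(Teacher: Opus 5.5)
There is a genuine gap. You correctly isolate the obstacle: Proposition~\ref{prop:lower} and surjectivity give $\pi^{-1}(\mathbf z)\le\mathbf z$ pointwise, but you need one tt-functional that works for all $Z$. The proposal never overcomes it, because the Baire category argument is placed in the wrong space. Applying Lemma~\ref{lem:unif} on the space of $X$'s with $\varphi(X)=X$ and $\psi(X)=F(\tau\cat X)$ yields nothing new, since $\Gamma$ already satisfies $X=\Gamma(F(\tau\cat X))$ for \emph{all} $X$. The non-uniformity lies entirely in the passage from an arbitrary $Z$ to some $F(\tau\cat X_Z)$ (or to $X_Z$), and that is a map defined on the space of $Z$'s. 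An open set of $X$'s tells you nothing about arbitrary $Z$. The ``join trick'' with (J) and (C) is not specified, and it does not supply this map.

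The missing idea is to run the category argument over $Z$, after adding the witnessing reductions as indices so that the sets become closed. For each $Z$ pick $X$ with $F(X)\ett Z$. Then $X\ltt F(X)\ltt Z$, so there are tt-functionals $\Phi,\Lambda,\Lambda'$ with $X=\Phi(Z)$, $\Lambda(F(\Phi(Z)))=Z$ and $\Lambda'(Z)=F(\Phi(Z))$. Let $C_{\Phi,\Lambda,\Lambda'}$ be the set of $Z$ satisfying the last two equations. These sets are closed, by continuity of $F$ and of tt-functionals, and they cover $\Cantor$. So one of them contains some $N_\sigma$, and $Z\mapsto\Phi(\sigma\cat Z)$ induces $\pi^{-1}$ because $\sigma\cat Z\ett Z$. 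The transfer from $N_\sigma$ to all of $\Cantor$ is therefore trivial, not the hardest step.

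Your own closed set of pairs $\{(Z,X):F(\tau\cat X)=\Phi(Z),\ Z=\Phi''(F(\tau\cat X))\}$ would also work. Project it to the $Z$-coordinate; the projection is closed by compactness. Then apply Baire category there: on the resulting $N_\sigma$, the map $Z\mapsto\Gamma(\Phi(\sigma\cat Z))$ is the required tt-functional.
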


\begin{proof}
  By Lemma~\ref{lem:basic}, $F$ satisfies the hypotheses of
  Proposition~\ref{prop:lower}, so $X\ltt F(X)$ for all $X$.

  Let $Z\in\Cantor$. Since $\pi$ is onto, there is $X$ with
  $\dg(X)=\pi^{-1}(\dg(Z))$, that is, $F(X)\ett Z$. Then $X\ltt F(X)\ltt Z$.
  Hence there are tt-functionals $\Phi,\Lambda,\Lambda'$ with $X=\Phi(Z)$,
  $\Lambda(F(\Phi(Z)))=Z$ and $\Lambda'(Z)=F(\Phi(Z))$. In other words,
  $\Cantor=\bigcup_{\Phi,\Lambda,\Lambda'}C_{\Phi,\Lambda,\Lambda'}$, where
  \[
    C_{\Phi,\Lambda,\Lambda'}=\bigl\{Z\in\Cantor:\Lambda(F(\Phi(Z)))=Z
    \text{ and }\Lambda'(Z)=F(\Phi(Z))\bigr\}.
  \]
  Each of these countably many sets is closed, because $F$ and the
  tt-functionals are continuous. By the Baire category theorem there are
  $\Phi,\Lambda,\Lambda'$ and $\sigma\in\strings$ with
  $N_\sigma\subseteq C_{\Phi,\Lambda,\Lambda'}$. Define
  $\widetilde F(Z)=\Phi(\sigma\cat Z)$. This is a tt-functional, and for every
  $Z$ we have $F(\widetilde F(Z))\ett\sigma\cat Z\ett Z$, as witnessed by
  $\Lambda$ and $\Lambda'$. Thus $\pi(\dg(\widetilde F(Z)))=\dg(Z)$, i.e.\
  $\dg(\widetilde F(Z))=\pi^{-1}(\dg(Z))$.
\end{proof}

\begin{proof}[Proof of Theorem~\ref{thm:cts}]
  Let $\widetilde F$ be a tt-functional inducing $\pi^{-1}$, by
  Proposition~\ref{prop:inverse}. Then $\widetilde F(Z)\ltt Z$ for all $Z$,
  trivially. On the other hand $\widetilde F$ is continuous and induces the
  automorphism $\pi^{-1}$, so Lemma~\ref{lem:basic} and
  Proposition~\ref{prop:lower} apply to $\widetilde F$ and give
  $Z\ltt\widetilde F(Z)$ for all $Z$. Hence $\widetilde F(Z)\ett Z$ for all
  $Z$, so $\pi^{-1}$ is the identity, and so is $\pi$. Thus $F(X)\ett X$ for
  all $X$.

  For the ``moreover'' part, let $O$ be a nonempty open set and choose
  $\rho$ with $N_\rho\subseteq O$. Since $F(X)\ltt X$ for all $X$,
  Lemma~\ref{lem:unif}, applied on the space $N_\rho$ with $\varphi=F$ and
  $\psi$ the identity, gives $\rho'\supseteq\rho$ and a tt-functional $\Lambda$ with $F=\Lambda$ on
  $N_{\rho'}$. By Proposition~\ref{prop:lower} applied to $\rho'$ there are
  $\tau\supseteq\rho'$ and a tt-functional $\Gamma_0$ with
  $\Gamma_0(F(\tau\cat X))=X$ for all $X$. Let
  $\Gamma(A)=\tau\cat\Gamma_0(A)$. Then $F(Z)=\Lambda(Z)$ and $\Gamma(F(Z))=Z$
  for all $Z\in N_\tau$.
\end{proof}

\begin{remark}
  In Theorem~\ref{thm:cts} the function $F$ is not assumed to be injective or
  surjective, and need not be either. The shift $S$ is continuous, induces the
  identity on $\Dtt$, and is two-to-one; the map $X\mapsto 0\cat X$ is
  continuous, induces the identity on $\Dtt$, and is not surjective. What the
  theorem does give is local injectivity on a dense open set: every nonempty
  open set contains a basic open set $N_\tau$ on which $F$ is injective, with
  an inverse $F[N_\tau]\to N_\tau$ that is the restriction of a tt-functional.
\end{remark}

\begin{corollary}\label{cor:perm}
  Let $h\colon\omega\to\omega$ be a permutation and let
  $F_h\colon\Cantor\to\Cantor$ be the homeomorphism $F_h(X)=X\circ h$. Then
  $F_h$ induces an automorphism of $\Dtt$ if and only if $h$ is computable, and
  in that case the automorphism is the identity.
\end{corollary}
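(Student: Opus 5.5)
Both directions are short, and the second half of the statement is Theorem~\ref{thm:cts} applied to $F_h$. Note first that $F_h$ is a homeomorphism of $\Cantor$ with inverse $F_{h^{-1}}$, since $(X\circ h)\circ h^{-1}=X$.

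For the ``if'' direction, suppose $h$ is computable. Then $h^{-1}$ is computable as well, because $h^{-1}(n)$ is found by searching for the unique $m$ with $h(m)=n$, and this search always halts. The map $X\mapsto X\circ h$ is a tt-functional: bit $n$ of the output is the single oracle query $X(h(n))$. Likewise $Y\mapsto Y\circ h^{-1}$ is a tt-functional. Hence $F_h(X)\ett X$ for all $X$, so $F_h$ maps each truth-table degree into itself. It therefore induces the identity map on $\Dtt$, which is an automorphism.

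For the ``only if'' direction, suppose $F_h$ induces an automorphism $\pi$. Since $F_h$ is continuous, Theorem~\ref{thm:cts} gives $\pi=\mathrm{id}$; this also settles the final clause. In particular $F_h(X)\ett X$ for every $X$. To extract computability of $h$, I would use the ``moreover'' part of Theorem~\ref{thm:cts}. It gives a string $\tau$ and a tt-functional $\Lambda$ with $X\circ h=\Lambda(X)$ for all $X\in N_\tau$. Let $l=|\tau|$.

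Fix $n$ with $h(n)\ge l$, and consider $X=\tau\cat 0^\omega$ together with its modification at a single position $m\ge l$. For each such $m$, compute $\Lambda$ on both reals and check whether output bit $n$ differs. Bit $n$ of $X\circ h$ is $X(h(n))$, so it changes exactly when $m=h(n)$. Thus $h(n)$ is the unique $m\ge l$ at which flipping changes $\Lambda(\cdot)(n)$, and searching over $m$ finds it effectively. The search halts because such an $m$ exists whenever $h(n)\ge l$.

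Only finitely many $n$ have $h(n)<l$, namely the preimages of $0,\dots,l-1$, and their values can be hard-coded. This gives a procedure that, for $n$ outside a finite set, searches for the unique $m\ge l$ at which flipping $X(m)$ changes bit $n$ of $\Lambda(X)$, and returns the hard-coded value otherwise. So $h$ is computable.

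The only delicate point is that $\Lambda$ agrees with $F_h$ only on $N_\tau$, which is why the flipped bits are restricted to positions $m\ge l$. Finiteness of the exceptional set is immediate because $h$ is a bijection.
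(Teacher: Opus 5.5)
Your proof is correct and follows the paper's: both use the ``moreover'' part of Theorem~\ref{thm:cts} to get $\tau$ and a tt-functional $\Lambda$ agreeing with $F_h$ on $N_\tau$, hard-code the finitely many $n$ with $h(n)<|\tau|$, and find $h(n)$ by flipping single bits of $\tau\cat 0^{\omega}$ at positions $\ge|\tau|$. The only difference is that the paper restricts the search to the finitely many positions queried in computing $\Lambda(\tau\cat 0^{\omega})(n)$, whereas your unbounded search is also fine because it always halts.
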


\begin{proof}
  If $h$ is computable then so is $h^{-1}$, and $X\circ h\ett X$ for all $X$.
  Conversely, suppose that $F_h$ induces an automorphism. By
  Theorem~\ref{thm:cts} there are $\tau\in\strings$ and a tt-functional
  $\Lambda$ with $\Lambda(Z)(n)=Z(h(n))$ for all $Z\in N_\tau$ and all $n$.
  Only finitely many $n$ have $h(n)<|\tau|$. For any other $n$, we compute
  $h(n)$ as follows: run the computation of $\Lambda(\tau\cat 0^{\omega})(n)$,
  let $Q$ be the finite set of positions $q\ge|\tau|$ of the oracle that are
  queried, and for each $q\in Q$ compute $\Lambda(Z_q)(n)$, where $Z_q$ is
  $\tau\cat 0^{\omega}$ with bit $q$ changed to $1$. Since
  $\Lambda(Z)(n)=Z(h(n))$ on $N_\tau$, the value $\Lambda(Z_q)(n)$ is $1$ if
  $q=h(n)$ and $0$ otherwise; and $h(n)\in Q$, since otherwise the computation
  of $\Lambda(\tau\cat 0^{\omega})(n)$ would give the same answer $0$ on the
  oracle $Z_{h(n)}$, whereas $\Lambda(Z_{h(n)})(n)=1$. So $h$ is computable.
\end{proof}

Corollary~\ref{cor:perm} is the truth-table analogue of the main results of
\cite{KH1} for the Turing degrees.

\section{The general case}\label{sec:general}

We now prove Theorem~\ref{thm:main}. The core is a coding lemma
(Proposition~\ref{prop:coding}), the truth-table analogue of
\cite[Lemma~0.2]{Mar}. In place of arithmetic Cohen generics we use comeager
sets, and in place of ``forcing equals truth'' we use Lemma~\ref{lem:unif}.

Throughout Sections~\ref{ss:E}--\ref{ss:inverse} we fix the following data:
\begin{equation}\label{eq:data}
  \parbox{0.82\textwidth}{$\pi$ is an automorphism of $\Dtt$, $D\subseteq\Cantor$
  is a dense $G_\delta$ set, and $F\colon D\to\Cantor$ is a continuous function
  which induces $\pi$ on $D$.}
\end{equation}
The set $D^{*}$ and the relations \Radd, \Rsub, \Rhalf\ are as in
Section~\ref{sec:prelim}.

\subsection{Uniform relations on a comeager set}\label{ss:E}

\begin{lemma}\label{lem:E}
  Assume \eqref{eq:data}. Then there are $k\in\omega$, strings $\alpha,\beta$
  of length $k$, tt-functionals $\Padd,\Psub,\Phalf$ and a set
  $E\subseteq D^{*}\cap(N_\alpha\times N_\beta)$ such that
  \begin{enumerate}
  \item $E$ is comeager in $N_\alpha\times N_\beta$;
  \item every pair in $E$ satisfies \Radd, \Rsub\ and \Rhalf.
  \end{enumerate}
\end{lemma}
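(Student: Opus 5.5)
The plan is to mimic Step~1 of the proof of Proposition~\ref{prop:lower}, working on the Polish space $D^{*}$ in place of $\Cantor\times\Cantor$.

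\emph{The space $D^{*}$.} The maps $(G,H)\mapsto G$, $H$, $G+H$, $-H$, $G+2H$ are continuous open surjections $\Cantor\times\Cantor\to\Cantor$; each is a projection composed with a homeomorphism of $\Cantor\times\Cantor$. Hence the preimage of the dense $G_\delta$ set $D$ under each of them is a dense $G_\delta$ subset of $\Cantor\times\Cantor$. For the preimage of a comeager set under a continuous open map one may also cite Kuratowski--Ulam; in fact each map is a homeomorphism followed by a coordinate projection, and preimages of dense $G_\delta$ sets under projections are dense $G_\delta$. So $D^{*}$, a finite intersection of such sets, is a dense $G_\delta$ in $\Cantor\times\Cantor$ and therefore Polish, and so is every nonempty relatively open subset of it. On $D^{*}$ the five maps $(G,H)\mapsto F(G)$, $F(H)$, $F(G+H)$, $F(-H)$, $F(G+2H)$ are continuous, since $F$ is continuous on $D$ and the group operations are continuous. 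Since $F$ induces $\pi$ on $D$, property \eqref{eq:MJ} holds by Lemma~\ref{lem:basicD}\eqref{bd:mj}, and so the three reductions \eqref{eq:Fargs} hold for every pair in $D^{*}$.

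\emph{Three applications of uniformization.} First apply Lemma~\ref{lem:unif} on the Baire space $D^{*}$ with $\varphi(G,H)=F(G+H)$ and $\psi(G,H)=F(G)\oplus F(H)$. This gives a nonempty relatively open $V_1\subseteq D^{*}$ and $\Padd$ such that \Radd\ holds on $V_1$. Next apply it on $V_1$, which is Polish, to get $V_2\subseteq V_1$ and $\Psub$ with \Rsub\ holding on $V_2$. Then apply it on $V_2$ to get $V_3\subseteq V_2$ and $\Phalf$ with \Rhalf\ holding on $V_3$. Since $V_3$ is nonempty and relatively open in $D^{*}$, we can write $V_3=O\cap D^{*}$ with $O$ open in $\Cantor\times\Cantor$. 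Choose $k$ and strings $\alpha,\beta$ of length $k$ with $N_\alpha\times N_\beta\subseteq O$; this is possible because basic rectangles can be refined to equal length. Put $E=D^{*}\cap(N_\alpha\times N_\beta)\subseteq V_3$. Every pair in $E$ then satisfies all three relations, and $E$ is comeager in $N_\alpha\times N_\beta$ because $D^{*}$ is comeager in $\Cantor\times\Cantor$.

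\emph{Main obstacle.} The only delicate point is the first step: one must check that $D^{*}$ is comeager and Polish, so that Lemma~\ref{lem:unif} applies and each relatively open subset is again a Baire space. The rest repeats the continuous case verbatim. Comeagerness of $E$ within the rectangle comes for free, because $E$ is a relatively open piece of a dense $G_\delta$ set rather than merely a nonmeager set.
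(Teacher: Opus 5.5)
Your proposal is correct and follows the paper's proof essentially step for step. Both show that $D^{*}$ is a dense $G_\delta$, using the open-map/homeomorphism-plus-projection argument, and both get the reductions \eqref{eq:Fargs} from Lemma~\ref{lem:basicD}\eqref{bd:mj}. Both then apply Lemma~\ref{lem:unif} three times on nested relatively open subsets, and finally take $E=D^{*}\cap(N_\alpha\times N_\beta)$ for a rectangle inside the open set $O$ with $V_3=O\cap D^{*}$.
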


\begin{proof}
  Each of the five maps $(G,H)\mapsto G$, $H$, $G+H$, $-H$, $G+2H$ from
  $\Cantor\times\Cantor$ to $\Cantor$ is continuous and open: each is the
  composition of a homeomorphism of $\Cantor\times\Cantor$ (respectively the
  identity, $(G,H)\mapsto(H,G)$, $(G,H)\mapsto(G+H,H)$,
  $(G,H)\mapsto(-H,G)$, $(G,H)\mapsto(G+2H,H)$) with the projection to the
  first coordinate. The preimage of a dense open set under a continuous open
  map is dense and open, so the preimage of the dense $G_\delta$ set $D$ under
  each of the five maps is a dense $G_\delta$. Hence $D^{*}$ is a dense
  $G_\delta$ subset of $\Cantor\times\Cantor$; in particular it is a Polish
  space, and so are its nonempty relatively open subsets.

  On $D^{*}$, the maps $(G,H)\mapsto F(G)$, $F(H)$, $F(G+H)$, $F(-H)$,
  $F(G+2H)$ are continuous, being compositions of continuous maps into $D$ with
  $F$. By Lemma~\ref{lem:basicD}\eqref{bd:mj}, $F$ has property \eqref{eq:MJ},
  so the reductions \eqref{eq:Fargs} hold for all pairs in $D^{*}$. As in
  Step~1 of the proof of Proposition~\ref{prop:lower}, three applications of
  Lemma~\ref{lem:unif}, on the Baire space $D^{*}$ and then on nonempty
  relatively open subsets of it, give nonempty relatively open sets
  $V_3\subseteq V_2\subseteq V_1\subseteq D^{*}$ and tt-functionals
  $\Padd,\Psub,\Phalf$ such that all pairs in $V_3$ satisfy \Radd, \Rsub\ and
  \Rhalf. Write $V_3=D^{*}\cap O$ with $O$ open in $\Cantor\times\Cantor$, and
  choose $k$ and $\alpha,\beta$ of length $k$ with
  $N_\alpha\times N_\beta\subseteq O$ (note that $O\neq\emptyset$). Let
  $E=D^{*}\cap(N_\alpha\times N_\beta)$. Then $E\subseteq V_3$, and $E$ is
  comeager in $N_\alpha\times N_\beta$ because $D^{*}$ is comeager in
  $\Cantor\times\Cantor$.
\end{proof}

\subsection{A splitting bit}

\begin{lemma}\label{lem:split}
  Assume \eqref{eq:data} and let $\alpha\in\strings$. Then there are
  $j\in\omega$, $b,i\in\{0,1\}$, a string $\sigma\supseteq\alpha$ of some
  length $l$, a number $d\ge 1$ and a string $q_1\supseteq\sigma\cat(1-i)$ of
  length $l+d$ such that
  \begin{align*}
    F(Z)(j)&=b &&\text{for all }Z\in D\cap N_{\sigma\cat i},\\
    F(Z)(j)&=1-b &&\text{for all }Z\in D\cap N_{q_1}.
  \end{align*}
\end{lemma}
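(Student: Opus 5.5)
The plan is to imitate Step~2 of the proof of Proposition~\ref{prop:lower}, replacing compactness by the Baire category theorem and continuity of $F$ on the dense $G_\delta$ set $D$.

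First, I obtain a splitting bit. By Lemma~\ref{lem:basicD}\eqref{bd:inj}, $F$ is not constant on the uncountable set $D\cap N_\alpha$. So there is $j$ such that $f(Z)=F(Z)(j)$ takes both values on $D\cap N_\alpha$. Since $f$ is continuous on $D$ with values in $\{0,1\}$, the sets $D_e=\{Z\in D:f(Z)=e\}$ are relatively clopen in $D$. Choose $Z_0\in D_0\cap N_\alpha$ and $Z_1\in D_1\cap N_\alpha$. By relative openness there are strings $\mu_e\supseteq\alpha$ with $Z_e\in N_{\mu_e}$ and $f\equiv e$ on $D\cap N_{\mu_e}$. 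The two strings $\mu_0,\mu_1$ are incomparable: otherwise one cylinder contains the other, and the smaller cylinder meets $D$ by density, giving a point where $f$ takes both values.

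Next, I locate the branching. Let $\sigma$ be the longest common initial segment of $\mu_0$ and $\mu_1$, and let $l=|\sigma|$. Then $\sigma\supseteq\alpha$, and for some $i$ we have $\mu_{e}\supseteq\sigma\cat i$ and $\mu_{1-e}\supseteq\sigma\cat(1-i)$. This is not yet the statement, because we need $f$ to be constant on all of $D\cap N_{\sigma\cat i}$, not just on $D\cap N_{\mu_e}$.

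To fix this, I re-choose $\sigma$ maximally, as in Step~2.

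\begin{itemize}
\item[(a)] Call a string $\nu$ \emph{splitting} if $f$ is nonconstant on $D\cap N_\nu$. Start from $\nu_0=\alpha$.
\item[(b)] Given a splitting $\nu_n$, if both $\nu_n\cat 0$ and $\nu_n\cat 1$ are splitting, go to $\nu_n\cat 0$.
\item[(c)] If the descent reaches a $\sigma$ where some child $\sigma\cat i$ is non-splitting, then $f$ is constant on $D\cap N_{\sigma\cat i}$; call the value $b$. Since $\sigma$ is splitting, some $Z\in D\cap N_{\sigma\cat(1-i)}$ has $f(Z)=1-b$. By relative clopenness there is $q_1\supseteq\sigma\cat(1-i)$ with $f\equiv 1-b$ on $D\cap N_{q_1}$. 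Lengthening $q_1$ if needed, write $|q_1|=l+d$ with $d\ge1$. This gives the lemma.
\end{itemize}

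The main obstacle is that the descent in (b) might never stop. Then every string along the path $R=\bigcup\nu_n$ is splitting, so every neighbourhood of $R$ meets both $D_0$ and $D_1$. This is harmless if $R\notin D$, but no contradiction arises. I therefore avoid the infinite descent by choosing it differently. At a stage where both children are splitting, I go instead toward a fixed point $Z_0\in D$. Take $Z_0\in D_0\cap N_\alpha$ and $\mu_0$ as above, with $f\equiv 0$ on $D\cap N_{\mu_0}$.

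Now follow the path of $Z_0$ downward from $\alpha$, that is, the strings $Z_0\res m$ for $|\alpha|\le m\le|\mu_0|$. The string $Z_0\res|\alpha|=\alpha$ is splitting, and $\mu_0=Z_0\res|\mu_0|$ is not. Let $\sigma=Z_0\res l$ be the longest splitting string on this path, and let $i=Z_0(l)$. Then $\sigma\cat i$ is non-splitting, and $f$ is constant on $D\cap N_{\sigma\cat i}$. This set is nonempty by density of $D$, and its value is $b=0$ because $Z_0$ lies in it. Step (c) then finishes the proof. The descent is finite because the path is cut off at $\mu_0$. The remaining care is only in checking nonemptiness via density of $D$.
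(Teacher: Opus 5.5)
Your final argument is correct and is essentially the paper's proof. Both fix $Z_0\in D\cap N_\alpha$ and take the first decided initial segment $\sigma\cat i$ of $Z_0$; this is the same as taking the last splitting one, since undecided strings are closed under prefixes. Both then use continuity of $f$ at a point of $D\cap N_{\sigma\cat(1-i)}$ where $f$ takes the other value, which yields $q_1$. The incomparable strings $\mu_0,\mu_1$ and the descent in (a)--(b) can simply be dropped.
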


\begin{proof}
  The set $D\cap N_\alpha$ is comeager in $N_\alpha$, hence uncountable, so by
  Lemma~\ref{lem:basicD}\eqref{bd:inj}, $F$ is not constant on it. Fix $j$ such
  that $f(Z)=F(Z)(j)$ is not constant on $D\cap N_\alpha$. Call a string $\nu$
  \emph{decided} if $f$ is constant on $D\cap N_\nu$. Every extension of a
  decided string is decided, and $\alpha$ is not decided; hence no initial
  segment of $\alpha$ is decided.

  Fix $Z_0\in D\cap N_\alpha$. Since $f$ is continuous at $Z_0$ (as a function
  on $D$) and takes values in a discrete space, some initial segment of $Z_0$
  is decided. Let $q_0$ be the shortest decided initial segment of $Z_0$. Then
  $|q_0|>|\alpha|$, so $q_0=\sigma\cat i$ where $\sigma\supseteq\alpha$ is not
  decided and $i\in\{0,1\}$. Let $l=|\sigma|$ and let $b$ be the constant value
  of $f$ on $D\cap N_{q_0}$ (this set is nonempty, as it contains $Z_0$).

  Since $\sigma$ is not decided, there is $Z_1\in D\cap N_\sigma$ with
  $f(Z_1)=1-b$. Then $Z_1\notin N_{q_0}$, so $Z_1\supseteq\sigma\cat(1-i)$. By
  continuity of $f$ at $Z_1$ there is an initial segment $q_1$ of $Z_1$, which
  we may take of length greater than $l$, such that $f$ has the constant value
  $1-b$ on $D\cap N_{q_1}$. Let $d=|q_1|-l$.
\end{proof}

\subsection{The coding lemma}

\begin{proposition}[Coding lemma]\label{prop:coding}
  Assume \eqref{eq:data}. Then there is a nonempty open set
  $U\subseteq\Cantor\times\Cantor$ with the following property. For every
  $X\in\Cantor$ there is a set $\mathcal{G}_X\subseteq U\cap(D\times D)$,
  comeager in $U$, such that for every $A\in\Cantor$ with
  $\dg(A)=\pi(\dg(X))$,
  \[
    X\ltt A\oplus F(G)\oplus F(H)\qquad\text{for all }(G,H)\in\mathcal{G}_X.
  \]
\end{proposition}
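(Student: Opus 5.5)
The plan is to follow the proof of Proposition~\ref{prop:lower}, with two changes. Every fixed computable parameter is replaced by a real tt-reducible to $F(G)\oplus F(H)$. The tt-reduction is allowed to depend on $(G,H)$, which the statement permits, since it only asks for $X\ltt A\oplus F(G)\oplus F(H)$. Concretely, take $k,\alpha,\beta,\Padd,\Psub,\Phalf,E$ from Lemma~\ref{lem:E}, and apply Lemma~\ref{lem:split} to $\alpha$ to get $j,b,i,\sigma$ (of length $l\ge k$), $d$ and $q_1$. Put $U=N_\sigma\times N_\beta$. Since $N_\sigma\subseteq N_\alpha$, the set $E\cap U$ is comeager in $U$, and by Lemma~\ref{lem:2adic}\eqref{2a:sub} it is invariant under translations by $N_{0^k}\times N_{0^k}$ up to the comeager-set bookkeeping below.

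Fix $X$. I would let $\mathcal G_X$ consist of those $(G,H)\in E\cap U$ such that, for the countably many reals $T$ actually used below, all the relevant translated pairs lie in $E$. These are pairs such as $(G+T,H+\kappa)$, $(G+T+\kappa,H)$, $(G+T',H-T'/2)$ and $(G+T'/2,H)$. Here $T$ ranges over reals of the form $0^l\cat S^m(X)$, their variants $0^{l+1}\cat S^{m+1}(X)$, and these plus $n2^l$ for $n<2^d$; the $\kappa$ range over the finitely many integers $\pm 2^l$ and $n2^l$. Each condition has the form ``$(G,H)+(\text{fixed vector in }N_{0^k}\times N_{0^k})\in E$'' (or uses $-H$ and $G+2H$, which lie in $D$ by $E\subseteq D^*$). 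Translation is a homeomorphism of $U$ onto itself preserving comeagerness, so $\mathcal G_X$ is a countable intersection of comeager subsets of $U$ and is comeager. Then all the arguments of $F$ appearing below lie in $D$, and Lemma~\ref{lem:trans} applies to each of them.

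Now fix $(G,H)\in\mathcal G_X$ and $A$ with $\dg(A)=\pi(\dg(X))$. The first step is that $G+0^l\cat X\ltt X\oplus G$. By Lemma~\ref{lem:basicD}\eqref{bd:mj} (general form), $F(G+0^l\cat X)\ltt A\oplus F(G)$; the reduction depends on $G$, which is allowed. Likewise the parameters $P_1=F(H\pm 2^l)$, $P_2=F(-H)$, $P_3=F(G+2H)$ and $F(H+n2^l)$ for $n<2^d$ are tt below $F(H)$ or $F(G)\oplus F(H)$, again by Lemma~\ref{lem:basicD}\eqref{bd:mj}. The shift functional is built exactly as in Step~3 of Proposition~\ref{prop:lower}, now relative to the oracle $F(G)\oplus F(H)$. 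Lemma~\ref{lem:trans}\eqref{tr:trans} with $\kappa=-2^l$ clears bit $l$, and Lemma~\ref{lem:trans}\eqref{tr:half} halves, which gives $\Theta$ with $\Theta(F(G+0^l\cat W))=F(G+0^l\cat S(W))$ along the relevant $W=S^m(X)$. Iterating a uniform sequence of total functionals yields a tt-reduction to $A\oplus F(G)\oplus F(H)$, provided each bit $W(0)$ can be read off $F(G+0^l\cat W)$ and the parameters by a \emph{bounded} procedure. The branching in $\Theta$ on $W(0)$ also needs this read step.

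The read step is the main obstacle, because $G$ is generic rather than $\tau\cat0^\omega$, so carries interfere. Write $G+0^l\cat W=\sigma\cat V$, where $\sigma$ is unchanged by Lemma~\ref{lem:2adic}\eqref{2a:sub}; then $V(0)$ determines $W(0)$, since $W(0)=V(0)\oplus G(l)$. For this I will use the bounded search over even $n<2^d$, via the remark in the introduction. Adding $n2^l$ with $n$ even does not change bit $l$. If $V(0)=i$, then every even $n$ lands $G+0^l\cat W+n2^l$ in $N_{\sigma\cat i}$, where $F(\cdot)(j)=b$. If $V(0)=1-i$, then some even $n<2^d$ makes bits $l,\dots,l+d-1$ agree with $q_1$, since the required difference mod $2^d$ is even; for that $n$ we have $F(\cdot)(j)=1-b$. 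Hence $V(0)=1-i$ iff $F(G+0^l\cat W+n2^l)(j)=1-b$ for some even $n<2^d$. Each value $F(G+0^l\cat W+n2^l)$ is obtained from $F(G+0^l\cat W)$ and $F(H+n2^l)$, $F(-H)$ by Lemma~\ref{lem:trans}\eqref{tr:trans}. This is a finite disjunction of tt-computations, so the decoding $\Gamma(\,\cdot\,)(m)$ stays total. I expect the work to lie in checking that every pair needed in this step and in the halving step lies in $\mathcal G_X$'s defining list, which is why that list must be fixed in advance.
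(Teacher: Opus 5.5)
Your proposal is correct and follows the paper's proof essentially step for step: the same data from Lemmas~\ref{lem:E} and~\ref{lem:split}, the same bounded read over the even translates $n2^l$ with $n<2^d$ (the paper's $\kappa_c=2^{l+1}c$), clearing bit $l$ by translation by $-2^l$ and then halving via Lemma~\ref{lem:trans}, a countable intersection of translates of $E$ for $\mathcal G_X$, and the final bounds $a(X_0)\ltt A\oplus F(G)$ and $P\ltt F(G)\oplus F(H)$. There are two cosmetic differences. The paper takes $U=N_{\sigma\cat i}\times N_\beta$, so that $G(l)=i$ is fixed and $T(l)$ is read directly, whereas you allow any $G\in N_\sigma$ and correct by the hard-coded bit $G(l)$; and the paper packages your explicit list of translation vectors as the subgroup $C_X$. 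One small correction: translation preserves $U=N_\sigma\times N_\beta$ because your vectors actually lie in $N_{0^l}\times N_{0^l}$, not merely in $N_{0^k}\times N_{0^k}$ (alternatively, argue in $N_\alpha\times N_\beta$ as the paper does).
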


\begin{proof}
  Let $k$, $\alpha$, $\beta$, $\Padd$, $\Psub$, $\Phalf$ and $E$ be as in
  Lemma~\ref{lem:E}, and let $j$, $b$, $i$, $\sigma$, $l$, $d$ and $q_1$ be as
  in Lemma~\ref{lem:split} for this $\alpha$. So $l\ge k$. Let
  $q_0=\sigma\cat i$ and
  \[
    U=N_{q_0}\times N_\beta .
  \]
  Let $Y=2^{l}$ and, for $0\le c<2^{d-1}$, let $\kappa_c=2^{l+1}c$, an element
  of $N_{0^{l+1}}$ (recall that natural numbers are identified with elements of
  $\Cantor$).

  Fix $X\in\Cantor$. For $m\in\omega$ let
  \[
    X_m=0^{l}\cat S^{m}(X)\in N_{0^{l}},
  \]
  and let $C_X$ be the subgroup of $N_{0^{l}}$ generated by
  $\{X_m:m\in\omega\}\cup\{Y\}$. Then $C_X$ is countable, and
  $\kappa_c=2c\cdot Y\in C_X$ for each $c$.

  \medskip
  \noindent\emph{The good pairs.} Let
  \[
    \mathcal{G}_X=\bigl\{(G,H)\in U:\ (G+c_1,\,H+c_2)\in E\text{ for all }
    c_1,c_2\in C_X\bigr\}.
  \]
  We claim that $\mathcal{G}_X$ is comeager in $U$. Fix $c_1,c_2\in C_X$.
  Since $c_1,c_2\in N_{0^{l}}\subseteq N_{0^{k}}$, the translation
  $t(G,H)=(G+c_1,H+c_2)$ is a homeomorphism of $N_\alpha\times N_\beta$ onto
  itself by Lemma~\ref{lem:2adic}\eqref{2a:sub}. As $E$ is comeager in
  $N_\alpha\times N_\beta$, so is $t^{-1}(E)$, and therefore $t^{-1}(E)\cap U$
  is comeager in the open subset $U$ of $N_\alpha\times N_\beta$. The set
  $\mathcal{G}_X$ is the intersection of these countably many sets
  $t^{-1}(E)\cap U$, which proves the claim. Taking $c_1=c_2=0$ and recalling
  that $E\subseteq D^{*}$, we see that $\mathcal{G}_X\subseteq D\times D$.

  \medskip
  For the rest of the proof fix $(G,H)\in\mathcal{G}_X$ and $A$ with
  $\dg(A)=\pi(\dg(X))$; we show that $X\ltt A\oplus F(G)\oplus F(H)$. Since
  $E\subseteq D^{*}$, for all $c_1,c_2\in C_X$ the reals
  \begin{equation}\label{eq:inD}
    G+c_1,\qquad H+c_2,\qquad -H,\qquad G+2H
  \end{equation}
  belong to $D$ (for the last two use the pair $(G,H)\in E$). For $T\in C_X$
  write
  \[
    a(T)=F(G+T).
  \]

  \medskip
  \noindent\emph{Translation.} For all $T,\kappa\in C_X$,
  \begin{equation}\label{eq:btrans}
    a(T+\kappa)=\Psub\Bigl(\Padd\bigl(a(T)\oplus F(H+\kappa)\bigr)\oplus
    F(-H)\Bigr).
  \end{equation}
  This is Lemma~\ref{lem:trans}\eqref{tr:trans}: the pairs $(G+T,H+\kappa)$ and
  $(G+T+\kappa,H)$ are in $E$ because $(G,H)\in\mathcal{G}_X$ and
  $T,\kappa,T+\kappa\in C_X$.

  \medskip
  \noindent\emph{Halving.} For all $T\in C_X\cap N_{0^{l+1}}$ such that
  $T/2\in C_X$,
  \begin{equation}\label{eq:bhalf}
    a(T/2)=\Psub\Bigl(\Phalf\bigl(a(T)\oplus F(G+2H)\bigr)\oplus F(-H)\Bigr).
  \end{equation}
  This is Lemma~\ref{lem:trans}\eqref{tr:half}: the pairs $(G+T,H-T/2)$ and
  $(G+T/2,H)$ are in $E$ because $T,\,-T/2,\,T/2\in C_X$.

  \medskip
  \noindent\emph{Reading a bit.} We claim that for every $T\in C_X$,
  \begin{equation}\label{eq:bread}
    T(l)=1\iff\text{there is }c<2^{d-1}\text{ such that }
    a(T+\kappa_c)(j)=1-b .
  \end{equation}
  Recall that $T\in N_{0^{l}}$ and $G\supseteq q_0=\sigma\cat i$. Let $s<2^{l}$
  be the natural number whose binary expansion is $\sigma$.

  Suppose first that $T(l)=0$, i.e.\ $T\in N_{0^{l+1}}$. Then for every $c$,
  $T+\kappa_c\in N_{0^{l+1}}$, so $G+T+\kappa_c\in N_{q_0}$ by
  Lemma~\ref{lem:2adic}\eqref{2a:sub}. Also $G+T+\kappa_c\in D$ by
  \eqref{eq:inD}. Hence $a(T+\kappa_c)(j)=b$ for every $c$, by
  Lemma~\ref{lem:split}.

  Suppose now that $T(l)=1$. Modulo $2^{l+1}$ we have $G\equiv s+i2^{l}$ and
  $T\equiv 2^{l}$, so $G+T\equiv s+(1-i)2^{l}$; that is,
  $G+T\supseteq\sigma\cat(1-i)$. Hence there is $r<2^{d-1}$ with
  \[
    G+T\equiv s+(1-i)2^{l}+2^{l+1}r\pmod{2^{l+d}} .
  \]
  Similarly, since $q_1\supseteq\sigma\cat(1-i)$ has length $l+d$, the natural
  number with binary expansion $q_1$ is $s+(1-i)2^{l}+2^{l+1}r_1$ for some
  $r_1<2^{d-1}$. Let $c<2^{d-1}$ with $c\equiv r_1-r\pmod{2^{d-1}}$. Then
  $2^{l+1}(r+c)\equiv 2^{l+1}r_1\pmod{2^{l+d}}$, so
  \[
    G+T+\kappa_c\equiv s+(1-i)2^{l}+2^{l+1}r_1\pmod{2^{l+d}},
  \]
  that is, $G+T+\kappa_c\in N_{q_1}$, by Lemma~\ref{lem:2adic}\eqref{2a:tt}.
  Also $G+T+\kappa_c\in D$. Hence $a(T+\kappa_c)(j)=1-b$ by
  Lemma~\ref{lem:split}. This proves \eqref{eq:bread}.

  \medskip
  \noindent\emph{The update functional.} Let $P$ be the join of the finitely
  many reals
  \[
    F(H+\kappa_c)\ (c<2^{d-1}),\qquad F(H-Y),\qquad F(-H),\qquad F(G+2H),
  \]
  which are defined by \eqref{eq:inD}. We define two functions of a real $Z$,
  uniformly computable from $P$. First let
  \[
    \varepsilon(Z)=\begin{cases}
      1 & \text{if }\Psub\bigl(\Padd(Z\oplus F(H+\kappa_c))\oplus
      F(-H)\bigr)(j)=1-b\text{ for some }c<2^{d-1},\\
      0 & \text{otherwise,}
    \end{cases}
  \]
  then let
  \[
    Z'=\begin{cases}
      Z & \text{if }\varepsilon(Z)=0,\\
      \Psub\bigl(\Padd(Z\oplus F(H-Y))\oplus F(-H)\bigr) &
      \text{if }\varepsilon(Z)=1,
    \end{cases}
  \]
  and finally
  \[
    \Theta(Z)=\Psub\bigl(\Phalf(Z'\oplus F(G+2H))\oplus F(-H)\bigr).
  \]
  Since $\Padd,\Psub,\Phalf$ are total, there are tt-functionals
  $\widehat\varepsilon$ and $\widehat\Theta$ with
  $\varepsilon(Z)=\widehat\varepsilon(Z\oplus P)(0)$ and
  $\Theta(Z)\oplus P=\widehat\Theta(Z\oplus P)$ for all $Z\in\Cantor$.

  We claim that for every $m$,
  \begin{equation}\label{eq:bstep}
    \varepsilon\bigl(a(X_m)\bigr)=X(m)\qquad\text{and}\qquad
    \Theta\bigl(a(X_m)\bigr)=a(X_{m+1}).
  \end{equation}
  Let $Z=a(X_m)$. By \eqref{eq:btrans} with $T=X_m$ and $\kappa=\kappa_c$, the
  real tested in the definition of $\varepsilon(Z)$ is $a(X_m+\kappa_c)$. So
  by \eqref{eq:bread}, $\varepsilon(Z)=X_m(l)=X(m)$. Let
  $T'=0^{l+1}\cat S^{m+1}(X)$. If $X(m)=0$ then $T'=X_m$ and $Z'=Z=a(T')$. If
  $X(m)=1$ then $X_m=T'+Y$, so $T'=X_m-Y\in C_X$, and by \eqref{eq:btrans}
  with $T=X_m$ and $\kappa=-Y$ we get $Z'=a(X_m-Y)=a(T')$. In both cases
  $T'\in C_X\cap N_{0^{l+1}}$ and $T'/2=X_{m+1}\in C_X$, so by
  \eqref{eq:bhalf}, $\Theta(Z)=a(T'/2)=a(X_{m+1})$. This proves
  \eqref{eq:bstep}.

  \medskip
  \noindent\emph{Conclusion.} By \eqref{eq:bstep} and induction,
  $\widehat\Theta^{m}(a(X_0)\oplus P)=a(X_m)\oplus P$ and hence
  \[
    X(m)=\widehat\varepsilon\bigl(\widehat\Theta^{m}(a(X_0)\oplus P)\bigr)(0)
    \qquad\text{for all }m .
  \]
  The right-hand side is a tt-functional applied to $a(X_0)\oplus P$, so
  $X\ltt a(X_0)\oplus P$. It remains to bound $a(X_0)$ and $P$, using
  Lemma~\ref{lem:basicD}\eqref{bd:mj} and \eqref{eq:inD}. We have
  $a(X_0)=F(G+X_0)$ with $G+X_0\in D$, $G\in D$ and $G+X_0\ltt X\oplus G$; so
  $a(X_0)\ltt A\oplus F(G)$. Each of $H+\kappa_c$, $H-Y$, $-H$ belongs to $D$
  and is truth-table reducible to $H\in D$, and $G+2H\in D$ with
  $G+2H\ltt G\oplus H$; so $P\ltt F(G)\oplus F(H)$. Therefore
  $X\ltt A\oplus F(G)\oplus F(H)$.
\end{proof}

\subsection{Removing the parameters}\label{ss:remove}

\begin{lemma}\label{lem:remove}
  Let $U\subseteq\Cantor\times\Cantor$ be a nonempty open set and let
  $B,X\in\Cantor$. If $B\ltt X\oplus G\oplus H$ for comeager many $(G,H)\in U$,
  then $B\ltt X$.
\end{lemma}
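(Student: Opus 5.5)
The plan is to combine Lemma~\ref{lem:unif} (uniformization by Baire category) with a genericity argument. Write $U$ as containing a basic open set and work inside it. Let $\mathcal{C}\subseteq U$ be the comeager set of pairs $(G,H)$ with $B\ltt X\oplus G\oplus H$. Shrinking $\mathcal{C}$ to a dense $G_\delta$ subset, it is a Polish space, hence a Baire space.

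Apply Lemma~\ref{lem:unif} on $\mathcal{C}$ with $\varphi(G,H)=B$ (constant, hence continuous) and $\psi(G,H)=X\oplus G\oplus H$ (continuous). This produces a tt-functional $\Phi$ and a nonempty relatively open set $V=\mathcal{C}\cap O$ on which $B=\Phi(X\oplus G\oplus H)$. Choose strings $\gamma,\delta$ of equal length with $N_\gamma\times N_\delta\subseteq O\cap U$. Then $\Phi(X\oplus G\oplus H)=B$ for comeager many $(G,H)\in N_\gamma\times N_\delta$.

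Next I show that this identity holds for \emph{every} $(G,H)\in N_\gamma\times N_\delta$, using continuity. The set $\{(G,H):\Phi(X\oplus G\oplus H)=B\}$ is closed, since the map is continuous and $B$ is fixed. It contains a set that is dense in $N_\gamma\times N_\delta$, because comeager subsets of an open set in a Baire space are dense. Being closed, it therefore contains all of $N_\gamma\times N_\delta$. In particular, take $G=\gamma\cat 0^{\omega}$ and $H=\delta\cat 0^{\omega}$, which are computable. Then $B=\Phi(X\oplus G\oplus H)$, and $A\mapsto\Phi(A\oplus G\oplus H)$ is a tt-functional by the closure facts listed in the preliminaries (a tt-functional with a computable parameter plugged in). Hence $B\ltt X$.

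I see no serious obstacle. The only point needing care is that Lemma~\ref{lem:unif} requires a Baire space, which is why we pass to a dense $G_\delta$ subset of $U$. One might worry that the open set $V$ is only relatively open in $\mathcal{C}$, but the closedness argument handles this, since density of $V$ in $N_\gamma\times N_\delta$ is all that is needed.
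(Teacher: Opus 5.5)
Your proof is correct and is essentially the paper's argument: closedness of $\{(G,H):\Phi(X\oplus G\oplus H)=B\}$ plus Baire category yields a basic open $N_\gamma\times N_\delta$ on which one $\Phi$ works, and then you plug in $\gamma\cat 0^{\omega}$ and $\delta\cat 0^{\omega}$. The paper applies the category argument directly to the sets $C_\Phi$ closed in $U$, whose union is nonmeager in $U$, which avoids your detour through a dense $G_\delta$ subspace and the extra density-and-closure step.
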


\begin{proof}
  For each tt-functional $\Phi$ the set
  $C_\Phi=\{(G,H)\in U:B=\Phi(X\oplus G\oplus H)\}$ is closed in $U$. The union
  of these countably many sets is comeager in the nonempty Baire space $U$, so
  it is not meager, and hence some $C_\Phi$ is not nowhere dense. Being closed
  in $U$, this $C_\Phi$ contains a basic open set $N_\gamma\times N_\delta$.
  Then $B=\Phi(X\oplus\gamma\cat 0^{\omega}\oplus\delta\cat 0^{\omega})$, so
  $B\ltt X$.
\end{proof}

\begin{proposition}\label{prop:half}
  Assume \eqref{eq:data}. Then $\pi^{-1}(\mathbf{x})\le\mathbf{x}$ for every
  $\mathbf{x}\in\Dtt$; equivalently, $\mathbf{x}\le\pi(\mathbf{x})$ for every
  $\mathbf{x}\in\Dtt$.
\end{proposition}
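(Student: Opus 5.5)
We prove the second form, $\mathbf{x}\le\pi(\mathbf{x})$ for every $\mathbf{x}\in\Dtt$; the first form follows by substituting $\mathbf{x}=\pi^{-1}(\mathbf{z})$, which is possible since $\pi$ is onto.

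Fix $X\in\Cantor$ and a representative $A$ of $\pi(\dg(X))$. Let $U$ be the open set given by the Coding lemma (Proposition~\ref{prop:coding}), and $\mathcal{G}_X\subseteq U\cap(D\times D)$ the comeager set it provides. Then
\[
X\ltt A\oplus F(G)\oplus F(H)\qquad\text{for all }(G,H)\in\mathcal{G}_X .
\]
We want to pass from $F(G)\oplus F(H)$ to $G\oplus H$, so that Lemma~\ref{lem:remove} applies. The natural move is to apply the coding conclusion not to $X$ directly but in the image. Use Proposition~\ref{prop:coding} with $X$ replaced by a real $X^{*}$ satisfying $\dg(X^{*})=\pi^{-1}(\dg(X))$, and take $A=X$. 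This gives $X^{*}\ltt X\oplus F(G)\oplus F(H)$ on a comeager set. That is again the wrong direction, since it still involves $F(G)$ and $F(H)$.

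A better route is to pull the reduction back through $\pi^{-1}$. From $X\ltt A\oplus F(G)\oplus F(H)$ we get, in degrees,
\[
\dg(X)\le\pi(\dg(X))\vee\pi(\dg(G))\vee\pi(\dg(H))=\pi(\dg(X\oplus G\oplus H)).
\]
This alone does not give the claim. Instead, apply the Coding lemma to the real $X$ and then replace $A$ by $A$ itself. The goal is to show $X\ltt A\oplus G\oplus H$ for comeager many $(G,H)\in U$, after which Lemma~\ref{lem:remove} with $B=X$ and base real $A$ gives $X\ltt A$, i.e.\ $\dg(X)\le\pi(\dg(X))$.

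For this it suffices to have $F(G)\ltt A\oplus G$ and $F(H)\ltt A\oplus H$ for comeager many $(G,H)$. That is a bound of $\pi$ from above by $A\oplus{}$the argument, which we do not have. So a more symmetric trick is needed. Apply Proposition~\ref{prop:coding} to the real $X\oplus G\oplus H$ itself? That is circular.

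Another option is to use Lemma~\ref{lem:remove} with the roles of the parameters made generic relative to $A$. Mutual genericity should give the decomposition $\dg(A\oplus F(G)\oplus F(H))=\pi(\dg(X\oplus G\oplus H))$. One would then want genericity to yield $\dg(X)\le\pi(\dg(X\oplus G\oplus H))$ for comeager $(G,H)$, and from that $\pi^{-1}(\dg(X))\le\dg(X\oplus G\oplus H)$. That is the form to which Lemma~\ref{lem:remove} applies. Indeed, take a representative $B$ of $\pi^{-1}(\dg(X))$, and run the Coding lemma with $X:=B$ and $A:=X$, which is a legitimate choice since $\dg(X)=\pi(\dg(B))$. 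This yields $B\ltt X\oplus F(G)\oplus F(H)$ for $(G,H)\in\mathcal{G}_B$, and we would need to get from here to $B\ltt X\oplus G\oplus H$.

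The main obstacle is exactly this replacement of $F(G)\oplus F(H)$ by $G\oplus H$. My first attempt would be to exploit the freedom in choosing $(G,H)$. Apply Lemma~\ref{lem:unif} on the Polish space $\mathcal{G}_B\cap D^{*}$ to obtain, on a relatively open piece, a single tt-functional computing $F(G)\oplus F(H)$ from $G\oplus H$ together with a fixed finite amount of data. This is the place where the actual argument must work hardest, and I expect the proof to use Lemma~\ref{lem:remove} only after such a localization.
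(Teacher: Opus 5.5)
There is a genuine gap: your proposal never finishes. It ends by naming an obstacle (replacing $F(G)\oplus F(H)$ by $G\oplus H$) and proposing a fix that would fail. Your fix is to apply Lemma~\ref{lem:unif} to get a single tt-functional computing $F(G)\oplus F(H)$ from $G\oplus H$ plus finite data. But Lemma~\ref{lem:unif} needs the pointwise hypothesis $F(G)\ltt G$ (up to computable data), that is, $\pi(\dg(G))\le\dg(G)$ for generic $G$. Nothing available at this stage gives that. It is the reverse of the inequality you are trying to prove, and it is only known once $\pi$ is shown to be the identity. Your switch to applying the coding lemma to a representative $B$ of $\pi^{-1}(\dg(X))$ with $A:=X$ runs into the same problem: it puts $F(G)\oplus F(H)$ back on the right-hand side and leaves you needing that upper bound on $F$.

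The missing step is one you had already written down and then discarded with ``This alone does not give the claim.'' It does. For every $(G,H)\in\mathcal{G}_X$ you have $G,H\in D$. Since $F$ induces $\pi$ on $D$ and $\pi$ preserves joins, with no genericity needed, you get
\[
\dg(X)\le\pi(\dg(X))\vee\pi(\dg(G))\vee\pi(\dg(H))=\pi\bigl(\dg(X\oplus G\oplus H)\bigr).
\]
Now apply the order isomorphism $\pi^{-1}$ to both sides to get $\pi^{-1}(\dg(X))\le\dg(X\oplus G\oplus H)$. So if $B$ is any representative of $\pi^{-1}(\dg(X))$, then $B\ltt X\oplus G\oplus H$ for all $(G,H)\in\mathcal{G}_X$, which is comeager in $U$. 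Lemma~\ref{lem:remove} then gives $B\ltt X$, that is, $\pi^{-1}(\mathbf{x})\le\mathbf{x}$. The other form follows by applying this to $\pi(\mathbf{x})$.

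So the passage from $F(G)\oplus F(H)$ to $G\oplus H$ happens at the level of degrees, via $\pi^{-1}$, not at the level of reals; this is exactly the paper's proof. Your penultimate paragraph even states the step from $\dg(X)\le\pi(\dg(X\oplus G\oplus H))$ to $\pi^{-1}(\dg(X))\le\dg(X\oplus G\oplus H)$. The error is that you treated the first inequality as something still to be obtained by ``genericity'', when it follows directly from Proposition~\ref{prop:coding} as you showed two paragraphs earlier.
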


\begin{proof}
  Let $X\in\Cantor$ and let $B\in\Cantor$ with $\dg(B)=\pi^{-1}(\dg(X))$. Let
  $U$ and $\mathcal{G}_X$ be as in Proposition~\ref{prop:coding}, and fix any
  $A$ with $\dg(A)=\pi(\dg(X))$. For every $(G,H)\in\mathcal{G}_X$ we have
  $G,H\in D$ and $X\ltt A\oplus F(G)\oplus F(H)$, that is,
  \[
    \dg(X)\le\pi(\dg(X))\vee\pi(\dg(G))\vee\pi(\dg(H))
    =\pi\bigl(\dg(X\oplus G\oplus H)\bigr).
  \]
  Applying the order isomorphism $\pi^{-1}$ gives
  $\dg(B)=\pi^{-1}(\dg(X))\le\dg(X\oplus G\oplus H)$. So
  $B\ltt X\oplus G\oplus H$ for comeager many $(G,H)\in U$, and
  Lemma~\ref{lem:remove} gives $B\ltt X$. This shows
  $\pi^{-1}(\mathbf{x})\le\mathbf{x}$ for all $\mathbf{x}$; applying this to
  $\pi(\mathbf{x})$ gives $\mathbf{x}\le\pi(\mathbf{x})$.
\end{proof}

Note that no function inducing $\pi^{-1}$ was needed for
Proposition~\ref{prop:half}: the inverse automorphism is applied to degrees
only.

\subsection{The graph of \texorpdfstring{$\pi$}{pi} is Borel}

To obtain the reverse inequality we will apply Proposition~\ref{prop:half} to
$\pi^{-1}$, and for this we need a continuous function inducing $\pi^{-1}$ on a
comeager set. The key is that $\pi$ is determined by $F$, in a sufficiently
definable way: every degree is a meet of two joins of degrees of elements of
$D$. (Below, the indices are chosen with $G_2=X-G_1$ and $G_4=X-G_3$ in mind,
so that $\dg(X)=(\dg(G_1)\vee\dg(G_2))\wedge(\dg(G_3)\vee\dg(G_4))$.)

\begin{lemma}[Meets of generic joins]\label{lem:meet}
  Let $Q$ be the set of triples $(X,G_1,G_3)\in(\Cantor)^{3}$ such that for all
  tt-functionals $\Phi$ and $\Psi$, either
  $\Phi(X\oplus G_1)\ne\Psi(X\oplus G_3)$, or there are strings
  $\gamma\subset G_1$ and $\delta\subset G_3$ such that
  \[
    \Phi(X\oplus G_1')=\Psi(X\oplus G_3')\qquad\text{for all }
    G_1'\in N_\gamma,\ G_3'\in N_\delta .
  \]
  Then:
  \begin{enumerate}
  \item\label{mt:borel} $Q$ is a Borel set.
  \item\label{mt:comeager} For every $X$, the section
    $Q_X=\{(G_1,G_3):(X,G_1,G_3)\in Q\}$ is comeager in
    $\Cantor\times\Cantor$.
  \item\label{mt:meet} If $(X,G_1,G_3)\in Q$, then $\dg(X)$ is the greatest
    lower bound of $\dg(X\oplus G_1)$ and $\dg(X\oplus G_3)$ in $\Dtt$.
  \end{enumerate}
\end{lemma}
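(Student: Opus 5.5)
Part~\eqref{mt:borel} is a complexity computation. For fixed $\Phi,\Psi$, the set of triples with $\Phi(X\oplus G_1)\ne\Psi(X\oplus G_3)$ is open, since both sides are continuous. For fixed strings $\gamma,\delta$, the set of triples $(X,G_1,G_3)$ with $\gamma\subset G_1$, $\delta\subset G_3$ and $\Phi(X\oplus G_1')=\Psi(X\oplus G_3')$ for all $G_1'\in N_\gamma$, $G_3'\in N_\delta$ is the intersection of a clopen set with a set depending only on $X$. That set is closed in $X$: it is the projection-free condition that $\Phi(X\oplus G_1')(n)=\Psi(X\oplus G_3')(n)$ for all $n$ and all $G_1',G_3'$ in a compact set, and by compactness and continuity its complement is open. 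Taking a countable union over $\gamma,\delta$, a union with the open set, and a countable intersection over $\Phi,\Psi$ gives a Borel set.

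Part~\eqref{mt:comeager}: fix $X$ and $\Phi,\Psi$. Let $C=\{(G_1,G_3):\Phi(X\oplus G_1)=\Psi(X\oplus G_3)\}$, which is closed. The condition for this pair $(\Phi,\Psi)$ fails exactly at points of $C$ not in the interior of $C$, that is, on $C\setminus\operatorname{int}C$. Here I must check that a point in $\operatorname{int} C$ has a basic box $N_\gamma\times N_\delta$ with $\gamma\subset G_1$, $\delta\subset G_3$ inside $C$; this is immediate. The set $C\setminus\operatorname{int}C$ is the boundary of a closed set, hence nowhere dense. So $Q_X$ is the complement of a countable union of nowhere dense sets, and is therefore comeager.

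Part~\eqref{mt:meet} is the main point. Certainly $\dg(X)$ is a lower bound. Suppose $Z\ltt X\oplus G_1$ and $Z\ltt X\oplus G_3$, say $Z=\Phi(X\oplus G_1)=\Psi(X\oplus G_3)$. Since $(X,G_1,G_3)\in Q$ and the first alternative fails, there are $\gamma\subset G_1$ and $\delta\subset G_3$ with $\Phi(X\oplus G_1')=\Psi(X\oplus G_3')$ for all $G_1'\in N_\gamma$ and $G_3'\in N_\delta$. Fix $G_3'=G_3$. Then $\Phi(X\oplus G_1')=Z$ for all $G_1'\in N_\gamma$, and in particular $Z=\Phi(X\oplus\gamma\cat0^{\omega})$, which is tt-reducible to $X$ because $\gamma\cat0^\omega$ is computable. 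Hence $\dg(Z)\le\dg(X)$, so $\dg(X)$ is the greatest lower bound.

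The only delicate step is the Borel bookkeeping in~\eqref{mt:borel}, namely making sure the universally quantified condition over $N_\gamma\times N_\delta$ is closed in $X$. This follows because it is an intersection over all $G_1',G_3'$ of closed conditions.
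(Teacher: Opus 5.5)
Your proof is correct and follows the paper's argument in all three parts: the same decomposition of $Q$ into open sets and closed sets $L_{\gamma,\delta}\times N_\gamma\times N_\delta$, the same identification of the complement of $Q_X$ as $\bigcup_{\Phi,\Psi}(C\setminus\operatorname{int}C)$, and the same substitution of a computable point of a basic neighborhood to bound the common lower bound by $X$. The differences are cosmetic. You get closedness of the universally quantified condition as an intersection of closed sets, where the paper uses open projections (compactness is not actually needed here). You also substitute $\gamma\cat 0^{\omega}$ on the $\Phi$ side, where the paper substitutes $\delta\cat 0^{\omega}$ on the $\Psi$ side.
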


\begin{proof}
  \eqref{mt:borel} Fix $\Phi,\Psi$. The set of $(X,G_1,G_3)$ with
  $\Phi(X\oplus G_1)\ne\Psi(X\oplus G_3)$ is open. For strings $\gamma,\delta$,
  the set
  \[
    L_{\gamma,\delta}=\bigl\{X:\Phi(X\oplus G_1')=\Psi(X\oplus G_3')
    \text{ for all }G_1'\in N_\gamma,\ G_3'\in N_\delta\bigr\}
  \]
  is closed, since its complement is the projection to the first coordinate of
  an open subset of $\Cantor\times N_\gamma\times N_\delta$, and projections
  are open maps. Now $Q$ is the intersection, over the countably many pairs
  $(\Phi,\Psi)$, of the union of an open set with the countably many closed
  sets $L_{\gamma,\delta}\times N_\gamma\times N_\delta$.

  \eqref{mt:comeager} Fix $X$, and for $\Phi,\Psi$ let
  $C=C_{\Phi,\Psi}=\{(G_1,G_3):\Phi(X\oplus G_1)=\Psi(X\oplus G_3)\}$, a closed
  set. The condition in the definition of $Q$ for $(\Phi,\Psi)$ says exactly
  that $(G_1,G_3)$ is not in $C$ or is in the interior of $C$. So the
  complement of $Q_X$ is the union over $(\Phi,\Psi)$ of the sets
  $C\setminus\operatorname{int}(C)$, which are closed and nowhere dense.

  \eqref{mt:meet} Clearly $\dg(X)$ is a lower bound. Let $B\ltt X\oplus G_1$
  and $B\ltt X\oplus G_3$, say $B=\Phi(X\oplus G_1)=\Psi(X\oplus G_3)$. Since
  $(X,G_1,G_3)\in Q$ there are $\gamma\subset G_1$ and $\delta\subset G_3$ as
  in the definition. Taking $G_1'=G_1$ and $G_3'=\delta\cat 0^{\omega}$ we get
  $B=\Psi(X\oplus\delta\cat 0^{\omega})$, so $B\ltt X$.
\end{proof}

\begin{lemma}\label{lem:graph}
  Assume \eqref{eq:data}. Then the set
  \[
    R_\pi=\bigl\{(X,A)\in\Cantor\times\Cantor:\dg(A)=\pi(\dg(X))\bigr\}
  \]
  is Borel.
\end{lemma}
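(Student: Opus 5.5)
We plan to express the condition $\dg(A)=\pi(\dg(X))$ by a formula using only $F\res D$, tt-functionals and Borel sets, with countable quantifiers or quantifiers over comeager sections. The idea is the one indicated before Lemma~\ref{lem:meet}. For generic $G_1,G_3$ put $G_2=X-G_1$ and $G_4=X-G_3$. Then $\dg(X\oplus G_1)=\dg(G_1)\vee\dg(G_2)$, since $X=G_1+G_2$ and $G_2\ltt X\oplus G_1$, and similarly $\dg(X\oplus G_3)=\dg(G_3)\vee\dg(G_4)$. By Lemma~\ref{lem:meet}\eqref{mt:meet}, $\dg(X)$ is the meet of these two degrees. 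Since $\pi$ preserves joins and meets,
\[
\pi(\dg(X))=\bigl(\dg(F(G_1))\vee\dg(F(G_2))\bigr)\wedge\bigl(\dg(F(G_3))\vee\dg(F(G_4))\bigr),
\]
provided $G_1,G_2,G_3,G_4\in D$.

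Let $Q'$ be the set of $(X,G_1,G_3)\in Q$ with $G_1,X-G_1,G_3,X-G_3\in D$. This set is Borel, because $D$ is $G_\delta$ and the maps involved are continuous. For each $X$ the section $Q'_X$ is comeager: $G_1\mapsto X-G_1$ is a homeomorphism, and Lemma~\ref{lem:meet}\eqref{mt:comeager} applies. Write $M=F(G_1)\oplus F(X-G_1)$ and $M'=F(G_3)\oplus F(X-G_3)$; these are Borel functions of $(X,G_1,G_3)$ on $Q'$. For $(X,G_1,G_3)\in Q'$ the display gives $\dg(A)=\pi(\dg(X))$ if and only if:
\begin{enumerate}
\item $A\ltt M$ and $A\ltt M'$;
\item for every $B$, if $B\ltt M$ and $B\ltt M'$, then $B\ltt A$.
\end{enumerate}
Condition (1) is Borel, being a countable union over tt-functionals of closed conditions composed with Borel maps. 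Condition (2) has a quantifier over reals. We remove it by ranging over pairs $(\Phi,\Psi)$ of tt-functionals: whenever $\Phi(M)=\Psi(M')$, require that $\Phi(M)\ltt A$. This is a countable Boolean combination of Borel conditions. So the set
\[
R'=\{(X,G_1,G_3,A):(X,G_1,G_3)\in Q'\text{ and (1), (2) hold}\}
\]
is Borel.

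Then $(X,A)\in R_\pi$ holds if and only if there exists $(G_1,G_3)$ with $(X,G_1,G_3,A)\in R'$, and also if and only if comeager many $(G_1,G_3)$ satisfy this. This uses that $Q'_X$ is comeager and that the characterization is valid on all of $Q'$. The existential form only shows $R_\pi$ is analytic. The main obstacle is to get Borelness, and we would use the comeager form: by the Kuratowski--Ulam/Montgomery theorem \cite{Kec}, for a Borel set $R'$ the set of $(X,A)$ whose section is comeager is Borel. Alternatively, $R_\pi$ is analytic, and its complement is $\{(X,A):\exists (G_1,G_3)\,[(X,G_1,G_3)\in Q'\wedge(X,G_1,G_3,A)\notin R']\}$, which is also analytic. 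By Suslin's theorem $R_\pi$ is then Borel. Either route completes the proof.
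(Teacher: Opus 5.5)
Your proposal is correct and is essentially the paper's proof. Your $Q'$, your reals $F(G_1)\oplus F(X-G_1)$ and $F(G_3)\oplus F(X-G_3)$, and your conditions (1)--(2) are the paper's $\mathcal{W}$, $K_1$, $K_3$ and the Borel glb-set $M$; your second route (existential form analytic, universal form coanalytic, then Suslin) is exactly how the paper concludes, and your first route via the Montgomery--Novikov theorem is also valid, since the section of $R'$ at $(X,A)$ is either all of $Q'_X$ or empty.
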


\begin{proof}
  Let
  \[
    \mathcal{W}=\bigl\{(X,G_1,G_3)\in Q:\ G_1,\ X-G_1,\ G_3,\ X-G_3\in D\bigr\},
  \]
  where $Q$ is as in Lemma~\ref{lem:meet}. Then $\mathcal{W}$ is Borel, since
  $D$ is a $G_\delta$ set and the maps $(X,G)\mapsto X-G$ are continuous. For
  each $X$ the section $\mathcal{W}_X$ is comeager in $\Cantor\times\Cantor$,
  hence nonempty: it is the intersection of $Q_X$ with the four dense
  $G_\delta$ sets $D\times\Cantor$, $(X-D)\times\Cantor$, $\Cantor\times D$ and
  $\Cantor\times(X-D)$, where $X-D=\{X-G:G\in D\}$ is the image of $D$ under
  the homeomorphism $G\mapsto X-G$. On $\mathcal{W}$ define the continuous
  functions
  \[
    K_1(X,G_1,G_3)=F(G_1)\oplus F(X-G_1),\qquad
    K_3(X,G_1,G_3)=F(G_3)\oplus F(X-G_3).
  \]

  \medskip
  \noindent\emph{Claim.} If $(X,G_1,G_3)\in\mathcal{W}$ then $\pi(\dg(X))$ is
  the greatest lower bound of $\dg(K_1)$ and $\dg(K_3)$.

  \smallskip
  Indeed, $G_1\oplus(X-G_1)\ett X\oplus G_1$ by
  Lemma~\ref{lem:2adic}\eqref{2a:tt}, since $X=G_1+(X-G_1)$. As $F$ induces
  $\pi$ on $D$ and $\pi$ preserves joins,
  \begin{align*}
    \dg(K_1)&=\pi(\dg(G_1))\vee\pi(\dg(X-G_1))\\
    &=\pi\bigl(\dg(G_1)\vee\dg(X-G_1)\bigr)=\pi\bigl(\dg(X\oplus G_1)\bigr),
  \end{align*}
  and likewise $\dg(K_3)=\pi(\dg(X\oplus G_3))$. By
  Lemma~\ref{lem:meet}\eqref{mt:meet}, $\dg(X)$ is the greatest lower bound of
  $\dg(X\oplus G_1)$ and $\dg(X\oplus G_3)$, and an automorphism of a partial
  order maps the greatest lower bound of two elements to the greatest lower
  bound of their images. This proves the claim.

  \medskip
  Let $M$ be the set of triples $(A,K,K')\in(\Cantor)^{3}$ such that
  \[
    A\ltt K,\quad A\ltt K',\quad\text{and for all }B\in\Cantor,
    \text{ if }B\ltt K\text{ and }B\ltt K'\text{ then }B\ltt A .
  \]
  Thus $(A,K,K')\in M$ if and only if $\dg(A)$ is the greatest lower bound of
  $\dg(K)$ and $\dg(K')$. The relation $\ltt$ is a countable union of closed
  sets, hence Borel. The last clause in the definition of $M$ is equivalent to:
  for all tt-functionals $\Phi$ and $\Psi$, if $\Phi(K)=\Psi(K')$ then
  $\Phi(K)\ltt A$. This is a countable conjunction of Borel conditions, so $M$
  is Borel.

  By the claim, and since $\mathcal{W}_X$ is nonempty for every $X$, the
  following are equivalent for all $X$ and $A$:
  \begin{enumerate}
  \item[(a)] $(X,A)\in R_\pi$;
  \item[(b)] for all $(G_1,G_3)\in\mathcal{W}_X$, we have
    $\bigl(A,K_1(X,G_1,G_3),K_3(X,G_1,G_3)\bigr)\in M$;
  \item[(c)] for some $(G_1,G_3)\in\mathcal{W}_X$, we have
    $\bigl(A,K_1(X,G_1,G_3),K_3(X,G_1,G_3)\bigr)\in M$.
  \end{enumerate}
  Indeed, (a) implies (b) by the claim; (b) implies (c) because
  $\mathcal{W}_X\ne\emptyset$; and if (c) holds then $\dg(A)$ and
  $\pi(\dg(X))$ are both the greatest lower bound of $\dg(K_1)$ and
  $\dg(K_3)$ for the witnessing pair, so (a) holds. The set
  \[
    \bigl\{(X,A,G_1,G_3):(X,G_1,G_3)\in\mathcal{W}\text{ and }
    \bigl(A,K_1(X,G_1,G_3),K_3(X,G_1,G_3)\bigr)\in M\bigr\}
  \]
  is Borel, being the preimage of the Borel set $M$ under a continuous function
  defined on a Borel set. By (c), $R_\pi$ is its projection to the coordinates
  $(X,A)$, hence analytic. Similarly, by (b) the complement of $R_\pi$ is the
  projection of the Borel set obtained by replacing ``$\in M$'' by
  ``$\notin M$'', hence analytic. So $R_\pi$ is both analytic and coanalytic,
  and therefore Borel by Suslin's theorem \cite[14.11]{Kec}.
\end{proof}

\subsection{The inverse automorphism}\label{ss:inverse}

\begin{proposition}\label{prop:inverseD}
  Assume \eqref{eq:data}. Then there are a tt-functional $\Psi$ and a dense
  $G_\delta$ set $\widetilde D\subseteq\Cantor$ such that $\Psi\res\widetilde D$
  induces $\pi^{-1}$ on $\widetilde D$.
\end{proposition}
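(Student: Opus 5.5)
By Proposition~\ref{prop:half}, $\pi^{-1}(\mathbf{z})\le\mathbf{z}$ for every $\mathbf{z}$. So for every $Z\in\Cantor$ there is a tt-functional $\Psi$ with $\dg(\Psi(Z))=\pi^{-1}(\dg(Z))$: pick any $B$ with $\dg(B)=\pi^{-1}(\dg(Z))$ and write $B=\Psi(Z)$. Equivalently, $(\Psi(Z),Z)\in R_\pi$. Thus
\[
  \Cantor=\bigcup_{\Psi}S_\Psi,\qquad S_\Psi=\bigl\{Z\in\Cantor:(\Psi(Z),Z)\in R_\pi\bigr\},
\]
a countable union over tt-functionals $\Psi$. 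By Lemma~\ref{lem:graph}, $R_\pi$ is Borel. Since $Z\mapsto(\Psi(Z),Z)$ is continuous, each $S_\Psi$ is Borel and so has the Baire property. This definability is exactly what Lemma~\ref{lem:graph} was established for, and it is the point where the plan would break without it.

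Since $\Cantor$ is not meager, some $S_\Psi$ is nonmeager. A nonmeager set with the Baire property is comeager in some basic open set $N_\sigma$. Fix such $\Psi$ and $\sigma$, so $S_\Psi\cap N_\sigma$ is comeager in $N_\sigma$. This gives one functional on a comeager part of one cone, and it remains to transport it to a comeager subset of all of $\Cantor$, as in the proof of Proposition~\ref{prop:inverse}. Define $\Psi'(Z)=\Psi(\sigma\cat Z)$, a tt-functional. The map $h\colon Z\mapsto\sigma\cat Z$ is a homeomorphism of $\Cantor$ onto $N_\sigma$, so $\widetilde D:=h^{-1}(S_\Psi\cap N_\sigma)$ is comeager in $\Cantor$. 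Let $\widetilde D'\subseteq\widetilde D$ be a dense $G_\delta$ set, which exists by Baire category.

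For $Z\in\widetilde D'$ we have $\sigma\cat Z\in S_\Psi$, so
\[
  \dg(\Psi'(Z))=\pi^{-1}(\dg(\sigma\cat Z))=\pi^{-1}(\dg(Z)),
\]
using $\sigma\cat Z\ett Z$. Hence $\Psi'$ restricted to $\widetilde D'$ induces $\pi^{-1}$ on $\widetilde D'$, which proves the proposition with $\Psi'$ in place of $\Psi$ and $\widetilde D'$ in place of $\widetilde D$.

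The only nontrivial step is the Baire property of the sets $S_\Psi$. Once Lemma~\ref{lem:graph} is available, the rest is routine category bookkeeping.
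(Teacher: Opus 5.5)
Your proposal is correct and matches the paper's proof: you cover $\Cantor$ by the sets $\{Z:(\Phi(Z),Z)\in R_\pi\}$, which are Borel by Lemma~\ref{lem:graph}, take a nonmeager one, localize it to some $N_\sigma$, and precompose with $Z\mapsto\sigma\cat Z$. The only difference is cosmetic: you pass to a dense $G_\delta$ subset after pulling back along $Z\mapsto\sigma\cat Z$, whereas the paper does so before.
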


\begin{proof}
  For each tt-functional $\Phi$ let
  \[
    B_\Phi=\bigl\{Z\in\Cantor:\dg(\Phi(Z))=\pi^{-1}(\dg(Z))\bigr\}
    =\bigl\{Z\in\Cantor:(\Phi(Z),Z)\in R_\pi\bigr\}.
  \]
  By Lemma~\ref{lem:graph}, $B_\Phi$ is the preimage of a Borel set under
  a continuous map, hence Borel, and therefore has the Baire property. By Proposition~\ref{prop:half}, for every $Z$ the degree
  $\pi^{-1}(\dg(Z))$ is below $\dg(Z)$, so it contains a real of the form
  $\Phi(Z)$; thus $\Cantor=\bigcup_\Phi B_\Phi$. Since $\Cantor$ is not meager,
  some $B_\Phi$ is not meager. As it has the Baire property, it is comeager in
  some nonempty open set \cite[8.26]{Kec}; so there are
  $\sigma\in\strings$ and a dense $G_\delta$ subset $D_1$ of $N_\sigma$ with
  $D_1\subseteq B_\Phi$. Let
  \[
    \Psi(Z)=\Phi(\sigma\cat Z),\qquad
    \widetilde D=\{Z\in\Cantor:\sigma\cat Z\in D_1\}.
  \]
  Then $\Psi$ is a tt-functional, $\widetilde D$ is a dense $G_\delta$ subset
  of $\Cantor$, and for $Z\in\widetilde D$ we have
  $\dg(\Psi(Z))=\pi^{-1}(\dg(\sigma\cat Z))=\pi^{-1}(\dg(Z))$.
\end{proof}

\subsection{Proof of the main theorem}

We first record the version of the theorem in which a function is given for
the inverse automorphism as well. This is the form in which the coding lemma
is used in \cite{Mar}, and its proof needs only
Sections~\ref{ss:E}--\ref{ss:remove}.

\begin{proposition}[Two-sided version]\label{prop:twosided}
  Let $\pi$ be an automorphism of $\Dtt$. Suppose that $\pi$ is induced on a
  comeager set by a continuous function, and that $\pi^{-1}$ is induced on a
  comeager set by a continuous function. Then $\pi$ is the identity.
\end{proposition}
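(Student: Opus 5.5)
The plan is to apply Proposition~\ref{prop:half} twice, once to $\pi$ and once to $\pi^{-1}$, and combine the two inequalities. We may assume that the comeager sets are dense $G_\delta$ sets, so that the data \eqref{eq:data} hold for $\pi$ and, separately, for $\pi^{-1}$.

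For the reduction, suppose $F$ is continuous on a comeager set $D_0$ and induces $\pi$ there. The set $D_0$ contains a dense $G_\delta$ set $D$. The restriction $F\res D$ is still continuous, and it still induces $\pi$ on $D$. So \eqref{eq:data} holds for $(\pi,D,F\res D)$, and likewise for $\pi^{-1}$ with its given function $\widetilde F$ restricted to a dense $G_\delta$ set $\widetilde D$. The hypotheses give nothing more than this.

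Now Proposition~\ref{prop:half} for $\pi$ gives $\mathbf{x}\le\pi(\mathbf{x})$ for all $\mathbf{x}\in\Dtt$. The same proposition for $\pi^{-1}$, with data $(\pi^{-1},\widetilde D,\widetilde F\res\widetilde D)$, gives $\mathbf{x}\le\pi^{-1}(\mathbf{x})$ for all $\mathbf{x}$. Apply the second inequality to $\pi(\mathbf{x})$ to get $\pi(\mathbf{x})\le\pi^{-1}(\pi(\mathbf{x}))=\mathbf{x}$. Combined with $\mathbf{x}\le\pi(\mathbf{x})$, antisymmetry in $\Dtt$ yields $\pi(\mathbf{x})=\mathbf{x}$. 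So $\pi$ is the identity.

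The proof of Proposition~\ref{prop:half} uses only Sections~\ref{ss:E}--\ref{ss:remove}: Lemma~\ref{lem:E}, Lemma~\ref{lem:split}, the coding lemma and Lemma~\ref{lem:remove}. It does not use Lemma~\ref{lem:graph}, as the paper says. There is no real obstacle here. The only point needing care is checking that shrinking the comeager domain to a dense $G_\delta$ subset preserves both continuity and the inducing property, which is immediate.
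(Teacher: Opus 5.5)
Your proposal is correct and follows the paper's own proof: shrink both comeager sets to dense $G_\delta$ sets so that \eqref{eq:data} holds for $\pi$ and for $\pi^{-1}$, then apply Proposition~\ref{prop:half} to each and combine. You substitute $\pi(\mathbf{x})$ into $\mathbf{x}\le\pi^{-1}(\mathbf{x})$ where the paper applies $\pi$ to that inequality, which is the same step.
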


\begin{proof}
  A comeager set contains a dense $G_\delta$ set, and the restriction of a
  continuous function inducing an automorphism on a set induces it on every
  subset. So \eqref{eq:data} holds for $\pi$ with some $D$ and $F$, and it
  holds for $\pi^{-1}$ in place of $\pi$ with some $\widetilde D$ and
  $\widetilde F$. By Proposition~\ref{prop:half} applied to $\pi$,
  \[
    \mathbf{x}\le\pi(\mathbf{x})\qquad\text{for all }\mathbf{x}\in\Dtt ,
  \]
  and by Proposition~\ref{prop:half} applied to $\pi^{-1}$,
  \[
    \mathbf{x}\le\pi^{-1}(\mathbf{x})\qquad\text{for all }\mathbf{x}\in\Dtt .
  \]
  Applying $\pi$ to the last inequality gives $\pi(\mathbf{x})\le\mathbf{x}$.
  Hence $\pi(\mathbf{x})=\mathbf{x}$ for all $\mathbf{x}$.
\end{proof}

\begin{proof}[Proof of Theorem~\ref{thm:main}]
  Let $\pi$, $D$ and $F$ be as in the theorem. Replacing $D$ by a dense
  $G_\delta$ subset, we may assume that \eqref{eq:data} holds. By
  Proposition~\ref{prop:inverseD} there are a tt-functional $\Psi$ and a dense
  $G_\delta$ set $\widetilde D$ such that $\Psi\res\widetilde D$ induces
  $\pi^{-1}$ on $\widetilde D$. So the hypotheses of
  Proposition~\ref{prop:twosided} hold, and $\pi$ is the identity.
\end{proof}

\begin{remark}[On the hypothesis]\label{rem:onesided}
  It may be helpful to separate the two steps of the proof of
  Theorem~\ref{thm:main}, since they are of different character.

  The two-sided statement, Proposition~\ref{prop:twosided}, is the direct
  analogue of the argument in \cite{Mar}. Its proof consists of the coding
  lemma and the removal of parameters, and uses no descriptive set theory
  beyond the Baire category theorem. In particular it gives: \emph{if both
  $\pi$ and $\pi^{-1}$ are induced by Baire measurable functions, then $\pi$ is
  the identity.}

  That the hypothesis on $\pi^{-1}$ can be dropped, so that
  Theorem~\ref{thm:main} and Corollary~\ref{cor:bm} are correct as stated with
  a hypothesis on $\pi$ only, is the content of
  Proposition~\ref{prop:inverseD}: a continuous function inducing $\pi$ on a
  comeager set yields a tt-functional inducing $\pi^{-1}$ on a comeager set.
  Its proof uses, in addition, Proposition~\ref{prop:half} for $\pi$ and the
  fact that the sets $B_\Phi$ have the Baire property, which is where
  Lemmas~\ref{lem:meet} and~\ref{lem:graph} enter. For this purpose it is
  enough to know that $R_\pi$ is analytic, that is, the equivalence of (a)
  and (c) in the proof of Lemma~\ref{lem:graph}: then each $B_\Phi$ is
  analytic, as a continuous preimage of an analytic set, and analytic sets
  have the Baire property \cite[21.6]{Kec}. Alternatively one uses that
  $R_\pi$ is Borel, as we did. A third route avoids analytic sets altogether:
  with the notation of the proof of Lemma~\ref{lem:graph}, the set
  \[
    \bigl\{(Z,G_1,G_3):(\Phi(Z),G_1,G_3)\in\mathcal{W}\text{ and }
    \bigl(Z,K_1(\Phi(Z),G_1,G_3),K_3(\Phi(Z),G_1,G_3)\bigr)\in M\bigr\}
  \]
  is Borel, and by the equivalence of (a), (b) and (c) its section at $Z$ is
  the comeager set $\mathcal{W}_{\Phi(Z)}$ if $Z\in B_\Phi$ and is empty
  otherwise. So $B_\Phi$ is the set of $Z$ at which a Borel subset of
  $\Cantor\times(\Cantor\times\Cantor)$ has a nonmeager section, and such a
  set has the Baire property by the Kuratowski--Ulam theorem
  \cite[8.41]{Kec}. No function inducing $\pi^{-1}$ is assumed at any point:
  in Proposition~\ref{prop:half} the automorphism $\pi^{-1}$ is applied to
  degrees only, and in Lemma~\ref{lem:graph} only the fact that $\pi$
  preserves joins and meets is used.
\end{remark}

\begin{proof}[Proof of Corollary~\ref{cor:bm}]
  If $F\colon\Cantor\to\Cantor$ is Baire measurable and induces $\pi$, let $D$
  be a dense $G_\delta$ set such that $F\res D$ is continuous
  \cite[8.38]{Kec}. Then $F\res D$ induces $\pi$ on $D$, and
  Theorem~\ref{thm:main} applies.
\end{proof}

\begin{remark}
  In the continuous case we obtained $X\ltt F(X)$ outright
  (Proposition~\ref{prop:lower}), without passing through the parameters
  $F(G)$ and $F(H)$ and Lemma~\ref{lem:remove}. The reason is that there the
  relations \Radd, \Rsub, \Rhalf\ hold on a full open set of pairs, so that $G$
  and $H$ may be taken computable, and $F$ sends computable reals to computable
  reals. Likewise, in the continuous case the sets used in place of the $B_\Phi$ of
  Proposition~\ref{prop:inverseD} are closed (Proposition~\ref{prop:inverse}),
  so that Lemma~\ref{lem:graph} is not needed.
\end{remark}

\begin{remark}\label{rem:local}
  The proof uses $D$ only locally. If $D$ is merely comeager in some nonempty
  open set $N_\rho$ and $F\colon D\to\Cantor$ is continuous and induces $\pi$
  on $D$, then $Z\mapsto F(\rho\cat Z)$ is continuous on the comeager set
  $\{Z:\rho\cat Z\in D\}$ and induces $\pi$ on it, since
  $\rho\cat Z\ett Z$. So Theorem~\ref{thm:main} applies in this situation as
  well.
\end{remark}

\section{Determinacy}\label{sec:ad}

In this section we work in $\mathrm{ZF}+\mathrm{AD}$, where $\mathrm{AD}$ is
the Axiom of Determinacy: every two-player game of length $\omega$ with
integer moves is determined. Recall that $\mathrm{AD}$ implies that every set
of reals has the Baire property, and that it implies countable choice for sets
of reals, $\mathrm{AC}_\omega(\mathbb{R})$; see e.g.\ \cite{Mos}. Of course
$\mathrm{AD}$ contradicts the full axiom of choice.

The theory $\mathrm{ZF}+\mathrm{AD}$ does not obviously provide a way to
choose a representative of $\pi(\dg(X))$ for every $X$ simultaneously, so it is
not clear that an automorphism of $\Dtt$ is induced by a function defined on
all of $\Cantor$.
But Theorem~\ref{thm:main} only asks for a function on a comeager set, and this
is provided by the following standard consequence of $\mathrm{AD}$, whose
proof by an unfolded Banach--Mazur game we include for completeness.

\begin{lemma}[$\mathrm{ZF}+\mathrm{AD}$; comeager uniformization]\label{lem:cu}
  Let $R\subseteq\Cantor\times\Cantor$ be such that for every $X$ there is $A$
  with $(X,A)\in R$. Then there are a dense $G_\delta$ set $D\subseteq\Cantor$
  and a continuous function $F\colon D\to\Cantor$ such that $(X,F(X))\in R$ for
  all $X\in D$.
\end{lemma}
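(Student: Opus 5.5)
We would prove Lemma~\ref{lem:cu} with an unfolded Banach--Mazur game. Player I's extra information is the real $A$. Player II tries to build $X$ such that I's witness fails, and a winning strategy for I will give the continuous uniformizing function on a comeager set.

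Fix $R$ as in the statement. For each basic open set $N_\sigma$, consider the game $\mathcal{G}_\sigma$ in which I and II alternately play nonempty finite strings $s_0,t_0,s_1,t_1,\dots$, with I's first move extending $\sigma$. In addition, at move $n$ player I plays a bit $a_n$. Let $X=s_0\cat t_0\cat s_1\cat t_1\cat\cdots$ and $A=(a_n)_n$. Player I wins iff $(X,A)\in R$. Moves are coded by integers, so under $\mathrm{AD}$ each $\mathcal{G}_\sigma$ is determined.

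\emph{Step 1: II has no winning strategy.} Suppose $\tau$ is a winning strategy for II in $\mathcal{G}_\sigma$. The standard Banach--Mazur argument works with I's bits ignored. For each fixed sequence of I's bits $A$, the set of $X$ producible against $\tau$ omits $R^{-1}$-witnesses. More concretely, run the unfolded argument as follows. The set of $X\in N_\sigma$ for which I can play some $A$ with $(X,A)\in R$ is all of $N_\sigma$. II's strategy $\tau$ responds to I's moves, including the bits. Fix any $X_0$ of the form obtained by letting I play along $X_0$ and its witness $A_0$. The difficulty is that II's responses alter $X$, so I cannot simply follow $X_0$.

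The standard resolution is to do an ordinary Banach--Mazur argument with $\tau$. From $\tau$, build a comeager set $C\subseteq N_\sigma$ such that for every $X\in C$ and every $A$ there is a play compatible with $\tau$ producing $(X,A)$. Then $(X,A)\notin R$ for all $A$, contradicting the totality of $R$. To build $C$, use the countably many finite positions of $\tau$ and the fact that II's responses are continuous in I's bits, so there are only finitely many branchings per finite stage. Taking the intersection of dense open sets ``$X$ passes through a $\tau$-response for every choice of I's bits so far'' gives $C$ as a dense $G_\delta$. This step is the main obstacle and needs care in interleaving the finitely many bit sequences at each stage; I would try to show that, at each stage, one can extend by concatenating responses for all $2^n$ bit patterns successively.

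\emph{Step 2: I's strategy gives the function.} By determinacy, I has a winning strategy $\rho_\sigma$ in $\mathcal{G}_\sigma$ for each $\sigma$. Only one strategy is needed, say for $\sigma=\emptyset$, so no choice is required. The usual Oxtoby argument shows that the set $D$ of $X$ that arise from plays against $\rho$ contains a dense $G_\delta$. Moreover, on a dense $G_\delta$ subset, the play producing $X$ can be decoded continuously from $X$. To arrange this, define $D$ via a tree of positions: at each node, II's possible moves are enumerated, and I's answers, given by $\rho$, yield disjoint open sets whose union is dense below the node. For $X\in D$ the path of positions is unique and continuous in $X$. Then $F(X)$ is the sequence of bits that $\rho$ played along that path, so $F$ is continuous on $D$. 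Since $\rho$ is winning, $(X,F(X))\in R$ for all $X\in D$.
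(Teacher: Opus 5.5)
Your Step 2 produces a set $D$ that is not dense in $\Cantor$. In $\mathcal{G}_\sigma$ the player who supplies the bits and wins when $(X,A)\in R$ is I, the player who moves first. So a winning strategy $\rho$ for I in $\mathcal{G}_\emptyset$ opens with a fixed nonempty string $s_0$, and every play against $\rho$ produces some $X\supseteq s_0$. Your Oxtoby-type decoding therefore gives a set that is comeager only in $N_{s_0}$, and your $D$ lies inside the proper clopen set $N_{s_0}$. The claim that one strategy suffices and no choice is needed is therefore false. (For the application to $R_\pi$ a set comeager in some $N_{s_0}$ would be enough, by Remark~\ref{rem:local}, but the lemma is stated for arbitrary $R$.)

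To repair your version you would have to:
\begin{enumerate}
\item pick winning strategies $\rho_\sigma$ for every $\sigma$, which uses $\mathrm{AC}_\omega(\mathbb{R})$ (available under $\mathrm{AD}$);
\item thin their first moves $s^\sigma\supseteq\sigma$ to a family of pairwise incompatible strings whose basic sets have dense union, greedily along an enumeration of $\strings$;
\item define $D$ and $F$ piecewise on these sets.
\end{enumerate}
Also, within one strategy the sets $N_{x_p\cat t\cat s}$ for different moves $t$ of II need not be disjoint. They must be thinned as well, or replaced by taking the least $t$ that works.

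The paper avoids all this by reversing the roles. I plays only strings, while II plays pairs $(t_n,a_n)$ and wins iff $(X,A)\in R$. Since I's first move is unrestricted, a single winning strategy $\vartheta$ for II yields a dense $G_\delta$ set $D\subseteq\Cantor$ with $F$ continuous on it, and no choice is used. The decoding takes, at each stage, the least $s$ with $x_{p_n}\cat s\cat t(p_n,s)\subset X$.

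The step you call the main obstacle becomes, in this setup, the proof that I has no winning strategy, and it is simpler than you fear. For a strategy $\varsigma$ of I, index the dense open sets by a position $p$ and the next bit $a$:
\[
O_{p,a}=(\Cantor\setminus N_{x_p})\cup\bigcup_{t\ne\emptyset}N_{x_p\cat t\cat\varsigma(p\cat(t,a))}.
\]
Take $X$ in all of them, and only then choose a witness $A$ for $X$. Build the play by recursion, using $X\in O_{p_n,A(n)}$ at stage $n$. Because positions already record the bits played so far, nothing has to be compatible with all $2^n$ bit patterns at once. The same argument with roles exchanged is exactly what your Step~1 needs.
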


\begin{proof}
  Consider the following game. Players I and II move alternately; I plays
  nonempty strings $s_0,s_1,\dots\in\strings$, and II plays pairs
  $(t_0,a_0),(t_1,a_1),\dots$, where each $t_n\in\strings$ is nonempty and
  $a_n\in\{0,1\}$:
  \[
    \begin{array}{c|ccccc}
      \text{I} & s_0 & & s_1 & & \cdots\\ \hline
      \text{II} & & (t_0,a_0) & & (t_1,a_1) & \cdots
    \end{array}
  \]
  Such a play produces the reals
  $X=s_0\cat t_0\cat s_1\cat t_1\cat\cdots$ and $A=(a_0,a_1,\dots)$, and II
  wins if and only if $(X,A)\in R$. The moves can be coded by integers, so by
  $\mathrm{AD}$ one of the players has a winning strategy. For a finite
  sequence $p$ of moves let $x_p\in\strings$ be the concatenation of the
  strings played in $p$. We fix an enumeration of $\strings$ in order type
  $\omega$; ``least'' refers to it.

  \medskip
  \noindent\emph{Player I has no winning strategy.} Let $\varsigma$ be a
  strategy for I. For every position $p$ in which I has just moved according to
  $\varsigma$, and every $a\in\{0,1\}$, let
  \[
    O_{p,a}=(\Cantor\setminus N_{x_p})\ \cup\bigcup_{t\ne\emptyset}
    N_{x_p\cat t\cat\varsigma(p\cat(t,a))},
  \]
  where $\varsigma(p\cat(t,a))$ is the string with which $\varsigma$ answers
  the move $(t,a)$ of II in position $p$. This set is open, and it is dense:
  a basic open set $N_\nu$ with $\nu$ incompatible with $x_p$ is contained in
  it, and if $\nu$ is compatible with $x_p$ then $N_\nu$ contains $N_{x_p\cat
  t}$ for some nonempty $t$, hence it contains
  $N_{x_p\cat t\cat\varsigma(p\cat(t,a))}$. There are countably many pairs
  $(p,a)$, so by the Baire category theorem there is $X$ in all the sets
  $O_{p,a}$ with $X\supset s_0$, where $s_0$ is the first move of $\varsigma$.
  By hypothesis there is $A$ with $(X,A)\in R$. We define a play against
  $\varsigma$ by recursion, maintaining that the position $p_n$ after I's
  $n$th move $s_n$ satisfies $x_{p_n}\subset X$: this holds for
  $p_0=(s_0)$, and given $p_n$, since $X\in O_{p_n,A(n)}\cap N_{x_{p_n}}$ there
  is a nonempty $t$ with
  $x_{p_n}\cat t\cat\varsigma(p_n\cat(t,A(n)))\subset X$; let II play
  $(t,A(n))$ for the least such $t$. The resulting play is consistent with
  $\varsigma$ and produces $X$ and $A$, so II wins it. Hence $\varsigma$ is not
  a winning strategy.

  \medskip
  \noindent\emph{A winning strategy for II gives $F$.} By $\mathrm{AD}$, II has
  a winning strategy $\vartheta$. For every position $p$ in which II has just moved
  according to $\vartheta$ (including the empty position), let
  \[
    O_p=(\Cantor\setminus N_{x_p})\ \cup\bigcup_{s\ne\emptyset}
    N_{x_p\cat s\cat t(p,s)},
  \]
  where $(t(p,s),a(p,s))=\vartheta(p\cat s)$ is the answer of $\vartheta$ to the move $s$
  of I in position $p$. As before, each $O_p$ is open and dense. Let
  $D=\bigcap_p O_p$, a dense $G_\delta$ set. For $X\in D$ define a play by
  recursion, maintaining that the position $p_n$ after II's $n$th move satisfies
  $x_{p_n}\subset X$: $p_0$ is the empty position, and given $p_n$, since
  $X\in O_{p_n}\cap N_{x_{p_n}}$ there is a nonempty $s$ with
  $x_{p_n}\cat s\cat t(p_n,s)\subset X$; let I play the least such $s$ and let
  II answer by $\vartheta$. This play is consistent with $\vartheta$ and produces $X$
  together with some $A$, which we call $F(X)$. Since $\vartheta$ is winning,
  $(X,F(X))\in R$. Finally, $F$ is continuous on $D$: each position $p_n$,
  and hence $F(X)\res n$, is determined by a finite initial segment of $X$,
  because the least $s$ with $x_{p_n}\cat s\cat t(p_n,s)\subset X$ is
  determined by a finite initial segment of $X$ (to see that finitely many
  smaller candidates fail and that this one succeeds).
\end{proof}

\begin{proof}[Proof of Theorem~\ref{thm:ad}]
  Let $\pi$ be an automorphism of $\Dtt$ and let
  $R=\{(X,A):\dg(A)=\pi(\dg(X))\}$. Every section of $R$ is nonempty, so by
  Lemma~\ref{lem:cu} there are a dense $G_\delta$ set $D$ and a continuous
  $F\colon D\to\Cantor$ which induces $\pi$ on $D$. Thus the hypothesis of
  Theorem~\ref{thm:main} holds, and it remains to observe that the proof of
  Theorem~\ref{thm:main} given in Section~\ref{sec:general} goes through in
  $\mathrm{ZF}+\mathrm{AD}$. We go through the places where the axiom of
  choice might be used.
  \begin{itemize}
  \item The tt-functionals are indexed by a subset of $\omega$, so all
    ``countable'' unions and intersections over tt-functionals are over
    explicitly enumerated families, and each truth-table degree is countable.
  \item A dense $G_\delta$ subset of $\Cantor$ or of $\Cantor\times\Cantor$, or
    of a basic open subset of one of these spaces, will always be given
    together with a sequence of dense open sets whose intersection it is. For
    such sets, and for their relatively open subsets, the Baire category
    theorem is provable in $\mathrm{ZF}$, since these spaces have explicit
    countable bases. Lemma~\ref{lem:unif} is applied only to spaces of this
    kind (the general fact that Polish spaces are Baire is not needed).
    Similarly, a countable subset of such a space is meager, so that a set
    which is comeager in a nonempty open set is uncountable; this is used in
    Lemma~\ref{lem:split}.
  \item All dense $G_\delta$ and comeager sets occurring in
    Lemmas~\ref{lem:E} and~\ref{lem:split} and in
    Proposition~\ref{prop:coding} are obtained from the given sequence of dense
    open sets with intersection $D$ by explicit operations: preimages under
    homeomorphisms and open maps, and intersections indexed by tt-functionals,
    by strings, or by the group $C_X$, which is explicitly enumerated by words
    in its generators. In Proposition~\ref{prop:coding} the set $\mathcal{G}_X$
    is defined for each $X$ without any choice.
  \item In Lemma~\ref{lem:remove} the sets $C_\Phi$ are explicitly indexed. If
    all of them were nowhere dense, their union would be disjoint from a
    countable intersection of dense open subsets of $U$, by the Baire category
    theorem; so it could not contain a comeager subset of $U$. Only one
    sequence of dense open sets witnessing comeagerness is used.
  \item In Proposition~\ref{prop:half} the reals $A$ and $B$ are chosen for a
    single $X$ at a time.
  \item In Proposition~\ref{prop:inverseD} we need that the sets $B_\Phi$ have
    the Baire property, which under $\mathrm{AD}$ holds for all sets, so that
    Lemmas~\ref{lem:meet} and~\ref{lem:graph} are not needed; and that a
    countable union of meager sets is meager, which follows from
    $\mathrm{AC}_\omega(\mathbb{R})$. A nonmeager set $B_\Phi$ with the Baire
    property contains a dense $G_\delta$ subset $D_1$ of some $N_\sigma$; here
    a single sequence of dense open sets witnessing this is fixed, and it is
    the given sequence for $\widetilde D$ in the second application of
    Section~\ref{sec:general}, to $\pi^{-1}$, $\widetilde D$ and
    $\Psi\res\widetilde D$.
  \item In the proof of Theorem~\ref{thm:main} itself, a comeager set contains
    a dense $G_\delta$ set by definition, and again a single witnessing
    sequence is fixed.
  \end{itemize}
  Hence $\pi$ is the identity.
\end{proof}

\begin{remark}
  The proof of Theorem~\ref{thm:ad} uses $\mathrm{AD}$ only through three of its
  consequences: every set of reals has the Baire property, comeager
  uniformization (Lemma~\ref{lem:cu}), and $\mathrm{AC}_\omega(\mathbb{R})$.
  So the conclusion holds in any model of $\mathrm{ZF}$ in which these three
  statements hold.
\end{remark}

\begin{remark}
  Assume $\mathrm{ZFC}$ together with large cardinal hypotheses implying that
  $\mathrm{AD}$ holds in $L(\mathbb{R})$. Since $L(\mathbb{R})$ contains all
  reals, $\Dtt$ and its ordering are the same in $L(\mathbb{R})$ as in the
  universe. So by Theorem~\ref{thm:ad} applied in $L(\mathbb{R})$, no
  nontrivial automorphism of $\Dtt$ belongs to $L(\mathbb{R})$.
\end{remark}

\section{Other reducibilities}\label{sec:other}

The proofs above use only a few properties of the class of tt-functionals. In
this section we isolate these properties and observe that they hold for weak
truth-table reducibility, so that all the results of this note hold for the
wtt-degrees as well.

\subsection{Admissible classes of functionals}

By a \emph{partial functional} we mean a function $\Phi$ from a set
$\operatorname{dom}(\Phi)\subseteq\Cantor$ to $\Cantor$. A partial functional
$\Phi'$ \emph{extends} $\Phi$ if
$\operatorname{dom}(\Phi)\subseteq\operatorname{dom}(\Phi')$ and
$\Phi'(Y)=\Phi(Y)$ for all $Y\in\operatorname{dom}(\Phi)$. We use the following
operations on partial functionals $\Phi,\Psi$, where $j\in\omega$.
\begin{itemize}
\item \emph{Composition}: $(\Phi\circ\Psi)(Y)=\Phi(\Psi(Y))$, with domain
  $\{Y\in\operatorname{dom}(\Psi):\Psi(Y)\in\operatorname{dom}(\Phi)\}$.
\item \emph{Pairing}: $\langle\Phi,\Psi\rangle(Y)=\Phi(Y)\oplus\Psi(Y)$, with
  domain $\operatorname{dom}(\Phi)\cap\operatorname{dom}(\Psi)$.
\item \emph{Case distinction}: $[\Phi,\Psi]_j(Y)=\Phi(Y)$ if $Y(j)=0$ and
  $[\Phi,\Psi]_j(Y)=\Psi(Y)$ if $Y(j)=1$, with domain
  $\{Y\in\operatorname{dom}(\Phi):Y(j)=0\}\cup
  \{Y\in\operatorname{dom}(\Psi):Y(j)=1\}$.
\item \emph{Iteration}:
  $\operatorname{It}_j(\Phi,\Psi)(Y)=\bigl(\Psi(\Phi^{m}(Y))(j)\bigr)_{m\in\omega}$,
  where $\Phi^{m}$ is the $m$-fold composition of $\Phi$ with itself; the
  domain consists of all $Y$ such that for every $m$,
  $Y\in\operatorname{dom}(\Phi^{m})$ and
  $\Phi^{m}(Y)\in\operatorname{dom}(\Psi)$.
\end{itemize}

\begin{definition}\label{def:adm}
  An \emph{admissible class} is a countable set $\mathfrak{F}$ of partial
  functionals with the following properties.
  \begin{itemize}
  \item[\textup{(A0)}] For every $\Phi\in\mathfrak{F}$, the set
    $\operatorname{dom}(\Phi)$ is closed, and there is an oracle Turing machine
    which, for every $Y\in\operatorname{dom}(\Phi)$, halts on every input when
    run with oracle $Y$ and computes $\Phi(Y)$.
  \item[\textup{(A1)}] Every tt-functional belongs to $\mathfrak{F}$.
  \item[\textup{(A2)}] For all $\Phi,\Psi\in\mathfrak{F}$ and all $j\in\omega$,
    each of the partial functionals $\Phi\circ\Psi$, $\langle\Phi,\Psi\rangle$,
    $[\Phi,\Psi]_j$ and $\operatorname{It}_j(\Phi,\Psi)$ is extended by some
    element of $\mathfrak{F}$.
  \end{itemize}
  For an admissible class $\mathfrak{F}$ and $A,B\in\Cantor$ we write
  $A\le_{\mathfrak{F}}B$ if there is $\Phi\in\mathfrak{F}$ with
  $B\in\operatorname{dom}(\Phi)$ and $\Phi(B)=A$. We write
  $\equiv_{\mathfrak{F}}$ for the induced equivalence relation and
  $\mathcal{D}_{\mathfrak{F}}$ for the partial order of its equivalence
  classes, the \emph{$\mathfrak{F}$-degrees}.
\end{definition}

The class of all tt-functionals is admissible, and for it
$\le_{\mathfrak{F}}$ is $\ltt$. The following lemma collects what replaces, for
a general admissible class, the ``evident facts'' about tt-functionals listed
in Section~\ref{sec:prelim}.

\begin{lemma}\label{lem:adm}
  Let $\mathfrak{F}$ be an admissible class.
  \begin{enumerate}
  \item\label{adm:order} The relation $\le_{\mathfrak{F}}$ is reflexive and
    transitive, and for all $A,B$: $A\ltt B$ implies $A\le_{\mathfrak{F}}B$,
    and $A\le_{\mathfrak{F}}B$ implies $A\le_{\mathrm{T}}B$. Every
    $\mathfrak{F}$-degree is countable. The degree of $A\oplus B$ is the least
    upper bound of the degrees of $A$ and $B$ in $\mathcal{D}_{\mathfrak{F}}$,
    and the computable reals form the least element of
    $\mathcal{D}_{\mathfrak{F}}$.
  \item\label{adm:closed} Every $\Phi\in\mathfrak{F}$ is continuous on
    $\operatorname{dom}(\Phi)$, and its graph, the set of pairs $(Y,A)$ with
    $Y\in\operatorname{dom}(\Phi)$ and $\Phi(Y)=A$, is a closed subset of
    $\Cantor\times\Cantor$. Consequently, if $\mathcal{S}$ is a
    topological space and $\varphi,\psi\colon\mathcal{S}\to\Cantor$ are
    continuous, then the set of all $x\in\mathcal{S}$ such that
    $\psi(x)\in\operatorname{dom}(\Phi)$ and $\Phi(\psi(x))=\varphi(x)$ is
    closed in $\mathcal{S}$.
  \item\label{adm:total} If $\Phi\in\mathfrak{F}$, $\sigma\in\strings$ and
    $N_\sigma\subseteq\operatorname{dom}(\Phi)$, then $Z\mapsto\Phi(\sigma\cat Z)$
    is a tt-functional, and $\Phi$ agrees on $N_\sigma$ with a tt-functional.
  \end{enumerate}
\end{lemma}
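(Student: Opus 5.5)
We prove the three items in order, using (A0)--(A2) directly.

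For \eqref{adm:order}: reflexivity and the implication $A\ltt B\Rightarrow A\le_{\mathfrak{F}}B$ come from (A1), since the identity and every tt-functional are total and lie in $\mathfrak{F}$. For transitivity, let $A=\Phi(B)$ and $B=\Psi(C)$ with $\Phi,\Psi\in\mathfrak{F}$, $C\in\operatorname{dom}(\Psi)$, $B\in\operatorname{dom}(\Phi)$. Then $C\in\operatorname{dom}(\Phi\circ\Psi)$, and by (A2) some element of $\mathfrak{F}$ extends $\Phi\circ\Psi$, so it maps $C$ to $A$. Next, $A\le_{\mathfrak{F}}B$ implies $A\le_{\mathrm{T}}B$, because the machine of (A0) run with oracle $B$ computes $A$. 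Since there are only countably many machines, $\mathfrak{F}$ is countable, so each degree is countable. The join is an upper bound via the tt-functionals projecting $A\oplus B$ to $A$ and to $B$. It is the least upper bound because $A=\Phi(C)$ and $B=\Psi(C)$ give $C\in\operatorname{dom}\langle\Phi,\Psi\rangle$, and pairing (A2) yields an element of $\mathfrak{F}$ sending $C$ to $A\oplus B$. For the least element: a computable $A$ is $\Phi(B)$ for a constant tt-functional $\Phi$. Conversely, $A\le_{\mathfrak{F}}\emptyset$ for computable $\emptyset$ implies $A\le_{\mathrm{T}}\emptyset$, so $A$ is computable.

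For \eqref{adm:closed}: fix $\Phi$ and its machine $M$ from (A0). For $Y\in\operatorname{dom}(\Phi)$ and each $n$, the computation $M^Y(n)$ halts after querying finitely many bits of $Y$. Hence $\Phi(Y)(n)$ is determined by some $Y\res u$ for all $Y'\in\operatorname{dom}(\Phi)$ extending $Y\res u$, which is continuity on the domain. For the graph, let $(Y_s,A_s)\to(Y,A)$ with $\Phi(Y_s)=A_s$. Then $Y\in\operatorname{dom}(\Phi)$ because the domain is closed, and continuity gives $\Phi(Y)=\lim\Phi(Y_s)=A$. The set in the final clause is the preimage of this closed graph under the continuous map $x\mapsto(\psi(x),\varphi(x))$, hence closed.

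For \eqref{adm:total}: if $N_\sigma\subseteq\operatorname{dom}(\Phi)$, the machine $M$ with oracle $\sigma\cat Z$ halts on every input for every $Z$. So $Z\mapsto\Phi(\sigma\cat Z)$ is computed by an oracle machine that simulates $M$ on $\sigma\cat Z$ and halts on every oracle and input, i.e.\ it is a tt-functional $\Lambda$. Then $Y\mapsto\Lambda(S^{|\sigma|}(Y))$ is a tt-functional, being a composition with an iterated shift, and it agrees with $\Phi$ on $N_\sigma$. The only point needing care is that ``halts on all oracles'' genuinely holds for the modified machine. It does, because every oracle presented to $M$ is of the form $\sigma\cat Z\in\operatorname{dom}(\Phi)$, so no compactness argument is needed.
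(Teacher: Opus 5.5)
Your proof is correct and follows the paper's argument item by item. The only variation is in item~(3), where you extend $Z\mapsto\Phi(\sigma\cat Z)$ to a tt-functional agreeing with $\Phi$ on $N_\sigma$ as $\Lambda\circ S^{|\sigma|}$, rather than by the paper's case distinction with value $0^{\omega}$ off $N_\sigma$; both work equally well.

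One small correction: countability of $\mathfrak{F}$ is part of the definition of an admissible class and does not follow from there being countably many machines, since one machine can compute many functionals with different closed domains. Countability of each degree also follows directly from the fact that $A\le_{\mathfrak{F}}B$ implies $A\le_{\mathrm{T}}B$.
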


\begin{proof}
  \eqref{adm:order} The identity is a tt-functional, so $\le_{\mathfrak{F}}$ is
  reflexive by (A1). If $A=\Phi(B)$ and $B=\Psi(C)$ with
  $\Phi,\Psi\in\mathfrak{F}$, then $C\in\operatorname{dom}(\Phi\circ\Psi)$ and
  $(\Phi\circ\Psi)(C)=A$, so $A\le_{\mathfrak{F}}C$ by (A2). The implication
  from $\ltt$ is (A1) and the implication to $\le_{\mathrm{T}}$ is (A0). Degrees
  are countable because $\mathfrak{F}$ is. The maps $A\oplus B\mapsto A$ and
  $A\oplus B\mapsto B$ are tt-functionals, so the degree of $A\oplus B$ is an upper
  bound; and if $A=\Phi(C)$ and $B=\Psi(C)$ then
  $A\oplus B=\langle\Phi,\Psi\rangle(C)$, so it is the least upper bound by
  (A2). A computable real is the value of a constant tt-functional, so it is
  $\le_{\mathfrak{F}}$ every real; and a real which is $\le_{\mathfrak{F}}$ a
  computable real is Turing reducible to it, hence computable.

  \eqref{adm:closed} By (A0), for $Y\in\operatorname{dom}(\Phi)$ each bit of
  $\Phi(Y)$ is computed by a halting oracle computation, which uses only
  finitely much of $Y$; so $\Phi$ is continuous on $\operatorname{dom}(\Phi)$.
  If $(Y_k,A_k)$ is a sequence in the graph converging to $(Y,A)$, then
  $Y\in\operatorname{dom}(\Phi)$ because the domain is closed, and
  $A_k=\Phi(Y_k)\to\Phi(Y)$ by continuity, so $A=\Phi(Y)$. Hence the graph is
  closed. The last statement follows, since the set in question is the
  preimage of the graph under the continuous map $x\mapsto(\psi(x),\varphi(x))$.

  \eqref{adm:total} By (A0) there is an oracle machine which halts on every
  input with every oracle in $N_\sigma$ and computes $\Phi$ there. Running it
  with oracle $\sigma\cat Z$ gives a machine which halts on every input with
  every oracle $Z$; so $Z\mapsto\Phi(\sigma\cat Z)$ is a tt-functional. The
  function which maps $Y\in N_\sigma$ to $\Phi(Y)$ and every other $Y$ to
  $0^{\omega}$ is then also a tt-functional.
\end{proof}

The next lemma does not involve an admissible class. It will show that the
left inverses in Theorem~\ref{thm:cts} can always be taken total.

\begin{lemma}\label{lem:inv}
  Let $\Lambda$ be a tt-functional and let $\tau\in\strings$ be such that
  $\Lambda$ is injective on $N_\tau$. Then there is a tt-functional $\Gamma$
  such that $\Gamma(\Lambda(Z))=Z$ for all $Z\in N_\tau$.
\end{lemma}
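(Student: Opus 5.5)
The plan is to use compactness of $N_\tau$ together with injectivity to turn the inverse of $\Lambda$ on the compact set $K=\Lambda[N_\tau]$ into a uniformly computable procedure that searches with a computable modulus. Since $\Lambda$ is continuous and injective on the compact set $N_\tau$, the map $\Lambda\res N_\tau$ is a homeomorphism onto the compact set $K$. Hence its inverse is uniformly continuous: for each $n$ there is $u(n)$ such that whenever $Z,Z'\in N_\tau$ and $\Lambda(Z)\res u(n)=\Lambda(Z')\res u(n)$, we have $Z\res n=Z'\res n$.

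The first key step is to show that such a modulus $u$ can be found computably. Since $\Lambda$ is a tt-functional, for each $m$ there is a computable bound on the use of $\Lambda(Z)\res m$, uniformly in $Z$. So the finite set of pairs $(\zeta,\lambda)$, where $\zeta$ is a string extending $\tau$ of length equal to that use and $\lambda=\Lambda(\zeta)\res m$, can be computed from $m$. For fixed $n$ and $m$, the condition ``any two $\zeta,\zeta'$ with the same $\lambda$ agree below $n$'' is decidable. This finite condition holds for some $m$ exactly when the uniform continuity statement holds at level $m$, by a compactness (König) argument: if it failed for all $m$, we would get $Z\ne Z'$ in $N_\tau$ with $\Lambda(Z)=\Lambda(Z')$. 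So $u(n)$ can be defined as the least such $m$, and the search terminates by compactness.

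Given $u$, define $\Gamma(A)(n)$ as follows. Compute $m=u(n+1)$ and look for a string $\zeta\supseteq\tau$ in the finite list with $\Lambda(\zeta)\res m=A\res m$. If one exists, output $\zeta(n)$; by the choice of $m$, all such $\zeta$ agree on bit $n$. If none exists, output $0$. Also put $\Gamma(A)(n)=\tau(n)$ for $n<|\tau|$, which is consistent with the general rule anyway. Every step is a bounded computation, so $\Gamma$ halts on every oracle and every input and is a tt-functional. If $A=\Lambda(Z)$ with $Z\in N_\tau$, then $\zeta=Z\res(\text{use})$ is a witness. Any witness $\zeta$ extends to some $Z'\in N_\tau$ with $\Lambda(Z')\res m=A\res m$, so $Z'\res(n+1)=Z\res(n+1)$, and therefore $\Gamma(A)(n)=Z(n)$.

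The main obstacle is the first key step: checking carefully that the finite condition at level $m$ really captures uniform continuity on all of $N_\tau$, rather than just on the chosen finite representatives. This needs two facts. First, $\Lambda(\zeta)\res m$ is well defined for a string $\zeta$ whose length equals the use bound, so that it depends only on $\zeta$. Second, the failure of the condition for every $m$ yields, by compactness of $N_\tau\times N_\tau$, a genuine pair $Z\ne Z'$ that injectivity rules out.
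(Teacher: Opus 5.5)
Your proposal is correct and is essentially the paper's proof: a computable modulus is obtained as the least level satisfying a decidable condition on finitely many strings extending $\tau$, its existence follows from compactness of $N_\tau$ and injectivity, and $\Gamma$ then runs a bounded search and outputs the $n$th bit of a matching string. The one detail to tighten is the length of the candidate strings. Take it to be $\max(\mu(m),n+1,|\tau|)$, where $\mu(m)$ is the use bound for $\Lambda(Z)\res m$; the paper does exactly this with $L(n,k)=\max(\mu(k),n,|\tau|)$. With length exactly $\mu(m)$, the finite condition can hold vacuously (no string of that length extends $\tau$), or $\zeta(n)$ can be undefined, and then $\Gamma$ could output a wrong bit.
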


\begin{proof}
  Let $\mu$ be a computable nondecreasing function such that $\Lambda(Z)\res k$
  depends only on $Z\res\mu(k)$, for all $k$; such a function exists because
  $\Lambda$ is total. For $n,k\in\omega$ let $L(n,k)=\max(\mu(k),n,|\tau|)$, and
  for a string $\zeta$ of length at least $\mu(k)$ let $\Lambda(\zeta)\res k$
  denote the common value of $\Lambda(Z)\res k$ for $Z\supset\zeta$. Consider
  the decidable property
  \begin{quote}
    $P(n,k)$: for all strings $\zeta,\zeta'\supseteq\tau$ of length $L(n,k)$,
    if $\Lambda(\zeta)\res k=\Lambda(\zeta')\res k$ then
    $\zeta\res n=\zeta'\res n$.
  \end{quote}
  Equivalently, $P(n,k)$ says that for all $Z,Z'\in N_\tau$, if
  $\Lambda(Z)\res k=\Lambda(Z')\res k$ then $Z\res n=Z'\res n$. For every $n$
  there is $k$ with $P(n,k)$: otherwise, for every $k$ there are
  $Z_k,Z_k'\in N_\tau$ with $\Lambda(Z_k)\res k=\Lambda(Z_k')\res k$ and
  $Z_k\res n\ne Z_k'\res n$; by compactness we may pass to a subsequence along
  which $Z_k\to Z$ and $Z_k'\to Z'$, and then $Z,Z'\in N_\tau$,
  $Z\res n\ne Z'\res n$ and, by continuity, $\Lambda(Z)=\Lambda(Z')$,
  contradicting injectivity. Let $k(n)$ be the least $k$ with $P(n,k)$; this is
  a computable function.

  Define $\Gamma(A)(n)$ as follows. Let $k=k(n+1)$ and $L=L(n+1,k)$. If there is
  a string $\zeta\supseteq\tau$ of length $L$ with
  $\Lambda(\zeta)\res k=A\res k$, let $\Gamma(A)(n)=\zeta(n)$ for the least
  such $\zeta$; otherwise let $\Gamma(A)(n)=0$. Then $\Gamma$ is a
  tt-functional. If $A=\Lambda(Z)$ with $Z\in N_\tau$, then $\zeta=Z\res L$ is
  as required, and by $P(n+1,k)$ every $\zeta$ as required satisfies
  $\zeta\res(n+1)=Z\res(n+1)$; so $\Gamma(A)(n)=Z(n)$.
\end{proof}

\subsection{The results for admissible classes}

Let $\mathfrak{F}$ be an admissible class. Automorphisms of
$\mathcal{D}_{\mathfrak{F}}$, and functions inducing them (on a set $D$), are
defined as for $\Dtt$.

\begin{theorem}\label{thm:adm}
  Let $\mathfrak{F}$ be an admissible class. Then the following hold.
  \begin{enumerate}
  \item\label{at:main} If an automorphism $\pi$ of $\mathcal{D}_{\mathfrak{F}}$
    is induced on a comeager set by a continuous function, then $\pi$ is the
    identity. In particular this holds if $\pi$ is induced by a Baire
    measurable function.
  \item\label{at:cts} If an automorphism $\pi$ of
    $\mathcal{D}_{\mathfrak{F}}$ is induced by a continuous function
    $F\colon\Cantor\to\Cantor$, then every nonempty open subset of $\Cantor$
    contains a basic open set $N_\tau$ such that there are tt-functionals
    $\Lambda$ and $\Gamma$ with $F(Z)=\Lambda(Z)$ and $\Gamma(F(Z))=Z$ for all
    $Z\in N_\tau$. (That is, Theorem~\ref{thm:cts} holds for
    $\mathcal{D}_{\mathfrak{F}}$ as stated.)
  \item\label{at:perm} If $h$ is a permutation of $\omega$, then
    $X\mapsto X\circ h$ induces an automorphism of
    $\mathcal{D}_{\mathfrak{F}}$ if and only if $h$ is computable.
  \item\label{at:ad} In $\mathrm{ZF}+\mathrm{AD}$, every automorphism of
    $\mathcal{D}_{\mathfrak{F}}$ is the identity.
  \end{enumerate}
\end{theorem}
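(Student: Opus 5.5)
The plan is to rerun the proofs of Sections~\ref{sec:prelim}--\ref{sec:ad} with $\le_{\mathfrak{F}}$ in place of $\ltt$, and to check that each use of a tt-functional is covered by Lemma~\ref{lem:adm} or by (A2). The $2$-adic operations, the shift, halving, translations by computable reals, and the maps $Z\mapsto\sigma\cat Z$ are tt-functionals, so they lie in $\mathfrak{F}$ by (A1). Lemmas~\ref{lem:basic} and~\ref{lem:basicD} use only that $\mathcal{D}_{\mathfrak{F}}$ has joins given by $\oplus$ and a least element consisting of the computable reals, and that degrees are countable; all of this is Lemma~\ref{lem:adm}\eqref{adm:order}. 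Lemma~\ref{lem:unif} and Lemma~\ref{lem:remove} need the sets $C_\Phi$ to be closed, which is Lemma~\ref{lem:adm}\eqref{adm:closed}. The uniformized functional then has $\psi(x)$ in its domain for all $x\in V$, and in Lemma~\ref{lem:remove} one obtains $N_\gamma\times N_\delta$ inside the domain region, so plugging in computable reals still gives $B\le_{\mathfrak{F}}X$. Lemmas~\ref{lem:meet} and~\ref{lem:graph} go through in the same way: the graph of each $\Phi\in\mathfrak{F}$ is closed, so $\le_{\mathfrak{F}}$ is $F_\sigma$, and the equality conditions become closed conditions intersected with closed domain conditions.

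The step that needs care is the construction of the decoding functionals $\Theta$ and $\Gamma$ in Proposition~\ref{prop:lower} and Proposition~\ref{prop:coding}, because $\Padd,\Psub,\Phalf$ are now only partial. Joining with the computable parameter $P$ or $P_i$ is a pairing with a constant tt-functional. The compositions defining $A'$ and $\Theta$ are obtained by composition and pairing, and the case split on $A(j)$ or on $\varepsilon(Z)$ is a case distinction; for the $\varepsilon$ case I will first pair with the tt-functional computing the bit and then use $[\cdot,\cdot]_j$ on that coordinate. The final decoder is $\operatorname{It}_j$. By (A2) each of these is extended by a member of $\mathfrak{F}$. Extensions agree with the intended values wherever the latter are defined, and the verification in \eqref{eq:shift} and \eqref{eq:bstep} shows that every intermediate value lies in the relevant domain, since each one equals some $F(\cdot)$ produced by a relation valid on $E$. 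Hence $X\le_{\mathfrak{F}}a(X_0)\oplus P$. The definition of $\varepsilon$ takes a disjunction over the finitely many $c<2^{d-1}$; I will realize this by iterated case distinctions on the bits of a pairing of the finitely many tested reals.

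With this in hand, \eqref{at:main} follows exactly as in Section~\ref{sec:general}. In Proposition~\ref{prop:inverseD} the functional $\Psi(Z)=\Phi(\sigma\cat Z)$ must be total on $\widetilde D$; it is a partial functional in $\mathfrak{F}$ defined on $\widetilde D$, which is enough for Proposition~\ref{prop:twosided}, since only continuity on a comeager set is needed.

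For \eqref{at:cts}, the uniformization step yields $\Lambda\in\mathfrak{F}$ with $N_{\rho'}\subseteq\operatorname{dom}(\Lambda)$. By Lemma~\ref{lem:adm}\eqref{adm:total}, $\Lambda$ agrees with a tt-functional on $N_{\rho'}$. Proposition~\ref{prop:lower} gives $\tau$ on which $F$ is injective, and Lemma~\ref{lem:inv} then supplies a total left inverse; this avoids having to make $\Gamma_0$ total by hand. The same argument shows that Proposition~\ref{prop:inverse} gives an $\mathfrak{F}$-functional with $N_\sigma$ in its domain, hence a tt-functional, so the continuous case closes as before. Item \eqref{at:perm} then follows from \eqref{at:cts} by the argument of Corollary~\ref{cor:perm}. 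For the converse direction of \eqref{at:perm}, a computable $h$ gives $X\circ h\ett X$, so $X\circ h\equiv_{\mathfrak{F}}X$ by (A1). Item \eqref{at:ad} follows from \eqref{at:main} together with Lemma~\ref{lem:cu}, using the bookkeeping in the proof of Theorem~\ref{thm:ad}; $\mathfrak{F}$ is countable, so it is enumerated once and for all.

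The main obstacle is the domain bookkeeping in the coding lemma: making sure every intermediate real fed to a partial functional lies in its domain. This holds because the domains are closed, so the relations can be uniformized, and because the relations on $E$ guarantee membership.
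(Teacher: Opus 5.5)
Your proposal is correct and follows essentially the same route as the paper's proof. You rerun Sections~\ref{sec:prelim}--\ref{sec:ad} with $\le_{\mathfrak{F}}$, read equations as including domain membership, and get closedness from Lemma~\ref{lem:adm}\eqref{adm:closed}; you build the decoders via (A2), with the relations on $E$ guaranteeing domain membership; and you upgrade to tt-functionals via Lemma~\ref{lem:adm}\eqref{adm:total} and Lemma~\ref{lem:inv} in the continuous case. Your deviations are harmless. The paper handles the disjunction in $\varepsilon$ with a single bit-inspecting tt-functional applied to the pairing of the $\Xi_c$ instead of iterated case distinctions. The functional computing $\varepsilon$ is (extended by) an element of $\mathfrak{F}$, not a tt-functional as you wrote. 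In Proposition~\ref{prop:inverseD} the paper uses density of $D_1$ and closedness of $\operatorname{dom}(\Phi)$ to make $\Psi$ a tt-functional, whereas your partial $\Psi$, continuous on $\widetilde D$, already suffices for Proposition~\ref{prop:twosided}.
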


\begin{proof}
  The proofs of Sections~\ref{sec:prelim}--\ref{sec:ad} go through, with the
  changes that we now list. Throughout, ``tt-functional'' is replaced by
  ``element of $\mathfrak{F}$'', and $\ltt$, $\ett$, $\dg$, $\Dtt$ by the
  corresponding notions for $\mathfrak{F}$, \emph{except} where a functional is
  asserted to be a tt-functional because it is one of the specific total
  computable maps of Lemma~\ref{lem:2adic} or a map such as
  $Z\mapsto\sigma\cat Z$, $Z\mapsto Z\oplus P$ with $P$ computable, or a
  projection; these are tt-functionals, and belong to $\mathfrak{F}$ by (A1).
  For $\Phi\in\mathfrak{F}$, an equation ``$\Phi(Y)=A$'' is read as
  ``$Y\in\operatorname{dom}(\Phi)$ and $\Phi(Y)=A$'', and ``$\Phi(Y)\ne A$'' as
  its negation. In particular the relations \Radd, \Rsub, \Rhalf\ include the
  assertion that their right-hand sides are defined.

  \medskip
  \noindent\emph{Section~\ref{sec:prelim}.} The evident facts about
  tt-functionals are replaced by Lemma~\ref{lem:adm} and (A2).
  Lemmas~\ref{lem:basic} and~\ref{lem:basicD} use only
  Lemma~\ref{lem:adm}\eqref{adm:order}. In Lemma~\ref{lem:unif} the sets
  $C_\Phi$ are closed by Lemma~\ref{lem:adm}\eqref{adm:closed}. The reductions
  \eqref{eq:args} are truth-table reductions, hence
  $\le_{\mathfrak{F}}$-reductions. Lemma~\ref{lem:trans} is unchanged.

  \medskip
  \noindent\emph{Section~\ref{sec:cts}.} Steps~1 and~2 of the proof of
  Proposition~\ref{prop:lower} are unchanged. In Step~3, the functional
  $\Theta$ is obtained from $\Padd$, $\Psub$, $\Phalf$, the identity, and
  constant functionals with the computable values $P_1,P_2,P_3$ by
  compositions, pairings and one case distinction; by (A2) and (A1) it is
  extended by an element of $\mathfrak{F}$, which we again call $\Theta$. (If
  $\Phi'$ extends $\Phi$ and $\Psi'$ extends $\Psi$, then each of
  $\Phi'\circ\Psi'$, $\langle\Phi',\Psi'\rangle$, $[\Phi',\Psi']_j$,
  $\operatorname{It}_j(\Phi',\Psi')$ extends the corresponding functional for
  $\Phi$ and $\Psi$; so extensions may be taken at each stage of such a
  construction.) The verification of \eqref{eq:shift} is unchanged; it shows in
  particular that $F(\tau\cat W)\in\operatorname{dom}(\Theta)$ for all $W$. In
  Step~4, $\Gamma$ is an element of $\mathfrak{F}$ extending the composition of
  a tt-functional (which adds $b$ to every bit) with
  $\operatorname{It}_j(\Theta,\mathrm{id})$; the computation there shows that
  $F(\tau\cat X)\in\operatorname{dom}(\Gamma)$ and $\Gamma(F(\tau\cat X))=X$.
  So Proposition~\ref{prop:lower} holds, with $\Gamma\in\mathfrak{F}$, and
  Remark~\ref{rem:dichotomy} is unchanged.

  In the proof of Proposition~\ref{prop:inverse}, the set
  $C_{\Phi,\Lambda,\Lambda'}$ is the set of $Z\in\operatorname{dom}(\Phi)$ such
  that $(F(\Phi(Z)),Z)$ is in the graph of $\Lambda$ and $(Z,F(\Phi(Z)))$ is
  in the graph of $\Lambda'$. It is closed, because
  $\operatorname{dom}(\Phi)$ is closed, $F\circ\Phi$ is continuous on it, and
  the graphs are closed (Lemma~\ref{lem:adm}\eqref{adm:closed}). If
  $N_\sigma\subseteq C_{\Phi,\Lambda,\Lambda'}$ then
  $N_\sigma\subseteq\operatorname{dom}(\Phi)$, so
  $\widetilde F(Z)=\Phi(\sigma\cat Z)$ is a tt-functional by
  Lemma~\ref{lem:adm}\eqref{adm:total}. Thus $\pi^{-1}$ is induced by a
  tt-functional, and the proof of Theorem~\ref{thm:cts} goes through: in the
  ``moreover'' part, $\Lambda\in\mathfrak{F}$ has
  $N_{\rho'}\subseteq\operatorname{dom}(\Lambda)$, so by
  Lemma~\ref{lem:adm}\eqref{adm:total} it agrees on $N_{\rho'}$ with a
  tt-functional, which we again call $\Lambda$; and some element of
  $\mathfrak{F}$ extends $A\mapsto\tau\cat\Gamma_0(A)$ and so is a left inverse
  of $F$ on $N_\tau$. In particular $F$, and hence $\Lambda$, is injective on
  $N_\tau$, and Lemma~\ref{lem:inv} gives a tt-functional $\Gamma$ with
  $\Gamma(F(Z))=\Gamma(\Lambda(Z))=Z$ for all $Z\in N_\tau$. This proves
  \eqref{at:cts}, and that $\pi$ is the identity in this case; the remark
  following the proof of Theorem~\ref{thm:cts} also remains true as stated.
  The proof of Corollary~\ref{cor:perm} is then unchanged, since there
  $\Lambda$ is a tt-functional; this proves \eqref{at:perm}.

  \medskip
  \noindent\emph{Section~\ref{sec:general}.} Lemmas~\ref{lem:E}
  and~\ref{lem:split} are unchanged. In the proof of
  Proposition~\ref{prop:coding}, the functionals $\widehat\varepsilon$ and
  $\widehat\Theta$ are obtained from $\Padd,\Psub,\Phalf$ and tt-functionals by
  compositions, pairings and a case distinction, as follows. Write $V$ for
  $Z\oplus P$. For each $c<2^{d-1}$ the real
  $\Xi_c(V)=\Psub(\Padd(Z\oplus F(H+\kappa_c))\oplus F(-H))$ is obtained from
  $V$ by compositions and pairings, since $Z$, $F(H+\kappa_c)$ and $F(-H)$ are
  obtained from $V$ by tt-functionals. Then $\widehat\varepsilon(V)$ is
  obtained by applying a tt-functional, which inspects the $j$th bit of each
  component, to the join of the $\Xi_c(V)$; and $\widehat\Theta(V)$ is obtained
  by applying to $\widehat\varepsilon(V)\oplus V$ a case distinction, on the
  bit of this input which is $\widehat\varepsilon(V)(0)$, between two
  functionals built by compositions and pairings. So $\widehat\varepsilon$ and $\widehat\Theta$ are extended by
  elements of $\mathfrak{F}$, and the proof of \eqref{eq:bstep} shows that
  $a(X_m)\oplus P$ lies in their domains. The final decoding functional is an
  iteration, covered by (A2). In Lemma~\ref{lem:remove} the sets $C_\Phi$ are
  closed by Lemma~\ref{lem:adm}\eqref{adm:closed}; if
  $N_\gamma\times N_\delta\subseteq C_\Phi$ then in particular
  $X\oplus\gamma\cat 0^{\omega}\oplus\delta\cat 0^{\omega}\in
  \operatorname{dom}(\Phi)$, and $B\le_{\mathfrak{F}}X$ since
  $X\mapsto X\oplus\gamma\cat 0^{\omega}\oplus\delta\cat 0^{\omega}$ is a
  tt-functional. Proposition~\ref{prop:half} is unchanged.

  In Lemma~\ref{lem:meet}, with the above reading of equations, the set of
  $(X,G_1,G_3)$ with $\Phi(X\oplus G_1)=\Psi(X\oplus G_3)$ is closed by
  Lemma~\ref{lem:adm}\eqref{adm:closed} (it is the set of triples such that
  some $A$ has $(X\oplus G_1,A)$ in the graph of $\Phi$ and $(X\oplus G_3,A)$
  in the graph of $\Psi$, a projection of a closed subset of a product with
  the compact space $\Cantor$). So its complement is open, each
  $L_{\gamma,\delta}$ is closed, each $C_{\Phi,\Psi}$ is closed, and the proof
  is unchanged; in part \eqref{mt:meet}, $\Psi(X\oplus\delta\cat 0^{\omega})$
  is defined and equal to $B$. In Lemma~\ref{lem:graph}, the relation
  $\le_{\mathfrak{F}}$ is a countable union of closed sets by
  Lemma~\ref{lem:adm}\eqref{adm:closed}. The set $M$ is Borel as before: its
  last clause says that for all $\Phi,\Psi\in\mathfrak{F}$, if
  $\Phi(K)=\Psi(K')$ then $\Phi(K)\le_{\mathfrak{F}}A$, and for fixed $\Phi$,
  $\Psi$ the set of $(A,K,K')$ with $\Phi(K)=\Psi(K')$ is closed, while the set
  of $(A,K)$ with $K\in\operatorname{dom}(\Phi)$ and
  $\Phi(K)\le_{\mathfrak{F}}A$ is a countable union of closed sets. In
  Proposition~\ref{prop:inverseD} we let
  $B_\Phi=\{Z\in\operatorname{dom}(\Phi):(\Phi(Z),Z)\in R_\pi\}$, a Borel set.
  The set $D_1\subseteq B_\Phi$ is dense in $N_\sigma$ and contained in the
  closed set $\operatorname{dom}(\Phi)$, so
  $N_\sigma\subseteq\operatorname{dom}(\Phi)$ and $\Psi(Z)=\Phi(\sigma\cat Z)$
  is a tt-functional by Lemma~\ref{lem:adm}\eqref{adm:total}. The rest of
  Section~\ref{sec:general} is unchanged. This proves \eqref{at:main}.

  \medskip
  \noindent\emph{Section~\ref{sec:ad}.} Fix an enumeration of $\mathfrak{F}$.
  Lemma~\ref{lem:cu} does not involve the reducibility, and the proof of
  Theorem~\ref{thm:ad}, including the discussion of the use of choice, applies
  with the changes already described. This proves \eqref{at:ad}.
\end{proof}

\subsection{Weak truth-table reducibility}

Recall that $A$ is \emph{weak truth-table reducible} to $B$,
$A\le_{\mathrm{wtt}}B$, if there are an oracle Turing machine $\Phi_e$ and a
computable function $u\colon\omega\to\omega$ such that $\Phi_e^{B}=A$ and for
every $n$ the computation $\Phi_e^{B}(n)$ queries the oracle only at numbers
less than $u(n)$. For every $e$ and every total computable function $u$, let
$\Phi_{e,u}$ be the partial functional whose domain is the set of all
$Y\in\Cantor$ such that for every $n$,
\begin{quote}
  the computation $\Phi_e^{Y}(n)$ halts with output $0$ or $1$ and queries the
  oracle only at numbers less than $u(n)$,
\end{quote}
and let $\Phi_{e,u}(Y)=\Phi_e^{Y}$ on this domain. Let
$\mathfrak{F}_{\mathrm{wtt}}$ be the set of all these functionals. By
definition, $A\le_{\mathrm{wtt}}B$ if and only if
$A\le_{\mathfrak{F}_{\mathrm{wtt}}}B$.

\begin{proposition}\label{prop:wtt}
  $\mathfrak{F}_{\mathrm{wtt}}$ is an admissible class.
\end{proposition}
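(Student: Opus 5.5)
We verify (A0), (A1), (A2) in turn for $\mathfrak{F}_{\mathrm{wtt}}$. Countability is clear, since the functionals $\Phi_{e,u}$ are indexed by pairs $(e,u)$ with $u$ a total computable function.

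For (A0), the machine $\Phi_e$ itself works: on the domain it halts on every input. The point is that $\operatorname{dom}(\Phi_{e,u})$ is closed. For each $n$, the condition that $\Phi_e^{Y}(n)$ halts with output in $\{0,1\}$ and queries only numbers $<u(n)$ depends only on $Y\res u(n)$. Indeed, as long as all queries stay below $u(n)$, the computation is determined by $Y\res u(n)$. If some query reaches $\ge u(n)$, this is seen at a finite stage determined by $Y\res u(n)$. So the set of good $Y$ for this $n$ is a finite union of basic clopen sets $N_\zeta$ with $|\zeta|=u(n)$. The domain is the intersection over $n$ of these clopen sets, hence closed.

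For (A1), a tt-functional $\Phi$ is computed by a machine $\Phi_e$ halting on all oracles with $0/1$ outputs. By compactness (König's lemma, effectively), the use function $u(n)=\max$ over all oracles of the use of $\Phi_e^{Y}(n)$ is computable: search over $k$ for the first $k$ such that for every $\zeta$ of length $k$ the computation with oracle $\zeta$ halts using only queries $<k$. Then $\Phi=\Phi_{e,u}$ with full domain.

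For (A2), given $\Phi_{e,u}$ and $\Phi_{e',u'}$, we build machines with computable use bounds, which agree with the required combination on its domain. For composition $\Phi_{e,u}\circ\Phi_{e',u'}$, the machine runs $\Phi_e$ and answers each query $q<u(n)$ by running $\Phi_{e'}(q)$. On the domain the use is bounded by $\max_{q<u(n)}u'(q)$, which is computable. The resulting $\Phi_{e'',u''}$ has domain containing the composition's domain, and it agrees there; this is what "extended by" requires. Pairing uses the bound $u''(2n)=2u(n)$, $u''(2n+1)=2u'(n)$ (up to the join coding). Case distinction queries $Y(j)$ first and then runs the appropriate machine, with bound $\max(j+1,u(n),u'(n))$.

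Iteration is the one step needing care. For $\operatorname{It}_j(\Phi,\Psi)(Y)(m)=\Psi(\Phi^m(Y))(j)$, the machine computes $\Phi^m(Y)$ recursively, simulating query by query. The use bound is $u_0=u'(j)$ (the use of $\Psi$ at $j$) and $u_{t+1}=\max_{q<u_t}u(q)$, and on the domain the oracle is queried only below $u_m$. This is a computable function of $m$, since $u,u'$ are total computable. A danger is that off the domain the machine may diverge or query too high. This is harmless, since we only need an extension, and $\Phi_{e'',u''}$ automatically restricts to the set where the bounds hold. On $\operatorname{dom}(\operatorname{It}_j)$, every intermediate computation lies in the relevant domain, so it halts with the stated use. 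Hence $\operatorname{dom}(\operatorname{It}_j)\subseteq\operatorname{dom}(\Phi_{e'',u''})$ with equal values.

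The main obstacle is bookkeeping. In each case we must check that, on the domain of the combined partial functional, every sub-simulation halts within its use bound, so that the new machine's total use stays below the computed bound. The uniform recursion $u_t$ in the iteration case is the only place where nesting matters.
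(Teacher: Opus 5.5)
Your proof is correct and follows the paper's argument essentially step by step: the same clopen-set argument for (A0), the same compactness search for the use bound in (A1), and the same machines for the four operations in (A2), with your iterated bound $u_m$ playing the role of the paper's $v(m,u'(j))$. The differences are cosmetic. The paper first reduces to nondecreasing bounds (using that $\Phi_{e,u'}$ extends $\Phi_{e,u}$ when $u\le u'$), whereas you take explicit maxima such as $\max_{q<u(n)}u'(q)$. Your factor $2$ in the pairing bound is unnecessary, since the oracle is $Y$ and not a join, so $u(n)$ and $u'(n)$ suffice; it is harmless, though, because enlarging a bound only enlarges the domain.
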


\begin{proof}
  The class is countable. Note first that if $u\le u'$ pointwise then
  $\Phi_{e,u'}$ extends $\Phi_{e,u}$. Since the four operations in (A2) are
  monotone with respect to extension (if $\Phi'$ extends $\Phi$ and $\Psi'$
  extends $\Psi$ then $\Phi'\circ\Psi'$ extends $\Phi\circ\Psi$, and so on),
  in verifying (A2) we may replace each bound $u$ by the nondecreasing bound
  $n\mapsto\max_{k\le n}u(k)$, that is, we may assume that all bounds are
  nondecreasing.

  (A0) For fixed $n$, whether $\Phi_e^{Y}(n)$ halts with output $0$ or $1$
  while querying only numbers less than $u(n)$ depends only on $Y\res u(n)$. So the set of such
  $Y$ is clopen, and $\operatorname{dom}(\Phi_{e,u})$, the intersection of
  these sets over all $n$, is closed. The machine $\Phi_e$ computes
  $\Phi_{e,u}$ on this domain.

  (A1) Let $\Phi$ be a tt-functional, computed by the machine $\Phi_e$, which
  halts on every input with every oracle. For each $n$, by compactness of
  $\Cantor$ there is $k$ such that for every string $\sigma$ of length $k$ the
  computation $\Phi_e^{\sigma}(n)$ halts having queried only numbers less than
  $k$; let $u(n)$ be the least such $k$, which can be found effectively. Then
  $u$ is computable and $\Phi=\Phi_{e,u}$.

  (A2) Let $\Phi=\Phi_{e,u}$ and $\Psi=\Phi_{e',u'}$ with $u,u'$ nondecreasing.

  \emph{Composition.} Consider the machine which, with oracle $Y$ and input
  $n$, first computes $\Phi_{e'}^{Y}(k)$ for all $k<u(n)$, obtaining a string
  $\rho$ of length $u(n)$, and then runs $\Phi_e^{\rho}(n)$, diverging if a
  number $\ge u(n)$ is queried. If $Y\in\operatorname{dom}(\Phi\circ\Psi)$
  then this halts with output $\Phi(\Psi(Y))(n)$, and it queries $Y$ only at
  numbers less than $u'(u(n))$.
  So the functional in $\mathfrak{F}_{\mathrm{wtt}}$ given by this machine and
  the bound $n\mapsto u'(u(n))$ extends $\Phi\circ\Psi$.

  \emph{Pairing.} The machine which computes $\Phi_e^{Y}(n)$ on input $2n$ and
  $\Phi_{e'}^{Y}(n)$ on input $2n+1$, with the bound $2n\mapsto u(n)$,
  $2n+1\mapsto u'(n)$, gives $\langle\Phi,\Psi\rangle$.

  \emph{Case distinction.} The machine which on input $n$ queries $Y(j)$ and
  then runs $\Phi_e^{Y}(n)$ or $\Phi_{e'}^{Y}(n)$ accordingly, with the bound
  $n\mapsto\max(u(n),u'(n),j+1)$, gives a functional extending
  $[\Phi,\Psi]_j$.

  \emph{Iteration.} Define $v(m,\ell)$ by $v(0,\ell)=\ell$ and
  $v(m+1,\ell)=v(m,u(\ell))$. By induction on $m$, if
  $Y\in\operatorname{dom}(\Phi^{m})$ then the first $\ell$ bits of
  $\Phi^{m}(Y)$ can be computed, uniformly in $m$ and $\ell$, by iterating the
  machine $\Phi_e$, querying $Y$ only at numbers less than $v(m,\ell)$: for
  $m+1$, compute the first $u(\ell)$ bits of $\Phi^{m}(Y)$ and run $\Phi_e$ on
  the resulting string for the inputs less than $\ell$. Now consider the
  machine which, with oracle $Y$ and input $m$, computes in this way the first
  $u'(j)$ bits of $\Phi^{m}(Y)$ and then runs $\Phi_{e'}$ on the resulting
  string with input $j$. For $Y\in\operatorname{dom}(\operatorname{It}_j(\Phi,
  \Psi))$ it halts with output $\Psi(\Phi^{m}(Y))(j)$, and it queries $Y$ only
  at numbers less than $v(m,u'(j))$, a computable function of $m$. So it gives
  an element of $\mathfrak{F}_{\mathrm{wtt}}$ extending
  $\operatorname{It}_j(\Phi,\Psi)$.
\end{proof}

\begin{corollary}\label{cor:wtt}
  All four statements of Theorem~\ref{thm:adm} hold for the wtt-degrees. In
  particular, an automorphism of the wtt-degrees which is induced by a Baire
  measurable function is the identity, and in $\mathrm{ZF}+\mathrm{AD}$ the
  wtt-degrees are rigid.
\end{corollary}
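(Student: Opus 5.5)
The corollary should follow by combining Proposition~\ref{prop:wtt} with Theorem~\ref{thm:adm}, applied to the admissible class $\mathfrak{F}=\mathfrak{F}_{\mathrm{wtt}}$. By the remark preceding Proposition~\ref{prop:wtt}, $A\le_{\mathrm{wtt}}B$ holds exactly when $A\le_{\mathfrak{F}_{\mathrm{wtt}}}B$. So the partial order $\mathcal{D}_{\mathfrak{F}_{\mathrm{wtt}}}$ is literally the partial order of wtt-degrees, and automorphisms and inducing functions mean the same thing on both sides. Proposition~\ref{prop:wtt} then supplies admissibility, and all four items of Theorem~\ref{thm:adm} transfer verbatim to the wtt-degrees.

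For the two highlighted consequences, I would proceed as follows.
\begin{enumerate}
\item \emph{Baire measurable functions.} Item~\eqref{at:main} of Theorem~\ref{thm:adm} already contains the clause about Baire measurable functions. Its proof repeats the proof of Corollary~\ref{cor:bm}: such an $F$ is continuous on a dense $G_\delta$ set $D$ \cite[8.38]{Kec}, and $F\res D$ induces $\pi$ on $D$. Nothing in this argument depends on the reducibility.
\item \emph{Rigidity under $\mathrm{AD}$.} Item~\eqref{at:ad} gives this directly. The only extra point is that $\mathfrak{F}_{\mathrm{wtt}}$ is countable with an explicit enumeration, via pairs $(e,u)$ with $u$ ranging over indices of total computable functions. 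Hence the choice-free bookkeeping in the proof of Theorem~\ref{thm:ad} still applies.
\end{enumerate}

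The only real work is hidden in Proposition~\ref{prop:wtt} and in the translation carried out in the proof of Theorem~\ref{thm:adm}; both are taken as given here. The one step I would check again is the identification of $\le_{\mathrm{wtt}}$ with $\le_{\mathfrak{F}_{\mathrm{wtt}}}$. This requires that a wtt-reduction $\Phi_e^B=A$ with use bound $u$ place $B$ in $\operatorname{dom}(\Phi_{e,u})$. That holds because every computation $\Phi_e^B(n)$ halts with a bit output and respects the bound $u(n)$. Conversely, any $\Phi_{e,u}(B)=A$ is by definition a wtt-reduction. With that identification in place, the corollary is immediate.
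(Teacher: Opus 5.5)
Your proposal is correct and matches the paper. The paper states the corollary as an immediate consequence of Proposition~\ref{prop:wtt} and Theorem~\ref{thm:adm}, using that $\le_{\mathrm{wtt}}$ coincides by definition with $\le_{\mathfrak{F}_{\mathrm{wtt}}}$. Your extra checks, namely the identification of the two reducibilities and the explicit enumeration of $\mathfrak{F}_{\mathrm{wtt}}$ needed for the $\mathrm{AD}$ case, are exactly the points the paper treats as routine.
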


\subsection{Limits of the method}

\begin{remark}\label{rem:limits}
  (1) \emph{Turing reducibility.} The class of all Turing functionals,
  each restricted to the set of oracles on which it is total, satisfies (A1)
  and (A2), but not (A0): the domains are $G_\delta$ sets which need not be
  closed, and the graphs are $G_\delta$ rather than closed. In fact no
  admissible class induces Turing reducibility. If $\Phi$ belongs to an
  admissible class, then by (A0) and compactness of $\operatorname{dom}(\Phi)$
  the number of oracle bits used to compute $\Phi(Y)(n)$ is bounded, for
  $Y\in\operatorname{dom}(\Phi)$, by some $g_\Phi(n)$; so an admissible
  reducibility is always of ``weak truth-table type'', with use bounds from a
  countable family of functions which need not be computable. By a standard
  diagonalization, for every countable family of functions there are reals
  $A\le_{\mathrm{T}}B$ such that no Turing reduction of $A$ to $B$ has use
  bounded by a member of the family.

  So Theorem~\ref{thm:adm} does not apply to Turing reducibility, and in
  particular the elementary argument of Section~\ref{sec:cts} is not available
  for it. We expect that the arguments of Section~\ref{sec:general}, which take
  place on comeager sets throughout, can be adapted, with ``comeager in an
  open set'' in place of ``open'' in Lemma~\ref{lem:unif}; this would give a
  category version of \cite[Lemma~0.2]{Mar}. We have not carried this out,
  since the Turing degrees are rigid by \cite{OAI}.

  (2) \emph{Reducibilities below $\ltt$.} For bounded truth-table and many-one
  reducibility the argument does not start: the $n$th bit of $G+H$ depends on
  $n+1$ bits of each of $G$ and $H$, so the reductions \eqref{eq:args} are not
  given by bounded truth-table reductions uniformly on any open set of pairs.
  For bounded truth-table reducibility closure under iteration fails as well
  (iterating $A\mapsto(A(n)+A(n+1)\bmod 2)_{n\in\omega}$ produces bits that
  depend on unboundedly many bits of $A$), and for many-one reducibility
  case distinction fails. This is as it should be, since the many-one degrees
  have many automorphisms.

  (3) \emph{Reducibilities above $\le_{\mathrm{T}}$.} By
  Lemma~\ref{lem:adm}\eqref{adm:order}, a reducibility that is not contained in
  $\le_{\mathrm{T}}$ is not given by an admissible class; for arithmetic
  reducibility, where $A$ is reducible to $B$ if $A$ is computable from some
  finite jump of $B$, already (A0) fails, since the reductions are not
  continuous. There is also a second obstacle, which would remain after an
  adaptation as in (1): closure under iteration fails. The $m$-fold iterate of
  a reduction which uses a fixed positive number of jumps uses a number of
  jumps that grows with $m$, so the decoding procedure of
  Proposition~\ref{prop:coding} is not an arithmetic reduction.
\end{remark}

\section{Concluding remarks}

\subsection*{Comparison with the Turing case}
For Turing reducibility the analogue of the set $C_\Phi$ in
Lemma~\ref{lem:unif} is the set of $x$ such that a partial functional converges
to the right value on $\psi(x)$, which is $G_\delta$ rather than closed; this
is why the argument in \cite{Mar} needs sufficiently generic pairs $(G,H)$ even
when $F$ is continuous, and why it is natural there to phrase things in terms
of forcing. For truth-table reducibility the sets are closed, and this is what
makes Theorem~\ref{thm:cts} elementary. It is also what makes
Propositions~\ref{prop:inverse} and~\ref{prop:inverseD} possible: once
$\pi^{-1}(\mathbf{z})\le\mathbf{z}$ is known for all $\mathbf{z}$, a single
tt-functional computes a representative of $\pi^{-1}(\dg(Z))$ from $Z$ for all
$Z$ in a large set, and so the inverse automorphism is again induced by a
continuous function on a comeager set.

\subsection*{The representation problem}
By Corollary~\ref{cor:bm}, a nontrivial automorphism of $\Dtt$, if there is
one, cannot be induced by a Baire measurable function; and by
Theorem~\ref{thm:main} the rigidity of $\Dtt$ is equivalent to a positive
answer to Question~\ref{q:rep}. For the Turing degrees, Slaman and Woodin
\cite{SW} proved that every automorphism is induced by an arithmetic function.
We do not know whether their analysis can be carried out for the truth-table
degrees. What is known in this direction is Anderson's theorem \cite{And} that
every automorphism of $\Dtt$ is the identity on a cone.

\subsection*{Acknowledgments}
This article was produced with assistance from Claude, an AI model made by Anthropic.
The proof of the main results was formalized in the proof assistant Lean 4 using Aristotle (Harmonic), see \url{https://github.com/bjoernkjoshanssen/tt-auto}.
The author served as proof engineer, and thanks Andrew Marks for graciously sharing the note
\cite{Mar}.

\end{document}